\newtheorem{thm}{Theorem}[section] 
\newtheorem{lem}{Lemma}[section] 
\newtheorem{cor}[lem]{Corollary} 
\theoremstyle{definition} 
\theoremstyle{remark} 
\numberwithin{equation}{section} 
\newcommand{\E}{\mathbf{E}\,} 
\newcommand{\Tr}{\mathrm{Tr}\;\!} 
\newcommand{\im}{\mathrm{Im}\;\!}
\newenvironment{Proof of}{\removelastskip\par\medskip 
\noindent{\em Proof of} \rm}{\penalty-20\null\hfill$\square$\par\medbreak} 
\begin{document} 
\setcounter{page}{1} 
 
\title{\bf On the asymptotic distribution of the singular values of  powers of
random matrices  }

\author{{\bf N. Alexeev}\\{\small S.-Peterburg state University}\\{\small S.-Peterburg, Russia}\and{\bf F. G\"otze}\\{\small Faculty of Mathematics} 
\\{\small University of Bielefeld}\\{\small Germany} 
\and {\bf A. Tikhomirov}$^{1}$\\{\small Department of Mathematics 
}\\{\small Komi Research Center of Ural Branch of RAS} 
\\{\small Syktyvkar, Russia}} 
\maketitle 
 \footnote{$^1$Partially supported by RFBF 
grant N 09-01-12180. Partially supported RFBR--DFG, grant  N 09-01-91331. 
Partially supported by CRC 701 ``Spectral Structures and Topological 
Methods in Mathematics'', Bielefeld} 
 
 

\begin{abstract}We consider powers of  random  matrices  with independent entries.  Let $X_{ij},{  } i,j\ge 1$, be independent complex random variables 
with $\E X_{ij}=0$ and $\E |X_{ij}|^2=1$ and let
$\mathbf X$ denote  an $n\times n$  matrix with $[\mathbf X]_{ij}=X_{ij}$, for $1\le i, j\le n$. 
Denote by $s_1^{(m)}\ge\ldots\ge s_n^{(m)}$ the singular values  of 
the random  matrix 
$\mathbf W:={n^{-\frac m2}} \mathbf X^m$ 
and define  the empirical distribution of the squared singular values by 
$$ 
\mathcal F_n^{(m)}(x)=\frac1n\sum_{k=1}^nI_{\{{s_k^{(m)}}^2\le x\}}, 
$$ 
where $I_{\{B\}}$ denotes the indicator of an event $B$. 
We prove that under a Lindeberg condition for the  fourth moment that 
 the expected spectral distribution $F_n^{(m)}(x)=\E \mathcal F_n^{(m)}(x)$ converges to 
the  distribution function  $G^{(m)}(x)$ defined by its moments 
$$ 
\alpha_k(m):=\int_{\mathbb R}x^k\,d\,G(x)=\frac {1}{mk+1}\binom{km+k}{ k}. 
$$

\end{abstract} 
 
\maketitle 
\markboth{N. V. Alexeev, F. G\"otze, A.Tikhomirov}{Powers of random matrices} 
 
\section{Introduction} 
Let $X_{ij},{  } i,j\ge 1$, be independent complex random variables 
with $\E X_{ij}=0$ and $\E |X_{ij}|^2=1$ and 
$\mathbf X$ is an $n\times n$  matrix with $[\mathbf X]_{ij}=X_{ij}$, for $1\le i, j\le n$. 
Denote by $s_1^{(m)}\ge\ldots\ge s_n^{(m)}$ the singular values  of  the random  matrix 
$\mathbf W:={n^{-\frac m2}} \mathbf X^m$ 
and define the  empirical distribution of its squared singular values by 
$$ 
\mathcal F_n^{(m)}(x)=\frac1n\sum_{k=1}^n\mathbb I{\{{s_k^{(m)}}^2\le x\}}, 
$$ 
where $\mathbb I{\{B\}}$ denotes the indicator of an event $B$. 
We shall investigate the 
convergence of the expected spectral distribution $F_n^{(m)}(x)=\E \mathcal F_n^{(m)}(x)$ 
 to the  distribution function  $G^{(m)}(x)$ defined by its moments 
$$ 
\alpha_k(m):=\int_{\mathbb R}x^k\,d\,G(x)=\frac {1}{mk+1}\binom{km+k}{ k}. 
$$ 
The sequence ${{\alpha}_k(m)}$ consists of  the so-called  Fuss--Catalan 
Numbers. This sequence defines a distribution with Stieltjes transform $s^{(m)}(z)$ satisfying the equation (\ref{very}) below. 
We consider the  Kolmogorov distance between the distributions 
$F_n^{(m)}(x)$ and $G^{(m)}(x)$, that is 
$$ 
\Delta_n^{(m)}:=\sup_x| F_n^{(m)}(x)-G^{(m)}(x)|. 
$$ 
The main result of this paper is the following 
\begin{thm}\label{main}Let $\E X_{jk}=0$, $\E|X_{jk}|^2=1$, $\E |X_{jk}|^{4}\le M<\infty$. 
Assume that for any $\tau>0$ 
\begin{equation} 
 L_n(\tau):=\frac1{n^2}\sum_{j,k=1}^n\E|X_{jk}|^4I\{|X_{jk}|>\tau\sqrt n\}\to 0\quad\text{as}\quad n\to\infty, 
\end{equation} 
where $I\{E\}$ denotes indicator of an event $E$. 
Then, for any fixed $m\ge 2$, 
\begin{equation}\notag 
\lim_{n\to \infty}\sup_x| F_n^{(m)}(x)-G^{(m)}(x)| = 0. 
\end{equation} 
\end{thm}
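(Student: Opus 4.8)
The strategy is the classical two-step program of random matrix theory: (i) prove convergence of the expected spectral distribution for matrices with truncated/bounded entries via the moment method, and (ii) remove the truncation by a Lindeberg-type replacement argument, so that only the fourth-moment Lindeberg condition $L_n(\tau)\to 0$ is needed.

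\medskip

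\emph{Step 1: Reduction to bounded entries.}
First I would truncate: fix $\tau>0$ and set $\widehat X_{jk}=X_{jk}\mathbb I\{|X_{jk}|\le \tau\sqrt n\}$, recentered and rescaled to have mean $0$ and variance $1$. Let $\widehat{\mathbf W}$ be the corresponding power matrix. A rank/perturbation inequality for the Kolmogorov distance between spectral distributions of $\mathbf W^*\mathbf W$ and $\widehat{\mathbf W}^*\widehat{\mathbf W}$ — using that $\mathbf X-\widehat{\mathbf X}$ has few nonzero entries on average because of the Lindeberg condition, together with the telescoping identity $\mathbf X^m-\widehat{\mathbf X}^m=\sum_{r}\widehat{\mathbf X}^{r}(\mathbf X-\widehat{\mathbf X})\mathbf X^{m-1-r}$ and a bound on operator norms of the factors — shows it suffices to prove the theorem for entries bounded by $\tau\sqrt n$ with $\tau$ arbitrarily small. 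This is a routine but somewhat delicate estimate.

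\medskip

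\emph{Step 2: Moment method for truncated matrices.}
For the bounded model, I would show $\frac1n\E\,\Tr(\mathbf W^*\mathbf W)^k \to \alpha_k(m)$ for every fixed $k$. Writing $\mathbf W^*\mathbf W = n^{-m}(\mathbf X^*)^m\mathbf X^m$, the trace expands as a sum over closed paths of length $2mk$ in the complete bipartite-type index graph, with the $2mk$ indices constrained by the alternating structure of $\mathbf X$ and $\mathbf X^*$. The key combinatorial lemma: by the usual argument (each edge must be traversed at least twice for the expectation to survive, and the bounded-fourth-moment/Lindeberg condition kills the contribution of vertices of "excess" multiplicity after normalization by $n^{-m}\cdot n^{mk+1}$), only the \emph{tree-like} paths contribute in the limit, and counting these closed tree paths of the appropriate type gives exactly the Fuss–Catalan number $\frac{1}{mk+1}\binom{mk+k}{k}$. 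One then needs a Carleman-type argument: the numbers $\alpha_k(m)$ grow slowly enough (they are bounded by $C^k$ for some constant, say by comparison with $\binom{(m+1)k}{k}\le (m+1)^{(m+1)k}$) to uniquely determine $G^{(m)}$, and the moments of $F_n^{(m)}$ converge to them, so $F_n^{(m)}\Rightarrow G^{(m)}$; since $G^{(m)}$ is continuous (it has a density on a compact interval, being characterized by the Stieltjes-transform equation~(\ref{very})), weak convergence upgrades to convergence in Kolmogorov distance, i.e. $\Delta_n^{(m)}\to0$.

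\medskip

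\emph{Main obstacle.}
The hard part is the combinatorial bookkeeping in Step 2: unlike the $m=1$ case (Marchenko–Pastur, Catalan numbers from Dyck paths), here each "step" of the walk is itself a product of $m$ matrix factors, so the closed paths live on a more intricate structure and one must carefully identify which equivalence classes of index-identifications survive the $n\to\infty$ limit and verify the surviving count is the Fuss–Catalan number. Controlling the non-tree terms requires using the fourth moment bound (for vertices of multiplicity contributing the borderline power of $n$) together with the Lindeberg condition $L_n(\tau)\to0$ (to handle the marginal cases), rather than higher-moment assumptions; getting the power counting exactly right so that these error terms vanish is the technical crux.
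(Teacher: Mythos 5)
Your plan is sound, but it follows a genuinely different route from this paper: you propose the moment method, whereas the paper (as it states explicitly, the moment-method proof having appeared in \cite{AGT}) proves the theorem via Stieltjes transforms. Concretely, after the same truncation/recentering reduction, the paper symmetrizes the squared singular values through the block matrix $\mathbf V=\mathbf H^m\mathbf J$, and then shows by resolvent expansions, Stein/Taylor-type derivative identities for the entries, and martingale variance bounds that $s_n(z)$ satisfies $1+zs_n(z)+(-1)^{m+1}z^{m-1}s_n^{m+1}(z)=\delta_n(z)$ with $\delta_n(z)\to0$ for $\im z$ large; comparing with the exact equation (\ref{very1}) for $s(z)$ gives $s_n(z)\to s(z)$ on a set with interior, Montel's theorem extends this to all compacts in $\mathbb{C}^+$, and weak convergence plus continuity of $G^{(m)}$ yields $\Delta_n^{(m)}\to0$. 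Your route replaces the entire analytic core by the combinatorial identification of tree-like closed paths with Fuss--Catalan counts plus a Carleman argument; that is exactly the technical crux you flag, and it is the content of the earlier paper rather than of this one. What each approach buys: the moment method is more elementary and self-contained once the path-counting is done, while the Stieltjes-transform argument localizes all randomness in a few variance lemmas and, as the authors note, produces an explicit bound ($|s_n(z)-s(z)|\le C\tau_n v^{-4}$ for large $v$) that can be converted into a rate of convergence, which the moment method does not give. Two small cautions on your Step 1: the telescoping bound for $\mathbf X^m-\widehat{\mathbf X}^m$ should be run as a rank inequality (each summand has rank at most $\operatorname{rank}(\mathbf X-\widehat{\mathbf X})$), since operator norms of the untruncated factors are not controlled under your hypotheses; and the recentering $\widehat{\mathbf X}=\widetilde{\mathbf X}-\E\widetilde{\mathbf X}$ is a full-rank perturbation, so it must be handled by a Hilbert--Schmidt norm estimate as in the paper, not by rank.
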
 
\begin{cor}\label{main1} 
 Let $X_{jk}$ are independent identically distributed complex random variables. Let 
\begin{equation}\label{AGT} 
 \E X_{jk}=0,\quad\E|X_{jk}|^2=1,\quad \E |X_{jk}|^{4}= M<\infty 
\end{equation} 
Then, for any fixed $m\ge 2$, 
\begin{equation}\notag 
\lim_{n\to \infty} \Delta_n^{(m)}= 0. 
\end{equation} 
\end{cor}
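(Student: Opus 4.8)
The plan is to prove Theorem~\ref{main} by the moment method, and then derive Corollary~\ref{main1} as a special case together with a standard smoothness/continuity argument that upgrades moment convergence to convergence in Kolmogorov distance. For Theorem~\ref{main} one works with the matrix $\mathbf V:=\mathbf W\mathbf W^*=n^{-m}\mathbf X^m(\mathbf X^*)^m$, whose eigenvalues are exactly the squared singular values ${s_k^{(m)}}^2$. Thus $\int x^k\,dF_n^{(m)}(x)=\tfrac1n\E\Tr\mathbf V^k=n^{-mk-1}\E\Tr\big(\mathbf X^m(\mathbf X^*)^m\big)^k$. The first step is to show that for every fixed $k$ and $m$ one has $\tfrac1n\E\Tr\mathbf V^k\to\alpha_k(m)$ as $n\to\infty$. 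Expanding the trace gives a sum over closed paths of length $2mk$ alternating between factors of $\mathbf X$ and $\mathbf X^*$; using $\E X_{ij}=0$, $\E|X_{ij}|^2=1$ one identifies the leading-order contribution as coming from those pairings of the $2mk$ edges that form a ``planar'' (non-crossing) tree-like structure, and a combinatorial count shows the number of such contributing paths, after normalization by $n^{mk+1}$, converges to the Fuss--Catalan number $\tfrac1{mk+1}\binom{mk+k}{k}$. The fourth-moment bound $\E|X_{ij}|^4\le M$ together with the Lindeberg condition $L_n(\tau)\to0$ is exactly what is needed to control the error terms: a standard truncation at level $\tau\sqrt n$ (replacing $X_{ij}$ by $\hat X_{ij}=X_{ij}I\{|X_{ij}|\le\tau\sqrt n\}$, recentering and rescaling) changes $\tfrac1n\E\Tr\mathbf V^k$ by an amount that is negligible by the Lindeberg hypothesis, and for the truncated matrix the higher-moment contributions are dominated by the non-crossing ones.

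\textbf{From moments to the limiting law.}
Next one must check that the numbers $\alpha_k(m)$ are the moments of a genuine probability distribution $G^{(m)}$ and that this distribution is uniquely determined by its moments (Carleman's condition). The Fuss--Catalan numbers grow like $C^k$ for a constant $C=C(m)$ (indeed $\binom{mk+k}{k}\le (m+1)^{(m+1)k}$), so $\alpha_k(m)^{1/2k}$ grows only polynomially-free, i.e.\ $\sum_k\alpha_{2k}(m)^{-1/2k}=\infty$, which gives determinacy. Existence of $G^{(m)}$ can be obtained from the fact, alluded to in the text, that its Stieltjes transform $s^{(m)}(z)$ satisfies the algebraic equation~(\ref{very}); alternatively one can exhibit $G^{(m)}$ explicitly as the distribution of the product of $m$ independent ``quarter-circle''-type variables, i.e.\ the $m$-fold free multiplicative convolution of Marchenko--Pastur laws, but for the proof all that is needed is Carleman's bound. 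Then the standard moment-convergence theorem yields $F_n^{(m)}(x)\to G^{(m)}(x)$ at every continuity point of $G^{(m)}$; since $G^{(m)}$ has a continuous distribution function (it is absolutely continuous, or at least atomless, on $(0,\infty)$), this pointwise convergence is automatically uniform by Pólya's theorem, giving $\sup_x|F_n^{(m)}(x)-G^{(m)}(x)|\to0$.

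\textbf{The corollary and the main obstacle.}
Corollary~\ref{main1} is then immediate: if the $X_{jk}$ are i.i.d.\ with $\E|X_{jk}|^4=M<\infty$, the Lindeberg ratio $L_n(\tau)=\E|X_{11}|^4I\{|X_{11}|>\tau\sqrt n\}\to0$ by dominated convergence for every $\tau>0$, so the hypotheses of Theorem~\ref{main} hold and $\Delta_n^{(m)}=\sup_x|F_n^{(m)}(x)-G^{(m)}(x)|\to0$. I expect the main obstacle to be the combinatorial heart of the first step: correctly setting up the bookkeeping for closed paths of length $2mk$ in the product $\mathbf X^m(\mathbf X^*)^m$ raised to the $k$-th power, showing that only the non-crossing pairings survive in the limit, and verifying that the surviving count is precisely the Fuss--Catalan number $\alpha_k(m)$ rather than some other Catalan-type quantity — the indexing is considerably more delicate here than in the $m=1$ (Marchenko--Pastur) case because each ``block'' of the path carries an internal structure of length $m$. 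A secondary technical point is making the truncation step uniform in the path length, which requires the moment growth estimates to be tracked with explicit constants.
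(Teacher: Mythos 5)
Your derivation of the corollary itself is exactly what is needed and matches the paper: in the i.i.d.\ case all $n^2$ summands in $L_n(\tau)$ coincide, so $L_n(\tau)=\E|X_{11}|^4I\{|X_{11}|>\tau\sqrt n\}\to 0$ by dominated convergence (using $\E|X_{11}|^4=M<\infty$), and the conclusion $\Delta_n^{(m)}\to 0$ is then just Theorem~\ref{main} applied verbatim; the paper treats this as immediate and gives no separate argument. Where you diverge is that you propose to re-prove Theorem~\ref{main} itself by the method of moments (path expansion of $\tfrac1n\E\Tr\mathbf V^k$, survival of non-crossing configurations, identification of the limit counts with the Fuss--Catalan numbers, then Carleman determinacy and P\'olya's theorem to upgrade to uniform convergence). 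That is a legitimate and essentially complete outline, but it is the route of the authors' earlier paper cited as \cite{AGT}, not of the present one: here the theorem is proved via Stieltjes transforms, by showing that $s_n^{(m)}(z)$ satisfies the approximate algebraic equation $1+zs_n^{(m)}(z)+(-1)^{m+1}z^m(s_n^{(m)}(z))^{m+1}=\delta_n(z)$ with $\delta_n\to0$, comparing with (\ref{very}), and invoking Montel's theorem. The trade-off is the one you would expect: your moment route demands the delicate combinatorial bookkeeping you flag as the main obstacle (and a moment-determinacy/continuity step to get the sup-norm statement), while the paper's resolvent route replaces that combinatorics with variance and derivative lemmas for resolvent functionals and, as a bonus, yields a quantitative bound of the form $|s_n(z)-s(z)|\le C\tau_n v^{-4}$, i.e.\ information on the rate of convergence that the pure moment argument does not provide.
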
 
 
Oravecz  in 2001, \cite{Oravecz}, studied the  so called $\mathcal
R$-elements introduced by Voiculescu and has shown that the $m$-th powers of
these elements have a distribution whose  moments are Fuss--Catalan numbers. 
These numbers satisfy the following simple recurrence relation 
\begin{equation}\label{recur} 
\alpha_k(m)=\sum_{k_0+\cdots+k_m=k-1}\prod_{\nu=0}^m\alpha_{k_{\nu}}(m). 
\end{equation} 
Denote by $s^{(m)}(z)$ the Stieltjes transform of the distribution with
moments $\alpha_k(m)$. Using equality (\ref{recur}), we may show that the Stieltjes transform 
$s^{(m)}(z)$ satisfies the equation 
\begin{equation}\label{very} 
1+zs^{(m)}(z)+(-1)^{m+1}z^m(s^{(m)}(z))^{m+1}=0. 
\end{equation} 
Distributions with such a Stieltjes transform belong to the class of the
so-called Free Bessel Laws which are described in Banica and others
\cite{capitaine:08}. This distribution has been studied  also in
\cite{Speicher}. Using Free probability theory it is possible to prove the
result of Theorem \ref{main} for random matrices with independent entries,
provided that   all moments of $\mathbf X$ are finite. 
See for instance, Mingo and Speicher  \cite{Speicher}, and T. Banica and others \cite{capitaine:08}. 
Theorem  \ref{main} was formulate in \cite{AGT:2010a}.
In \cite {AGT} we gave a proof of  Theorem \ref{main} by the  method of moments. 
Here we present a proof of  Theorem \ref{main} using Stieltjes transforms. 
This approach allows us to get some bound of the rate of convergence. 
Our proof of Theorem \ref{main} is based on the representation (\ref{very}). 
 We shall investigate the  Stieltjes transform $s_n^{(m)}(z)$ of the distribution function $F_n^{(m)}(x)$ and 
 we shall show that $s_n^{(m)}(z)$ satisfies an equation 
\begin{equation}\notag 
1+zs_n^{(m)}(z)+(-1)^{m+1}z^m(s_n^{(m)}(z))^{m+1}=\delta_n(z) 
\end{equation} 
with some function $\delta_n(z)\to 0$ as $n\to\infty$. 
From these two relations we get that 
$s_n^{(m)}(z)$ converges  to $s^{(m)}(z)$ uniformly on any compact set in the
upper half-plane $\mathcal K\subset \mathcal C^+$. The last claim is
equivalent to weak convergence of the distribution functions 
$F_n^{(m)}(x)$ to the distribution function $F^{(m)}(x)$. 
%

\section{Auxiliary results} 
In this Section we describe a 
 symmetrization of one-sided distributions and  a special 
representation of the symmetrizing distribution of squared singular 
values of random matrices. Furthermore,  we 
shall  modify the random matrix $X$ by truncation of its entries. 
By condition (\ref{AGT}),we get  $\tau^{-q}L_n(\tau)\to0)$, for any $\tau>0$
and that for
any  $q>0$ the  function $\tau^{-q}L_n(\tau)$ is not increasing in $\tau$. 
This implies that  we may choose a sequence of positive numbers $\tau_n>0$, $n=1,2\ldots$ such that 
\begin{equation} 
 \tau_n\to0, \quad\text{and}\quad L_n(\tau)\le\tau_n^6,\quad\text{as}\quad n\to\infty. 
\end{equation} 
\subsection{Truncation} 
We call the matrix  $\widetilde{\mathbf X}$  the truncation of 
  $\mathbf X$ if 
\begin{equation} 
\widetilde{X}_{ij} = 
\begin{cases} 
{X}_{ij},\text{ if } |X_{ij}|<\tau_n \sqrt{n}\\ 
0,\text{ otherwise} 
\end{cases}. 
\end{equation} 
Denote by $\widetilde s_1^{(m)}\ge\ldots\ge \widetilde s_n^{(m)}$ the singular values  of 
the random  matrix 
$ 
\widetilde{\mathbf W}:= n^{-\frac m2}{\widetilde{\mathbf X}}^m 
$ 
and define the empirical distribution of its squared singular values 
(eigenvalues of the matrix $\widetilde{\mathbf V}=\widetilde{\mathbf W}{\widetilde{\mathbf W}}^*$) by 
$ 
{\widetilde{\mathcal F}_n}^{(m)}(x)= 
\frac1n\sum_{k=1}^nI\{({\widetilde s}_k^{(m)})^2\le x\} 
$. 
Let $\widetilde F_n^{(m)}(x)=\E {\widetilde{\mathcal F}_n}^{(m)}(x)$. It is straightforward to check that 
\begin{align}\notag 
 \sup_x|{\widetilde F}_n^{(m)}(x)-F_n^{(m)}(x)|&\le \sum_{j,k=1}^n\Pr\{|X_{jk}|\ge cn^{\frac12}\}\notag\\&\le 
\frac 1{n^2\tau_n^4}\sum_{j,k=1}^n\E|X_{jk}|^{4}I\{|X_{jk}|>\tau_n \sqrt{n}\}=\frac{L_n(\tau_n)}{\tau_n^4}\le\tau_n^2. 
\end{align} 
Introduce the matrices $\widehat{\mathbf X}:=\widetilde{\mathbf X}-\E\widetilde{\mathbf X}$ 
and $\widehat{\mathbf W}={\widehat{\mathbf X}}^m$ and $\widehat{\mathbf
  V}=\widehat{\mathbf W}{\widehat{\mathbf W}}^*$. Let ${\widehat {\mathcal
    F}_n}^{(m)}$ by the   empirical distribution of its eigenvalues 
(squared singular values of $\widehat{\mathbf W}$) and ${\widehat F_n}^{(m)}= 
\E{\widehat {\mathcal F}_n}^{(m)}$. Let $\widehat s_n^{(m)}(z)$ denote the
Stieltjes transform of 
${\widehat F_n}^{(m)}$. 
Introduce the resolvent matrices 
\begin{equation} 
\widehat{\mathbf R}=(\widehat{\mathbf V}-z\mathbf I)^{-1},\qquad\text{and}\qquad 
{\widetilde{\mathbf R}}=(\widetilde{\mathbf V}-z\mathbf I)^{-1}. 
\end{equation} 
We have 
\begin{equation}\widetilde s_n^{(m)}(z)=\frac1n\E\Tr{\widetilde{\mathbf R}}\quad\text{and}\quad 
 \widehat s_n(z)=\frac1n\E\Tr{\widehat{\mathbf R}}. 
\end{equation} 
Applying the resolvent equality 
\begin{equation} 
 (\mathbf A+\mathbf B-z\mathbf I)^{-1}=(\mathbf A-z\mathbf I)^{-1}-(\mathbf A-z\mathbf I)^{-1} 
\mathbf B(\mathbf A+\mathbf B-z\mathbf I)^{-1}, 
\end{equation} 
 we get 
\begin{equation}\label{resolv} 
 |\widetilde s_n^{(m)}(z)-\widehat s_n^{(m)}(z)|\le \frac1{n}\E|\Tr {\widetilde{\mathbf R}}(\widetilde{\mathbf V}-{\widehat{\mathbf V}})\widehat{\mathbf 
 R}|. 
\end{equation} 
Using that $\Tr {\widetilde{\mathbf R}}(\widetilde{\mathbf V}-{\widehat{\mathbf V}})\widehat{\mathbf 
 R}=\Tr {(\widetilde{\mathbf V}-{\widehat{\mathbf V}})\widehat{\mathbf 
 R}\widetilde{\mathbf R}}$ and applying H\"older's inequality, we obtain 
\begin{equation}\label{fin100} 
 |{\widetilde s}_n^{(m)}(z)-\widehat s_n^{(m)}(z)|\le \frac1{\sqrt n v^2}\E^{\frac12}\|{\widehat{\mathbf W}}-{\widetilde{\mathbf W}}\|_2^2(\E^{\frac12}\|\widetilde{\mathbf W}\|^2+\E^{\frac12}\|\widehat{\mathbf W}\|^2). 
\end{equation} 
By definition of the  matrices $\widetilde {\mathbf W}$ and $\widehat{\mathbf W}$, we get 
\begin{equation} 
 \widetilde {\mathbf W}-\widehat{\mathbf W}=\sum_{\nu=0}^{m-1}{\widehat{\mathbf X}}^{\nu} 
(\widetilde {\mathbf X}-\widehat{\mathbf X}){\widehat{\mathbf X}}^{m-1-\nu}. 
\end{equation} 
This implies that 
\begin{equation} 
 \|\widehat{\mathbf W}-\widetilde{\mathbf W}\|_2^2\le m\sum_{\nu=0}^{m-1}\|\widetilde{\mathbf X}-\widehat{\mathbf X}\|_2^2\|{\widehat{\mathbf X}}^{\nu}{\widetilde{\mathbf X}}^{m-1-\nu}\|_2^2. 
\end{equation} 
Applying Lemma \ref{norm0}, we obtain 
\begin{equation}\label{fin10} 
 \E\|\widehat{\mathbf W}-\widetilde{\mathbf W}\|_2^2\le Cn\|\E\widetilde{\mathbf X}\|_2^2\le 
\frac {CM}{n\tau_n^6}L_n(\tau_n)\le Cn^{-1}. 
\end{equation} 
Inequalities (\ref{fin10}) and (\ref{fin100}) together imply 
\begin{equation} 
 |\widetilde s_n^{(m)}(z)-\widehat s_n^{(m)}(z)|\le \frac {C}{\sqrt nv^2}. 
\end{equation} 
 
Now we conclude that 
\begin{equation}\notag 
 \lim_{n\to\infty}\sup_x|F_n^{(m)}-G^{(m)}(x)|=\lim_{n\to\infty} \sup_x|\E \widehat F_n^{(m)}-G^{(m)}(x)|. 
\end{equation} 
In the what follows we may assume without lost of generality that 
\begin{equation}\label{conditions} 
 \E X_{jk}=0,\quad \E |X_{jk}|^2=1, \quad\text{and}\quad |X_{jk}|\le \tau_n\sqrt n, 
\end{equation} 
for some $\tau_n>0$ such that $\tau_n\to 0$, $L_n(\tau_n)\le \tau_n^6$  as $n\to\infty$. 
 
\subsection{Symmetrization} 
We shall use the following  ``symmetrization'' of one-sided distributions. Let $\xi^2$ 
 be a positive random variable with distribution function $F(x)$. Define 
$\widetilde \xi:=\varepsilon\xi$ where $\varepsilon$   a Rademacher random
variable with 
 $\Pr\{\varepsilon=\pm1\}=1/2$ which is independent of $\xi$ . Let $\widetilde F(x)$ denote 
the distribution function of $\widetilde \xi$. It satisfies the 
equation 
\begin{equation}\label{sym} 
\widetilde F(x)=1/2(1+\text{sgn} \{x\}\,F(x^2)), 
\end{equation} 
We shall apply this symmetrization to the distribution of the squared 
singular values of the matrix $\mathbf W$. Introduce the following matrices 
\begin{align}\notag 
\mathbf V=\left(\begin{matrix}{\mathbf W\quad \mathbf O}\\{\mathbf O\quad 
\mathbf W^*}\end{matrix}\right),\quad 
\mathbf J=\left(\begin{matrix}{\mathbf O\quad \mathbf I_{n}}\\{\mathbf I_{n}\quad 
\mathbf O}\end{matrix}\right), 
\quad\text{and}\quad 
\widehat {\mathbf V}=\mathbf V\mathbf J\notag 
\end{align} 
Here and in the what follows  $\mathbf A^*$ denotes the adjoined (transposed
and complex conjugate)  matrix $\mathbf A$ and $\mathbf I_k$ denotes the unit matrix of order $k$. 
Note that $\widehat{\mathbf V}$ is a Hermitian matrix. The eigenvalues of the matrix $\widehat{\mathbf V}$ are $-s_1,\ldots,-s_n,s_n,\ldots,s_1$. 
Note that the symmetrization of the distribution function $\mathcal 
F_n(x)$ is a function $\widetilde{\mathcal F}_n(x)$ which is the empirical  
distribution function of the eigenvalues of  the matrix $\widehat {\mathbf
  V}$. 
By (\ref{sym}), we have 
\begin{equation}\label{sym10} 
\Delta_n^{(m)}=\sup_x|\widetilde F_n^{(m)}(x)-\widetilde 
G^{(m)}(x)|, 
\end{equation} 
where $\widetilde F_n^{(m)}(x)=\E\widetilde{\mathcal F}_n(x)$ and 
$\widetilde G^{(m)}(x)$ denotes the  symmetrization of the distribution function 
$G^{(m)}(x)$. 
Let $s(z)$ denote the  Stieltjes transform of the random variable $\xi^2$ and
let $\widetilde s(z)$ denote the Stieltjes transform of $\widetilde \xi$. Then 
\begin{equation}\label{sym12} 
\widetilde s(z)=zs(z^2). 
\end{equation} 
Equations (\ref{very}) and (\ref{sym12}) together imply 
\begin{equation}\label{very1} 
 1+z\widetilde s^{(m)}(z)+(-1)^{m+1}z^{m-1}(\widetilde s^{(m)}(z))^{m+1}=0. 
\end{equation} 
 
In the what follows we shall consider the symmetrization of the distribution
$F_n^{(m)}(x)$ 
and the Stieltjes transform of ${\widetilde F}_n^{(m)}(x)$. 
We shall omit ``$\quad\widetilde{}\quad$'' in the notation of the distribution function 
${\widetilde F}_n^{(m)}(x)$ (${\widetilde G}^{(m)}(x)$) 
and the Stieltjes transform 
${\widetilde s}_n^{(m)}(za)$ (${\widetilde s}^{(m)}(z)$). By 
$C$ (with an index or without it) we shall denote generic 
absolute constants, 
whereas $C(\,\cdot\,,\,\cdot\,)$ will denote 
positive constants depending on arguments. For every matrix $\mathbf A$ by
$\|\mathbf A\|_2$ 
we shall denote the Hilbert--Schmidt norm of the matrix $\mathbf A$  and by
$\|\mathbf A\|$ we  shall denote the  operator norm of the matrix $\mathbf A$.

\section{  The proof of the main result for $m=2$} 
First, we prove Theorem \ref{main} for $m=2$. 
Introduce the matrices $\mathbf H$ and $\mathbf J$ by the equalities 
\begin{equation} 
\mathbf H=\left(\begin{matrix}{\mathbf X\quad \mathbf O}\\{\mathbf O\quad \mathbf X^*}\end{matrix}\right), 
\qquad 
\mathbf { J}:=\left(\begin{matrix}{\mathbf O\quad \mathbf I}\\{\mathbf I\quad \mathbf O}\end{matrix}\right). 
\end{equation} 
Let $\mathbf V:=\mathbf H^m\mathbf {{J}}$, and $\mathbf R(z)$ denote
the  resolvent matrix of  $\mathbf V$, 
\begin{equation}\notag 
\mathbf R(z):=(\mathbf V-z\mathbf I)^{-1}. 
\notag 
\end{equation} 
 
Furthermore, we note that the symmetrization of the distribution function
$G_2(x)$ 
has a Stieltjes transform $s(z)$ which satisfies the following equation 
\begin{equation}\label{main211} 
1+zs(z)-zs^{3}(z)=0. 
\end{equation} 
We shall prove that in the case $m=2$ the Stieltjes transform of the
expected spectral distribution function 
$s_n(z)=\int_{-\infty}^{\infty}\frac1{x-z} 
\text{d} F_n(x)$ satisfies the equation 
\begin{equation}\label{main3} 
1+zs(z)-zs^3(z)=\delta_n(z), 
\end{equation} 
where $\delta_n(z)$ denotes some function such that $\delta_n(z)\to 0$ as $n\to \infty$. 
In the what follows we shall denote by 
$\varepsilon_n(z)$ a generic error term 
such that $|\varepsilon_n(z)|\le C\tau_n^av^{-b}$ for some positive constants $C,a$, and $b$. 
 
We start from the obvious  equality 
\begin{equation}\label{main4} 
1+zs_n(z)=\frac1{2n}\E\Tr \mathbf V\mathbf R(z). 
\end{equation} 
Using the definition of the matrices $\mathbf V$, $\mathbf H$ and $\mathbf{ J}$, we get 
\begin{equation}\label{auxiliary0} 
1+zs_n(z)=\frac1{2n\sqrt n}\sum_{j,k=1}^n\E X_{jk}\left(\left[ 
\mathbf {H}\mathbf J\mathbf R\right]_{kj}+\left[\mathbf {H}\mathbf J\mathbf R\right]_{j+n,k+n}\right). 
\end{equation} 
By Lemma \ref{teilor} of the Appendix, we get 
\begin{equation}\notag 
1+zs_n(z))=\frac1{2n\sqrt n}\sum_{j,k=1}^n\E\left[\frac{\partial \mathbf {H}\mathbf J\mathbf R}{\partial X_{jk}}\right]_{kj}+ 
\frac1{2n\sqrt n}\sum_{j,k=1}^n\E\left[\frac{\partial \mathbf {H}\mathbf J\mathbf R}{\partial X_{jk}}\right]_{j+n,k+n}+\varepsilon_n(z). 
\end{equation} 
 
 
 
Let $\mathbf e_1,\ldots,\mathbf e_{2n}$ be an orthonormal basic of $\mathbb 
R^{2n}$. First we note that, for $1\le j,k\le n$, 
\begin{equation}\label{auxiliary2} 
\frac{\partial  \mathbf H}{\partial X_{jk}}=\frac1{\sqrt n}(\mathbf e_j\mathbf e_k^T+\mathbf e_{k+n}\mathbf e_{j+n}^T), 
\end{equation} 
and 
\begin{equation}\label{auxiliary3} 
\frac{\partial  (\mathbf H\mathbf{ J})}{\partial X_{jk}}=\frac1{\sqrt n}(\mathbf e_j\mathbf e_{k+n}^T+\mathbf e_{k+n}\mathbf e_{j}^T). 
\end{equation} 
Now we compute  the derivatives of the resolvent matrix as follows 
\begin{align}\label{auxiliary4} 
\frac{\partial \mathbf R}{\partial X_{jk}}= 
&-\frac1{\sqrt n }\mathbf R(\mathbf e_j\mathbf e_k^T+\mathbf e_{k+n}\mathbf e_{j+n}^T)\mathbf H\mathbf{ J}\mathbf R\notag\\ 
&-\frac1{\sqrt n} \mathbf R\mathbf{H}(\mathbf e_j\mathbf e_{k+n}^T+\mathbf e_{k+n}\mathbf e_{j}^T)\mathbf R. 
\end{align} 
and 
\begin{align}\label{auxiliary5} 
\frac{\partial (\mathbf H\mathbf {\mathbf J}\mathbf R)}{\partial X_{jk}}=\frac1{\sqrt 
n} (\mathbf e_j\mathbf e_{k+n}^T+\mathbf e_{k+n}\mathbf e_{j}^T)\mathbf R 
&-\frac1{\sqrt n }\mathbf H\mathbf J\mathbf R(\mathbf e_j\mathbf e_k^T+\mathbf e_{k+n}\mathbf e_{j+n}^T)\mathbf H\mathbf J\mathbf R\notag\\ 
&-\frac1{\sqrt n} \mathbf H\mathbf J\mathbf R\mathbf{H} 
(\mathbf e_j\mathbf e_{k+n}^T+\mathbf e_{k+n}\mathbf e_{j}^T)\mathbf R. 
\end{align}


The equalities (\ref{auxiliary0}) and (\ref{auxiliary5}) together imply 
\begin{equation}\label{auxiliary6} 
1+zs_n(z)=A_1+\cdots+A_6+\varepsilon_n(z), 
\end{equation} 
where 
\begin{align}\notag 
A_1&:=\frac1{2n^2}\E(\sum_{j=1}^n \mathbf R_{j,j+n}+\sum_{j=1}^n \mathbf R_{j+n,j}), 
\notag\\ 
A_2&:=-\frac1{2n^2}\E\sum_{j,k=1}^n ([\mathbf HJ\mathbf R]_{jk}^2 
+[\mathbf H\mathbf J\mathbf R]_{j+n,k+n}^2),\notag\\ 
A_3&:=-\frac1{n^2}\E\sum_{j=1}^n [\mathbf H\mathbf J\mathbf R]_{j,j+n} 
\sum_{k=1}^n [\mathbf H\mathbf J\mathbf R]_{k+n,k},\notag\\ 
A_4&:=-\frac1{2n^2}\E\sum_{j,k=1}^n ([\mathbf H\mathbf J\mathbf R\mathbf H]_{k,j+n}\mathbf R_{k+n,j} 
+[\mathbf H\mathbf J\mathbf R\mathbf H]_{j+n,k+n}\mathbf R_{j,k+n}),\notag\\ 
A_5&:=-\frac1{2n^2}\E\sum_{k=1}^n [\mathbf H\mathbf J\mathbf R\mathbf H]_{k,k+n} 
\sum_{j=1}^n\mathbf R_{jj},\notag\\ 
A_6&:=-\frac1{2n^2}\E\sum_{k=1}^n [\mathbf H\mathbf J\mathbf R\mathbf H]_{k+n,k} 
\sum_{j=1}^n\mathbf R_{j+n,j+n}.\notag 
\end{align} 
We prove that the first four summands are negligible and the main  
asymptotic terms are the last two summands. We now start  the investigation 
of these terms. 
 
 
\begin{lem}\label{lem1}Under conditions of Theorem \ref{main} we have 
\begin{align}\label{l1} 
|A_5&+(\frac1{2n}\sum_{k=1}^n \E[\mathbf H \mathbf J\mathbf R\mathbf H]_{k,k+n})(\frac1n\sum_{j=1}^n\E \mathbf R_{jj})|\le \frac C{nv^2},\notag\\ 
|A_6&+(\frac1{2n}\sum_{k=1}^n \E[\mathbf H\mathbf J 
\mathbf R\mathbf H]_{k+n,k})(\frac1n\sum_{j=1}^n\E \mathbf R_{j+n,j+n})|\le \frac C{nv^2}. 
\end{align} 
\end{lem}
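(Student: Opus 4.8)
\textbf{Proof proposal for Lemma \ref{lem1}.}

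The plan is to exploit the standard decoupling identity for a martingale-type difference: for a product of two linear statistics of the resolvent, the expectation of the product equals the product of the expectations up to a covariance correction term, and that correction is small because each factor is a normalized trace-like quantity that concentrates. Concretely, write $a_k:=[\mathbf H\mathbf J\mathbf R\mathbf H]_{k,k+n}$ and $b:=\frac1n\sum_{j=1}^n\mathbf R_{jj}$, so that $A_5=-\frac1{2n}\E\big(\frac1n\sum_k a_k\big)\,(nb)\cdot\frac1n = -\frac1{2n}\sum_k\E a_k b$; matching this against the claimed product $-\big(\frac1{2n}\sum_k\E a_k\big)\big(\E b\big)$ reduces the statement to bounding $\frac1{2n}\sum_k\big|\E a_k b-\E a_k\,\E b\big|=\frac1{2n}\sum_k|\mathrm{Cov}(a_k,b)|$. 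I would estimate each covariance by Cauchy--Schwarz, $|\mathrm{Cov}(a_k,b)|\le\mathrm{Var}^{1/2}(a_k)\,\mathrm{Var}^{1/2}(b)$, or more efficiently by $|\mathrm{Cov}(a_k,b)|\le\mathrm{Var}^{1/2}(b)\cdot\E^{1/2}|a_k|^2$ combined with summation over $k$.

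The two ingredients I would then establish are: (i) a variance bound $\mathrm{Var}(b)\le C/(n^2 v^2)$ (more precisely $C/(n^2v^{c})$ for a fixed power $c$) for the normalized trace $b=\frac1n\Tr\mathbf R$; and (ii) an a priori $L^2$ bound $\frac1n\sum_k\E|a_k|^2\le C/v^{c'}$ together with $\big|\frac1n\sum_k a_k\big|\le C/v$ deterministically, using $\|\mathbf R\|\le 1/v$ and $\|\mathbf H\mathbf J\mathbf R\mathbf H\|\le\|\mathbf H\|^2/v$ on the truncated matrix where $\|\mathbf H\|$ is controlled by Lemma \ref{norm0}-type estimates. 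For (i), the standard route is the Azuma--Hoeffding/martingale decomposition with respect to the filtration generated by the columns (or rows) of $\widehat{\mathbf X}$: replacing one column changes $\Tr\mathbf R$ by $O(1/v)$ by the resolvent identity and the rank-one (here low-rank, since $\mathbf H$ has block structure) perturbation bound, so summing $n$ such squared increments divided by $n^2$ gives the $1/(n^2v^2)$ order; the fourth-moment bound $M$ enters only to keep the column-replacement increments integrable. Putting (i) and (ii) together, $\frac1{2n}\sum_k|\mathrm{Cov}(a_k,b)|\le\mathrm{Var}^{1/2}(b)\cdot\frac1{2n}\sum_k\E^{1/2}|a_k|^2\le \frac{C}{nv}\cdot\frac{C}{v^{c''}}$, which after absorbing powers of $v$ into the generic constant (and noting the lemma as stated only tracks the power $v^{-2}$) yields the asserted $C/(nv^2)$ bound, possibly after first verifying $a_k$ and $b$ have the same order of magnitude so that the cross term dominates correctly.

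The main obstacle I anticipate is not the covariance estimate itself but controlling the operator norm $\|\mathbf H\|$ (equivalently $\|\widehat{\mathbf X}\|$) with high enough probability that the deterministic bounds in (ii) and the increment bounds in the martingale argument hold on an event of probability $1-o((nv^2)^{-1})$; on the complementary event one uses the trivial bound $|a_k|\le\|\mathbf H\|^2/v$ with $\|\mathbf H\|\le\|\mathbf X\|\le n$ (crudely) times the small probability. Under the truncation $|X_{jk}|\le\tau_n\sqrt n$ and the fourth-moment hypothesis, Lemma \ref{norm0} should give $\E\|\mathbf H\|^{2p}\le C^p$ for the relevant $p$, which suffices. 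The second, more bookkeeping-type obstacle is that $\mathbf H\mathbf J\mathbf R\mathbf H$ is a triple product rather than a single resolvent, so the column-replacement increment must be expanded via the product rule into three terms before applying $\|\mathbf R\|\le1/v$ to each; this is routine but must be done carefully to see that the increment is still $O(n^{-1/2}v^{-c})$ per column. The argument for $A_6$ with $[\mathbf H\mathbf J\mathbf R\mathbf H]_{k+n,k}$ and $\mathbf R_{j+n,j+n}$ is identical after relabeling the block indices, so I would prove one and remark that the other follows verbatim.
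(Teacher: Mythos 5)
Your reduction of the left-hand side to $\frac1{2n}\sum_k\abs{\mathrm{Cov}(a_k,b)}$ is valid as an upper bound but it is exactly where the argument loses the factor it needs. The quantity to be bounded is the covariance of the two \emph{averaged} statistics, $\mathrm{Cov}\bigl(\frac1{2n}\sum_k a_k,\,b\bigr)$, and the point of the paper's proof is that \emph{both} factors concentrate at rate $n^{-1/2}$: by one application of Cauchy--Schwarz this covariance is at most $\E^{\frac12}\bigl|\frac1n\sum_k(a_k-\E a_k)\bigr|^2\cdot\E^{\frac12}\abs{b-\E b}^2$, and Lemma \ref{var1} (variance of the $k$-average of $[\mathbf H\mathbf J\mathbf R\mathbf H]_{k,k+n}$ is $\le C/(nv^4)$) together with Lemma \ref{var0} (variance of $\frac1n\Tr\mathbf R$ is $\le C/(nv^2)$) makes the product of order $1/n$ times a power of $v^{-1}$. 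Once you pass to the sum of individual covariances you have discarded the averaging over $k$: a single entry $a_k=[\mathbf H\mathbf J\mathbf R\mathbf H]_{k,k+n}$ does not concentrate, only $\E\abs{a_k}^2=O(v^{-c})$ is available, so your estimate gives at best $\mathrm{Var}^{1/2}(b)\cdot O(v^{-c})$, which with the correct variance of $b$ is only $O(n^{-1/2})$, short of the asserted $C/(nv^2)$ by a factor $\sqrt n$.

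You attempt to compensate through ingredient (i), $\mathrm{Var}(b)\le C/(n^2v^2)$, but the bounded-difference martingale argument you describe does not deliver it: replacing one row changes $\Tr\mathbf R$ by $O(1/v)$ (the rank inequality), so the sum of $n$ squared increments is $O(n/v^2)$ and, after dividing by $n^2$, one gets $\mathrm{Var}\bigl(\frac1n\Tr\mathbf R\bigr)\le C/(nv^2)$ --- which is the paper's Lemma \ref{var0}; your own arithmetic ($n$ increments of size $1/v$, squared, divided by $n^2$) gives $1/(nv^2)$, not $1/(n^2v^2)$. An $O(n^{-2})$ variance bound for the trace of the resolvent is a substantially stronger statement than anything proved or needed here. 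The repair is to keep the covariance of the averages intact, apply Cauchy--Schwarz once, and invoke Lemma \ref{var1} (itself proved by the martingale decomposition, with per-row increments of $\sum_k[\mathbf H\mathbf J\mathbf R\mathbf H]_{k,k+n}$ controlled via Lemma \ref{norm1}) alongside Lemma \ref{var0}; with that change the rest of your outline, including handling $A_6$ by relabelling the blocks, coincides with the paper's proof.
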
 
\begin{proof}Using  Cauchy's inequality we have 
\begin{align} 
|A_5&+(\frac1{2n}\sum_{k=1}^n \E[\mathbf H\mathbf{J}\mathbf R\mathbf H]_{k,k+n})(\frac1n\sum_{j=1}^n\E \mathbf R_{jj})|\\ \le &\E^{\frac12}\left|\frac1n(\sum_{k=1}^n [\mathbf H\mathbf{J} \mathbf R\mathbf H]_{k,k+n}- 
\E\sum_{k=1}^n [\mathbf H\mathbf{J} \mathbf R\mathbf H]_{k,k+n})\right|^2\E^{\frac12}\left|\frac1n(\sum_{j=1}^n(\mathbf R_{jj}-\E \mathbf R_{jj}))\right|^2. 
\end{align} 
Applying Lemma \ref{var1} with $p=2$ and $q=1$ and Lemma \ref{var0} 
(see the Appendix),  we get 
\begin{equation} 
 |A_5+(\frac1{2n}\sum_{k=1}^n \E[\mathbf H\mathbf{J}\mathbf R\mathbf H]_{k,k+n})(\frac1n\sum_{j=1}^n\E \mathbf R_{jj})|\le\frac C{nv^2}. 
\end{equation} 
 
Similar we prove the second inequality in (\ref{l1}). 
Thus the Lemma is proved. 
\end{proof} 
Note that 
\begin{equation}\label{auxiliary7} 
\frac1n\sum_{j=1}^n\E \mathbf R_{jj}=\frac1n\sum_{j=1}^n\E \mathbf R_{j+n,j+n}=s_n(z). 
\end{equation} 
Lemma \ref{lem1}, equality (\ref{auxiliary7}) and the definition of matrix ${\mathbf H}$ together imply 
\begin{equation} 
A_5=-s_n(z)\frac1{2n}\sum_{j,k=1}^n\E X_{jk}\, [\mathbf H\mathbf{J}\mathbf R]_{j,k+n}+\frac{C\theta}{n v^4}, 
\end{equation} 
and similarly 
\begin{equation} 
A_6=-s_n(z)\frac1{2n}\sum_{j,k=1}^n\E X_{jk}\,  [\mathbf H\mathbf{J}\mathbf R]_{k+n,j}+\frac{C\theta}{n v^4}, 
\end{equation} 
where $\theta$ denotes a quantity such that $|\theta|\le 1$. 
Applying Lemma \ref{teilor} and equalities (\ref{auxiliary2})--(\ref{auxiliary5}), we get 
\begin{align}\label{auxialiry8} 
A_5&=-\frac12 s_n^2(z)+s_n^2(z)\frac1{2n}\sum_{j=1}^n\E [\mathbf H^2\mathbf{J}\mathbf R]_{jj}+A_7+A_9,\notag\\ 
A_6&=-\frac12 s_n^2(z)+s_n^2(z)\frac1{2n}\sum_{j=1}^n\E [\mathbf H^2\mathbf{J}\mathbf R]_{j+n,j+n}+A_8+A_{10}, 
\end{align} 
where 
\begin{align} 
A_7&=s_n(z)\frac1{2n^2}\sum_{j=1}^n[\mathbf H\mathbf{J}\mathbf R]_{j+n,j+n}\sum_{k=1}^n[\mathbf H\mathbf{J}\mathbf R]_{k+n,k},\notag\\ 
A_8&=s_n(z)\frac1{2n^2}\sum_{j=1}^n[\mathbf H\mathbf{J}\mathbf R]_{j,j+n}\sum_{k=1}^n[\mathbf H\mathbf{J}\mathbf R]_{k,k},\notag\\ 
A_9&=s_n(z)\frac1{2n^2}\sum_{j,k=1}^n[\mathbf H\mathbf{J}\mathbf R\mathbf H]_{j+n,k+n}\mathbf R_{j,k+n},\notag\\ 
A_{10}&=s_n(z)\frac1{2n^2}\sum_{j,k=1}^n[\mathbf H\mathbf{J}\mathbf R\mathbf H]_{j+n,k+n}\mathbf R_{j,k+n},\notag 
\end{align} 
By resolvent equality $\mathbf I+z\mathbf R=\mathbf W\mathbf R$, we have 
\begin{equation}\label{in101} 
\frac1{2n}(\sum_{j=1}^n\E [\mathbf H^2\mathbf{J}\mathbf R]_{jj}+\sum_{j=1}^n\E [\mathbf H^2\mathbf{J}\mathbf R]_{j+n,j+n})=1+zs_n(z). 
\end{equation} 
Equalities (\ref{main4}), (\ref{auxialiry8}) and (\ref{in101}) together imply 
\begin{equation}\label{in10} 
A_5+A_6=zs_n^3(z)+A_7+\cdots+A_{10}. 
\end{equation} 
 

\begin{lem}\label{lem6}Under the conditions of Theorem \ref{main} we have 
\begin{equation}\label{in12} 
\max\{|A_1|,\,|A_2|,\,|A_9|,\,|A_{10}|,\,|A_4|\}\le \frac C{nv^2}. 
\end{equation} 
\end{lem}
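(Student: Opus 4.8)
The plan is to bound all five quantities with a single device. Each of $A_1,A_2,A_4,A_9,A_{10}$ equals an absolute constant --- or the factor $s_n(z)$ --- times $\frac1{n^2}\E$ of a sum $\sum_{j,k=1}^n(\cdot)(\cdot)$ of products of two matrix entries, the two factors being entries of words in $\mathbf H$, $\mathbf J$ and $\mathbf R$ in which the resolvent $\mathbf R$ occurs exactly once. Since the $n^2$ pairs $(j,k)$ index $n^2$ distinct entries of each word, Cauchy--Schwarz gives
$$\Big|\sum_{j,k}[\mathbf P]_{j,k}[\mathbf Q]_{j,k}\Big|\le\norm{\mathbf P}_2\,\norm{\mathbf Q}_2 ,$$
and I would then estimate the two Hilbert--Schmidt norms by submultiplicativity, $\norm{\mathbf A\mathbf B}_2\le\norm{\mathbf A}\,\norm{\mathbf B}_2$, keeping the single factor $\mathbf R$ as the carrier of the $\norm{\cdot}_2$-norm and bounding the remaining factors in operator norm, using $\norm{\mathbf J}=1$, $\norm{\mathbf R}\le v^{-1}$, the deterministic bound $\norm{\mathbf R}_2^2=\sum_i|\lambda_i-z|^{-2}\le 2nv^{-2}$ (the matrix $\mathbf V=\mathbf H^2\mathbf J$ being Hermitian with $2n$ real eigenvalues $\lambda_i$), and the bound $\E\norm{\mathbf H}^{2p}\le C_p$, $p=1,2$, for the truncated matrix $\mathbf H$ (Lemma~\ref{norm0}).

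Concretely: $|A_1|\le\frac{C}{n^2}\,\E\big|\sum_j\mathbf R_{j,j+n}\big|\le\frac{C}{n^2}\sqrt n\,\norm{\mathbf R}_2\le C(nv)^{-1}\le C(nv^2)^{-1}$ (we may assume $v\le1$). For $A_2$ one has $|\sum_{j,k}[\mathbf H\mathbf J\mathbf R]_{jk}^2|\le\sum_{j,k}|[\mathbf H\mathbf J\mathbf R]_{jk}|^2\le\norm{\mathbf H\mathbf J\mathbf R}_2^2\le\norm{\mathbf H}^2\norm{\mathbf R}_2^2$, and likewise for the $(j+n,k+n)$ block, so $|A_2|\le n^{-2}\E(\norm{\mathbf H}^2\norm{\mathbf R}_2^2)\le n^{-2}\cdot2nv^{-2}\,\E\norm{\mathbf H}^2\le C(nv^2)^{-1}$. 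The terms $A_4,A_9,A_{10}$ are each $\frac1{n^2}$ times ($1$ or $s_n(z)$) times $\E$ of a sum of products of an entry of $\mathbf H\mathbf J\mathbf R\mathbf H$ (the word containing the single $\mathbf R$) and an entry of $\mathbf R$, with indices possibly shifted by $n$ --- which only selects sub-blocks and can only decrease Hilbert--Schmidt norms; the Cauchy--Schwarz bound above yields $\norm{\mathbf H\mathbf J\mathbf R\mathbf H}_2\norm{\mathbf R}_2\le\norm{\mathbf H}^2\norm{\mathbf R}_2^2\le2n\norm{\mathbf H}^2v^{-2}$, whence $|A_4|\le C(nv^2)^{-1}$, and since $|s_n(z)|$ is bounded on the compact subset $\mathcal K\subset\mathcal C^+$ over which we work, also $|A_9|,|A_{10}|\le C(nv^2)^{-1}$.

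The one step I expect to need genuine care is the bound $\E\norm{\mathbf H}^{2p}\le C_p$: estimating the operator norm through the Hilbert--Schmidt norm ($\norm{\mathbf H}\le\norm{\mathbf H}_2\le Cn\tau_n$) is far too weak, so one must invoke the sharp operator-norm bound for matrices with independent, centred, unit-variance entries truncated at level $\tau_n\sqrt n$ (Lemma~\ref{norm0}); this is precisely the place where the fourth-moment and Lindeberg assumptions of Theorem~\ref{main} re-enter. Everything else is bookkeeping: matching each double sum to the correct matrix entries and applying the submultiplicativity of $\norm{\cdot}_2$.
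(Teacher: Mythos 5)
Your overall device (Cauchy--Schwarz over the double sum, then splitting the product of norms into one Hilbert--Schmidt norm and operator norms) is the same as the paper's, but you assign the norms the opposite way round, and that is where the proof breaks. For each of $A_2,A_4,A_9,A_{10}$ you keep $\norm{\mathbf R}_2\le \sqrt{2n}\,v^{-1}$ and push everything else into operator norms of $\mathbf H$, so your bounds rest entirely on $\E\norm{\mathbf H}^{2p}\le C_p$, $p=1,2$. That estimate is not in the paper, and it is not what Lemma \ref{norm0} says: Lemma \ref{norm0} bounds the \emph{Frobenius} norms of powers of the truncated matrix, $\E\norm{\mathbf X^{\nu}(\mathbf X-\E\mathbf X)^{m-\nu}}_2^2\le C_m n$, and gives no control of the operator norm beyond the trivial $\norm{\mathbf H}\le\norm{\mathbf H}_2\sim\sqrt n$, which you yourself dismiss as far too weak. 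An expected operator-norm bound such as $\E\norm{n^{-1/2}\mathbf X}^{4}\le C$ under fourth-moment/Lindeberg assumptions is a genuinely nontrivial Bai--Yin/Lata{\l}a-type result that this paper never proves and deliberately avoids; citing Lemma \ref{norm0} for it is a mis-attribution, so the four hard terms are not actually bounded by your argument as written.

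The repair is to flip the assignment, which is what the paper does (it writes out $A_9$; the others are analogous): bound $|A_9|\le n^{-2}\E\norm{\mathbf H\mathbf J\mathbf R\mathbf H}_2\norm{\mathbf R}_2$, keep $\norm{\mathbf R}_2\le\sqrt{2n}/v$ for one resolvent, and for the other factor put the Hilbert--Schmidt norm on the $\mathbf H$-word and the operator norm on $\mathbf R$, namely $\norm{\mathbf H\mathbf J\mathbf R\mathbf H}_2\le\norm{\mathbf H^2}_2\norm{\mathbf R}\le v^{-1}\norm{\mathbf H^2}_2$ (the paper commutes the outer $\mathbf H$ to form $\mathbf H^2\mathbf J\mathbf R$), where Lemma \ref{norm0} genuinely applies and gives $\E\norm{\mathbf H^2}_2\le C\sqrt n$; this yields $C/(nv^2)$ with no operator-norm input on $\mathbf H$ at all. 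The same flip fixes $A_2$: $\norm{\mathbf H\mathbf J\mathbf R}_2\le\norm{\mathbf H}_2\norm{\mathbf R}\le v^{-1}\norm{\mathbf H}_2$ with $\E\norm{\mathbf H}_2^2\le Cn$. Two smaller points: your reduction ``we may assume $v\le1$'' for $A_1$ is backwards, since the lemma is applied in the regime $v\ge 4$ (resp. $v\ge C_0$); the $C/(nv)$ bound you actually obtain is harmless for the application, but it does not give the stated $C/(nv^2)$ by assuming $v\le1$. Also, no compactness argument is needed for $s_n$: one simply has $|s_n(z)|\le v^{-1}$.
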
 
\begin{proof} We shall describe the  estimate  (\ref{in12}) for the quantity $A_{9}$
  only. The other bounds will be similar. 
By H\"older's inequality, we have 
\begin{equation}\notag 
|A_{9}|\le \frac1{n^2}\E\|{\mathbf H}\mathbf J\mathbf R\mathbf H\|_2\|\mathbf R\|_2\le \frac1{n^{\frac32}v}\E\|{\mathbf H}\mathbf J\mathbf R\mathbf H\|_2\|, 
\end{equation} 
where $\|\cdot\|_2$ denotes the Hilbert--Schmidt norm of a matrix. Using 
\begin{equation} 
\|{\mathbf H}\mathbf{J}\mathbf R\mathbf H\|_2=\|\mathbf H^2\mathbf{J}\mathbf R\|_2\le\|\mathbf H^2\|_2\|\mathbf R\|\le \frac1v\|\mathbf H^2\|_2, 
\end{equation} 
and Lemma \ref{norm0}, we get 
\begin{equation}\notag 
|A_9|\le \frac{C\sqrt n}{n^{\frac32}v^2}\le \frac {C}{nv^2} 
\end{equation} 
Thus the Lemma is proved. 
\end{proof} 
 
Introduce the notations 
\begin{align} 
A:&=\frac1n\sum_{j=1}^n\E[\mathbf H\mathbf{J}\mathbf R]_{jj},\quad B:=\frac1n\sum_{j=1}^n\E[\mathbf H\mathbf{J}\mathbf R]_{j+n,j+n},\notag\\ 
C:&=\frac1n\sum_{j=1}^n\E[\mathbf H\mathbf{J}\mathbf R]_{j,j+n},\quad D:=\frac1n\sum_{j=1}^n\E[\mathbf H\mathbf{J}\mathbf R]_{j+n,j},\quad t(z):=\sum_{j=1}^n \E \mathbf R_{j,j+n}.\notag 
\end{align} 
Using these  notations we prove the following 
\begin{lem}\label{rel}The following representations hold 
\begin{align} 
A&=-s_n(z)C-s_n(z)D+\varepsilon_n(z),\quad 
B=-s_n(z)D-s_n(z)C+\varepsilon_n(z),\notag\\ 
C&=-t_n(z)D-s_n(z)A+\varepsilon_n(z),\quad D=-t_n(z)C-s_n(z)B+\varepsilon_n(z),\notag 
\end{align} 
where $|\varepsilon_n(z)|\le \frac {C\tau_n}{nv^4}.$ 
\end{lem}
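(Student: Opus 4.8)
\textbf{Proof plan for Lemma \ref{rel}.}

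The plan is to derive each of the four identities by the same mechanism that produced the decomposition (\ref{auxiliary6}): apply the Taylor/integration-by-parts formula of Lemma \ref{teilor} to a quantity of the form $\frac1{n\sqrt n}\sum_{j,k}\E X_{jk}[\mathbf G]_{\cdot\,\cdot}$, compute the relevant entry-derivatives using (\ref{auxiliary2})--(\ref{auxiliary5}), and then recognize the resulting sums as combinations of $A,B,C,D,s_n(z),t_n(z)$ plus error terms. Concretely, for the first identity I would start from the observation that, by definition of $\mathbf H$ and $\mathbf J$, the quantity $A=\frac1n\sum_j\E[\mathbf H\mathbf J\mathbf R]_{jj}$ can be written as $\frac1{n\sqrt n}\sum_{j,k}\E X_{jk}[\text{(appropriate product)}]_{\cdot\,\cdot}$, since one factor of $\mathbf H$ carries exactly one entry $X_{jk}$ (scaled by $n^{-1/2}$). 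Lemma \ref{teilor} then replaces $\E X_{jk}[\cdots]$ by $\E\partial_{X_{jk}}[\cdots]$ up to an error of size $C\tau_n^a v^{-b}$, and the derivative formulas (\ref{auxiliary2})--(\ref{auxiliary5}) produce a sum of several terms: the ``diagonal'' contractions give factors of $s_n(z)=\frac1n\sum_j\E\mathbf R_{jj}=\frac1n\sum_j\E\mathbf R_{j+n,j+n}$ (by (\ref{auxiliary7})) multiplied by $C$ or $D$, while the ``off-diagonal'' double sums $\frac1{n^2}\sum_{j,k}[\cdots]_{jk}[\cdots]_{kj}$ are bounded by $C/(nv^2)$ exactly as in Lemma \ref{lem6} and hence absorbed into $\varepsilon_n(z)$.

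The same scheme handles the other three lines, only with different index placements. For $B$ one uses the $(j+n,j+n)$ entries of $\mathbf H\mathbf J\mathbf R$; the roles of the two diagonal blocks get swapped, so the diagonal contractions now pair with $D$ and $C$ in the order shown. For $C=\frac1n\sum_j\E[\mathbf H\mathbf J\mathbf R]_{j,j+n}$ and $D=\frac1n\sum_j\E[\mathbf H\mathbf J\mathbf R]_{j+n,j}$, the off-diagonal structure of $\mathbf J$ means the single entry $X_{jk}$ sits on the other side, and one of the two diagonal contractions produces the factor $t_n(z)=\frac1n\sum_j\E\mathbf R_{j,j+n}$ instead of $s_n(z)$ (this is precisely why $t_n(z)$ appears only in the last two identities and not in the first two). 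I would keep careful track of which of the two terms in (\ref{auxiliary3}) / (\ref{auxiliary5}) contracts to a trace-like object $\frac1n\Tr\mathbf R$ versus $\frac1n\sum_j\mathbf R_{j,j+n}$; everything else is either one of $A,B,C,D$ times such a factor or a Hilbert--Schmidt-bounded remainder.

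For the error bookkeeping I would invoke Lemma \ref{norm0} for the operator/Hilbert--Schmidt norm bounds on $\mathbf H^k$, the resolvent bound $\|\mathbf R\|\le v^{-1}$, and Lemmas \ref{var1}, \ref{var0} for the concentration of the linear statistics (replacing products of expectations by expectations of products, as in Lemma \ref{lem1}); each such replacement costs at most $C\tau_n v^{-4}/n\le C\tau_n/(nv^4)$, which matches the claimed bound $|\varepsilon_n(z)|\le C\tau_n/(nv^4)$. The main obstacle I anticipate is purely organizational rather than conceptual: after differentiating one gets on the order of half a dozen terms per identity, and one must correctly classify each as (i) a ``main'' term $-s_n C$, $-s_n D$, $-t_n C$, $-t_n D$, $-s_n A$, or $-s_n B$, or (ii) negligible. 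The risk is mislabeling a term — e.g.\ confusing a contraction that yields $t_n(z)$ with one that yields $s_n(z)$, or dropping a term that is in fact of order $v^{-4}$ rather than $v^{-2}$ — so the care goes into the precise index analysis of the derivative formulas (\ref{auxiliary4})--(\ref{auxiliary5}) rather than into any new estimate.
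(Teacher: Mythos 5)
Your proposal follows essentially the same route as the paper: write each of $A,B,C,D$ as $\frac1{n\sqrt n}\sum_{j,k}\E X_{jk}[\cdot]$, apply the integration-by-parts Lemma \ref{teilor}, evaluate the derivative terms via (\ref{auxiliary2})--(\ref{auxiliary5}), factor the resulting double sums into products like $s_n(z)C$, $s_n(z)D$, $t_n(z)C$ using the concentration Lemmas \ref{var0}--\ref{var2} (as in Lemma \ref{lem1}), and absorb the remaining off-diagonal sums into $\varepsilon_n(z)$ by H\"older/Hilbert--Schmidt bounds as in Lemma \ref{lem6}. This is precisely the paper's argument, so the proposal is correct in approach and differs only in bookkeeping detail.
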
 
\begin{proof}We start with the first equality. By definition of $A$, we have 
\begin{equation}\notag 
A=\frac1{n\sqrt n}\sum_{j,k=1}^n \E X_{jk}\mathbf R_{k+n,j}. 
\end{equation} 
Using Lemma \ref{teilor}, we get 
\begin{align} 
A&=-\frac1{n^2}\sum_{j,k=1}^n \E \mathbf R_{j+n,j+n}[\mathbf H\mathbf{J}\mathbf R]_{k+n,k}\notag\\&-\frac1{n^2}\sum_{j,k=1}^n \E [\mathbf R\mathbf H]_{j+n,j+n} 
[\mathbf{J}\mathbf R]_{k+n,k}-\frac1{n^2}\sum_{j,k=1}^n \E \mathbf R_{j+n,k}[\mathbf H\mathbf{J}\mathbf R]_{j+n,k} 
\notag\\&-\frac1{n^2}\sum_{j,k=1}^n \E [\mathbf R\mathbf H]_{k+n,j}[\mathbf{J}\mathbf R]_{k,j}+\varepsilon_n(z).\notag 
\end{align} 
Applying Lemma \ref{var1} and \ref{var2}, we have
\begin{equation}\notag 
A=-s_n(z)C-s_n(z)D+\varepsilon_n(z). 
\end{equation} 
The proof of the other relations is similar. 
\end{proof} 
We may write now 
\begin{equation}\notag 
A_3+A_7+A_8=-CD+\frac12s_n(z)(BD+AC)=-\frac12(C-s_n(z)B)D-\frac12(D-s_n(z)A)C+\varepsilon_n(z). 
\end{equation} 
Applying the results of Lemma \ref{rel}, we obtain 
\begin{align} 
A_3+A_7+A_8&=-\frac12(D+C)^2-\frac12t_n(z)CD+\varepsilon_n(z)\notag\\&=-\frac12(1+\frac{t_n}4)(D+C)^2-\frac18t_n(z)(C-D)^2+\varepsilon_n(z)\notag\\ 
&=-\frac{\varepsilon_{n}^2(z)(1+t_n(z)/4)^2}{2(1+t_n(z)-2s_n^2(z))^2}-\frac{\varepsilon_{n}^2(z)}{(1-t_n(z))^2}+\varepsilon_{n}(z).\notag 
\end{align} 
Consider first the case $v\ge 4$. Here we have 
\begin{equation}\notag 
|s_n(z)|\le \frac14,\quad |t_n(z)|\le \frac14. 
\end{equation} 
These inequalities imply that for $v\ge 4$ 
\begin{equation}\label{in000} 
|A_3+A_7+A_8|\le \frac {C\tau_n}{v^4}. 
\end{equation} 
Inequalities (\ref{auxiliary6}), (\ref{in10}), (\ref{in12}), and (\ref{in000}) together imply 
\begin{equation}\label{direct} 
1+zs_n(z)=zs_n^3(z)+\delta_n(z), 
\end{equation} 
where $|\delta_n(z)|\le \frac {C\tau_n}{v^4}$, for $v>4$.

\begin{lem}\label{stieltjes} Assuming the conditions of Theorem \ref{main} 
there exists some positive constants $C_0, \,C_1$ such that, for $v\ge C_0$, 
\begin{equation} 
|s(z)-s_n(z)|\le \frac{C_1|\varepsilon_n(z)|}{v}. 
\end{equation} 
 
\end{lem}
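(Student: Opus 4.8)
The plan is to treat equation (\ref{direct}) as a perturbation of the limiting equation (\ref{main211}) and to use the smoothness of the branch of the cubic $1+zw-zw^3=0$ selected by the Stieltjes-transform normalization on the region where $v$ is large. First I would record the a priori bounds: since $s_n(z)$ and $s(z)$ are Stieltjes transforms of probability measures, $|s_n(z)|\le 1/v$ and $|s(z)|\le 1/v$, so for $v\ge C_0$ with $C_0$ large both lie in a small disc around $0$. Subtracting (\ref{main211}) from (\ref{direct}) gives
\begin{equation}\notag
z(s_n(z)-s(z))-z\bigl(s_n^3(z)-s^3(z)\bigr)=\delta_n(z),
\end{equation}
and factoring $s_n^3-s^3=(s_n-s)(s_n^2+s_ns+s^2)$ yields
\begin{equation}\notag
(s_n(z)-s(z))\,z\bigl(1-s_n^2(z)-s_n(z)s(z)-s^2(z)\bigr)=\delta_n(z).
\end{equation}
The whole argument then reduces to bounding the reciprocal of the bracketed factor from below.

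Next I would estimate that factor. On $\{v\ge C_0\}$ we have $|s_n(z)|,|s(z)|\le 1/C_0$, hence $|s_n^2+s_ns+s^2|\le 3/C_0^2$, so for $C_0$ large enough (say $C_0\ge C$ for a suitable absolute constant) we get $|1-s_n^2-s_ns-s^2|\ge 1/2$. Also $|z|\ge v$ trivially, so $|z(1-s_n^2-s_ns-s^2)|\ge v/2$. Therefore
\begin{equation}\notag
|s_n(z)-s(z)|\le \frac{2|\delta_n(z)|}{v},
\end{equation}
and since $|\delta_n(z)|\le C\tau_n v^{-4}$ is of the form $|\varepsilon_n(z)|$, this is exactly the claimed bound with $C_1=2$ (after adjusting constants to absorb the $v^{-4}$ into $v^{-1}$, or simply keeping the sharper estimate). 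The constant $C_0$ is chosen once and for all so that the denominator bound holds.

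The one genuine subtlety — and the step I expect to be the main obstacle — is not the algebra but ensuring we are comparing $s_n(z)$ with the \emph{correct} root of the cubic, i.e.\ that the cubic $1+zw-zw^3=0$ has, for large $v$, a unique root in the small disc $\{|w|\le 2/v\}$ and that this root is $s(z)$. This follows from a Rouch\'e or implicit-function-theorem argument: for $|w|\le 2/v$ the term $zw^3$ is of order $v^{-2}$ in absolute value compared with $1+zw$, which ranges over a region bounded away from $0$, so the root count in the disc is governed by $1+zw=0$, giving exactly one root $w\approx -1/z$; and $s(z)$, being $O(1/v)$, must be that root. Once this localization is in hand, the perturbation estimate above closes the proof. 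I would also remark that the bound automatically extends $s_n(z)\to s(z)$ from $\{v\ge C_0\}$ to all of $\mathbb{C}^+$ by the standard normal-families/Vitali argument, but that is deferred to the subsequent part of the paper.
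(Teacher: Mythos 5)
Your proposal is correct, and at the top level it follows the same route as the paper: subtract the exact cubic (\ref{main211}) from the approximate relation (\ref{direct}), factor $s_n^3-s^3$, and lower-bound the resulting denominator by $v/2$ for $v\ge C_0$. Where you differ is in how that lower bound is obtained. The paper estimates the modulus of $z-zs^2(z)-zs(z)s_n(z)-zs_n^2(z)$ from below by its imaginary part, which requires two extra ingredients: the sign fact $\im\{zs^2(z)\}\le 0$ (derived in (\ref{in17})--(\ref{in18}) from the exact equation for $s$ and the Stieltjes-transform inequality $v|s(z)|^2\le \im s(z)$), and the bound $\max\{|zs_n^2(z)|,\,|zs(z)s_n(z)|\}\le v^{-1}(1+Cv^{-1})$ of (\ref{in30}), which in turn invokes the norm estimate of Lemma \ref{norm0}. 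You instead keep $z$ factored out and use only the trivial bounds $|s(z)|,|s_n(z)|\le v^{-1}$ and $|z|\ge v$, so that $|z|\,\bigl|1-s_n^2(z)-s_n(z)s(z)-s^2(z)\bigr|\ge v(1-3v^{-2})\ge v/2$ once $C_0$ is moderately large; this is valid, shorter, and dispenses with Lemma \ref{norm0} and the imaginary-part argument entirely, while yielding the same conclusion since $|\delta_n(z)|\le C\tau_n v^{-4}$ is of the admissible $\varepsilon_n(z)$ form. One remark: the Rouch\'e/root-selection step that you flag as the main obstacle is not actually needed here. In this lemma $s(z)$ is, by definition, the Stieltjes transform of the symmetrized limit law $G$, and it satisfies (\ref{main211}) by construction; the comparison argument never requires knowing that the cubic has a unique small root, only that both functions satisfy their respective equations and are $O(1/v)$. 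So that paragraph is harmless but superfluous.
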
 
\begin{proof}First we note that 
\begin{equation} 
 |zs_n(z)|\le 1+\frac1v\E^{\frac12}\|\mathbf V\|_2^2. 
\end{equation} 
Applying Lemma \ref{norm0} and that $\max\{|s(z)|,\,|s_n(z)\}\le v^{-1}$, we get 
\begin{equation}\label{in30} 
\max\{|zs_n^2(z)|,\,|zs(z)s_n(z)|\}\le \frac1v(1+\frac {C}v). 
\end{equation} 
Furthermore, 
\begin{equation}\label{in20} 
 \im\{zs^2(z)\}\le 0. 
\end{equation} 
It follows from equality (\ref{very}) that 
\begin{equation}\label{in17} 
\im zs^2(z)=\im\{z+\frac1{s(z)}\}=\frac{v|s(z)|^2-\im s(z)}{|s(z)|^2}. 
\end{equation} 
For a Stieltjes  transform $t(z)$ of a random variable $\xi$ we have 
\begin{equation}\label{in18} 
v|t(z)|^2-\im t(z)=v(\left|\E \frac1{\xi-z}\right|^2-\E\left|\frac1{\xi-z}\right|^2)\le 0. 
\end{equation} 
Equalities (\ref{in17}) and (\ref{in18}) together imply (\ref{in20}). 
From relations (\ref{very}) and (\ref{direct}) we obtain 
\begin{equation}\label{in50} 
|s_n(z)-s(z)|\le \frac{|\delta_n(z)|}{|z-zs^2(z)-zs(z)s_n(z)-zs^2_n(z)|}. 
\end{equation} 
Inequalities (\ref{in30}), (\ref{in20}) together imply that, for 
 $v\ge 4C$, 
\begin{equation}\label{in60} 
|z-zs^2(z)-zs(z)s_n(z)-zs^2_n(z)|\ge \im\{z-zs^2(z)-zs(z)s_n(z)-zs^2_n(z)\}\ge \frac v2. 
\end{equation} 
Inequalities (\ref{in50}) and (\ref{in60}) together completed the proof of lemma. 
\end{proof} 
The last Lemma implies that 
there exists an open set in $\mathcal C^+$  with non-empty interior 
such that $s_n(z)$ convergence to $s(z)$ on this set. 
 The Stieltjes transform of these
random variables is an  analytic function on $\mathcal C^+$ 
and locally bounded, that  is ($|s_n(z)|\le v^{-1}$ for any $v>0$).
 By Montel's Theorem (see, for instance, \cite{Conway}, p. 153, Theorem 2.9) 
$s_n(z)$ converges  to $s(z)$ uniformly on  any compact set 
in the upper half-plane $\mathcal K\subset \mathcal C^+$. 
  This implies that $\Delta_n\to 0$ as $n\to\infty$. 
Thus the proof of  Theorem \ref{main} in the case $m=2$ is complete.
 
\section{  The proof of the main result in general case}
Recall that $\mathbf H$ and $\mathbf J$ are defined by the equalities 
\begin{equation} 
\mathbf H=\left(\begin{matrix}{\mathbf X\quad \mathbf O}\\{\mathbf O\quad \mathbf X^*}\end{matrix}\right), 
\qquad 
\mathbf{ J}=\left(\begin{matrix}{\mathbf O\quad \mathbf I}\\{\mathbf I\quad \mathbf O}\end{matrix}\right). 
\end{equation} 
Let $\mathbf V:=\mathbf H^m\mathbf{J}$, and $\mathbf R(z)$ denote the
resolvent matrix of
the  matrix $\mathbf V$, 
\begin{equation}\notag 
\mathbf R(z):=(\mathbf V-z\mathbf I)^{-1}. 
\notag 
\end{equation} 
We shall use the following  ``symmetrization'' of a one-sided distribution.
 Let $\xi^2$ be a positive random variable. Define 
$\widetilde \xi:=\varepsilon\xi$,  where $\varepsilon$ denotes a Rademacher
random variable
 with $\Pr\{\varepsilon=\pm1\}=1/2$ which is independent of 
$\xi$. 
We apply this symmetrization to the distribution of the singular values of the
matrix $\mathbf X^2$. 
Note that the symmetrized  distribution function 
$\widetilde F_n(x)$ satisfies the equation 
\begin{equation}\notag 
\widetilde F_n(x)=1/2(1+\text{sgn} \{x\}\,F_n(x^2)), 
\end{equation} 
and that this function is the empirical spectral distribution function of 
the random matrix $$\mathbf V=\left(\begin{matrix}{\mathbf O\quad\mathbf X^m}\\ 
{\mathbf {X^*}^m\quad \mathbf O}\end{matrix}\right).$$ 
Furthermore, note that the symmetrization of the distribution function $G(x)$ 
has the Stieltjes transform $s(z)$ which satisfies the following equation 
\begin{equation}\label{main210} 
1+zs(z)+(-1)^{m+1}z^{m-1}s^{m+1}(z)=0. 
\end{equation} 
In the rest of paper we shall prove that the Stieltjes transform of 
the expected spectral distribution function $s_n(z)=\int_{-\infty}^{\infty}\frac1{x-z} 
\text{d}\E \widetilde F_n(x)$ satisfies the equation 
\begin{equation}\label{main31} 
1+zs_n(z)+(-1)^{m+1}z^{m-1}s_n^{m+1}(z)=\delta_n(z), 
\end{equation} 
where $\delta_n(z)$ denotes some remainder function 
such that $\delta_n(z)\to 0$ as $n\to \infty$. 
 
We start from the obvious  equality 
\begin{equation}\label{main41} 
1+zs_n(z)=\frac1{2n}\Tr \mathbf V\mathbf R(z). 
\end{equation} 
Using the definition of the matrices $\mathbf V$, $\mathbf H$ and $\mathbf {J}$, we get 
\begin{equation}\label{auxiliary01} 
1+zs_n(z)=\frac1{2n\sqrt n}\sum_{j,k=1}^n\E X_{jk}\left([\mathbf H^{m-1}\mathbf J\mathbf R]_{kj} 
+[\mathbf {H}^{m-1}\mathbf J\mathbf R]_{j+n,k+n}\right). 
\end{equation} 
In order to simplify the  calculations 
we shall assume that $X_{jk}$ are i.i.d. Gaussian random variables, 
and shall use the following well-known  equality for a Gaussian r.v. $\xi$ 
\begin{equation}\label{auxiliary11} 
\E\xi f(\xi)=\E f'(\xi), 
\end{equation} 
which holds for arbitrary differentiable functions $f(x)$, such that both
sides are defined. 
By Lemma \ref{teilor}, we obtain that the error of the replacement by Gaussian
r.v is of order $O(\tau_n)$. 
In the what follows we shall use the  notation $\varepsilon_n(z)$  
for  functions satisfying  $|\varepsilon_{n}(z)|\le 
C\tau_n^av^{-b}$, for some positive constants $a,b$, and $C$. 
Let $\mathbf e_1,\ldots,\mathbf e_{2n}$ denote an orthonormal basis  of $\mathbb R^{2n}$. 
First we note that 
\begin{equation}\label{auxiliary21} 
\frac{\partial  \mathbf H}{\partial X_{jk}}=\frac1{\sqrt n}(\mathbf e_j\mathbf e_k^T+\mathbf e_{k+n}\mathbf e_{j+n}^T). 
\end{equation} 
Now we may write the equality for the derivatives of the matrix $\mathbf H^{m-1}\mathbf J\mathbf R$ as follows 
\begin{align}\label{auxiliary41} 
\frac{\partial \mathbf H^{m-1}\mathbf J\mathbf R}{\partial X_{jk}}&=\frac1{\sqrt n}\sum_{q=0}^{m-2}\mathbf H^q(\mathbf e_j\mathbf e_k^T+\mathbf e_{k+n}\mathbf e_{j+n}^T) 
\mathbf H^{m-2-q}\mathbf J\mathbf R\notag\\ 
&-\frac1{\sqrt n }\sum_{q=0}^{m-1}\mathbf H^{m-1}\mathbf J\mathbf R\mathbf H^q(\mathbf e_j\mathbf e_k^T+\mathbf e_{k+n}\mathbf e_{j+n}^T)\mathbf H^{m-1-q}\mathbf J\mathbf R. 
\end{align} 
The equalities (\ref{auxiliary01}) and (\ref{auxiliary41}) together imply 
\begin{equation}\label{auxiliary61} 
1+zs_n(z)=A_1+A_2+B_1+B_2+C_1+C_2+D_1+D_2+\varepsilon_n(z), 
\end{equation} 
where 
\begin{align} 
A_1&:=\sum_{q=0}^{m-2}\frac1{2n^2}\E\sum_{j,k=1}^n\mathbf H^q_{kj}[\mathbf H^{m-2-q}\mathbf J\mathbf R]_{kj},\notag\\ 
A_2&=\sum_{q=0}^{m-2}\frac1{2n^2}\E\sum_{j,k=1}^n\mathbf H^q_{k,k+n}[\mathbf H^{m-2-q}\mathbf J\mathbf R]_{j+n,j},\notag\\ 
B_1&:=-\sum_{q=0}^{m-1}\frac1{2n^2}\E\sum_{j,k=1}^n [\mathbf H^{m-1}\mathbf J\mathbf R\mathbf H^{q}]_{k,j}[\mathbf H^{m-1-q}\mathbf J\mathbf R]_{k,j}\notag\\ 
B_2&=-\sum_{q=0}^{m-1}\sum_{k,j=1}^n \E[\mathbf H^{m-1}\mathbf J\mathbf R\mathbf H^{m-1-q}]_{k,k+n}[\mathbf H^{m-1-q}\mathbf J\mathbf R]_{j+n,j},\notag\\ 
C_1&=\sum_{q=0}^{m-2}\frac1{2n^2}\E\mathbf H^q_{j+n,k+n}[\mathbf H^{m-1-q}\mathbf J\mathbf R]_{j+n,k+n},\notag\\ 
C_2&:=\sum_{q=0}^{m-2}\frac1{2n^2}\E\sum_{j,k=1}^n\mathbf H^{q}_{j+n,j}[\mathbf H^{m-2-q}\mathbf J\mathbf R]_{k,k+n} 
\notag\\ 
D_1&:=-\sum_{q=0}^{m-1}\frac1{2n^2}\E\sum_{j,k=1}^n [\mathbf H^{m-1}\mathbf J\mathbf R\mathbf H^{q}]_{j+n,k+n}[\mathbf H^{m-1-q}\mathbf J\mathbf R]_{j+n,k+n}\notag\\ 
D_2&=-\sum_{q=0}^{m-1}\sum_{k,j=1}^n \E[\mathbf H^{m-1}\mathbf J\mathbf R\mathbf H^{m-1-q}]_{j+n,j}[\mathbf H^{m-1-q}\mathbf J\mathbf R]_{k,k+n}.\notag 
\end{align} 
\begin{lem} 
Under the conditions of Theorem \ref{main} there exists a constant $C>0$ that the following inequality holds 
\begin{equation} 
\max\{|A_1|,\,|B_1|\,|C_1|,\,|D_1|\}\le \frac C{nv}. 
\end{equation} 
\end{lem}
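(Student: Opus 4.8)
The plan is to bound each of $A_1, B_1, C_1, D_1$ by the same argument, so I will carry it out in detail only for $A_1$ (as the authors did for $A_9$ in the $m=2$ case in Lemma~\ref{lem6}) and remark that the others are entirely analogous after relabeling indices. Recall
\begin{equation}\notag
A_1 = \sum_{q=0}^{m-2}\frac{1}{2n^2}\,\E\sum_{j,k=1}^n \mathbf H^q_{kj}\,[\mathbf H^{m-2-q}\mathbf J\mathbf R]_{kj}.
\end{equation}
The first step is to recognize the inner double sum over $j,k$ as (the real part of) a trace of a product: $\sum_{j,k}\mathbf H^q_{kj}[\mathbf H^{m-2-q}\mathbf J\mathbf R]_{kj} = \Tr\big((\mathbf H^q)^T \mathbf H^{m-2-q}\mathbf J\mathbf R\big)$ up to the usual indexing, which I bound by the Hilbert--Schmidt pairing $|\Tr(\mathbf A^*\mathbf B)|\le\|\mathbf A\|_2\|\mathbf B\|_2$. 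Thus
\begin{equation}\notag
|A_1|\le \frac{1}{2n^2}\sum_{q=0}^{m-2}\E\,\|\mathbf H^q\|_2\,\|\mathbf H^{m-2-q}\mathbf J\mathbf R\|_2.
\end{equation}

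The second step is to peel off the resolvent using its operator-norm bound $\|\mathbf R\|\le v^{-1}$ (valid on $\mathcal C^+$) together with the identity $\|\mathbf J\|=1$, giving $\|\mathbf H^{m-2-q}\mathbf J\mathbf R\|_2\le v^{-1}\|\mathbf H^{m-2-q}\|_2$. Then I estimate the Hilbert--Schmidt norms of the powers of $\mathbf H$: since $\mathbf H$ is the block-diagonal matrix with blocks $\mathbf X$ and $\mathbf X^*$, one has $\|\mathbf H^p\|_2^2 = \|\mathbf X^p\|_2^2 + \|(\mathbf X^*)^p\|_2^2 = 2\|\mathbf X^p\|_2^2$, and I invoke Lemma~\ref{norm0} (together with the truncation bound $|X_{jk}|\le\tau_n\sqrt n$ and the second-moment normalization) to get $\E\|\mathbf X^p\|_2^2 \le C n^{p+1}$ for fixed $p$, hence $\E\|\mathbf H^p\|_2^2\le C n^{p+1}$. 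Applying Cauchy--Schwarz in the expectation to split the product $\|\mathbf H^q\|_2\cdot\|\mathbf H^{m-2-q}\|_2$ and inserting these bounds yields, for each $q$,
\begin{equation}\notag
\E\,\|\mathbf H^q\|_2\,\|\mathbf H^{m-2-q}\|_2 \le \big(\E\|\mathbf H^q\|_2^2\big)^{1/2}\big(\E\|\mathbf H^{m-2-q}\|_2^2\big)^{1/2} \le C\, n^{(q+1)/2}\,n^{(m-1-q)/2} = C\, n^{m/2}.
\end{equation}
Combining the pieces gives $|A_1|\le \dfrac{1}{2n^2}\cdot\dfrac{1}{v}\cdot (m-1)\cdot C n^{m/2}$, which is $O(n^{m/2-2}v^{-1})$; for $m=2$ this is exactly the claimed $C/(nv)$, and for larger $m$ one sees the bound as stated is correct only after noting that the matrix $\mathbf V=\mathbf H^m\mathbf J$ has been normalized by $n^{-m/2}$ in the definition of $\mathbf W$, so the factor $n^{m/2}$ is cancelled by that normalization and the honest bound is $C/(nv)$ uniformly in $m$. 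I would make this normalization bookkeeping explicit at the start of the estimate.

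The main obstacle is precisely this normalization accounting: the matrices $\mathbf H$, $\mathbf V$, $\mathbf R$ in this section implicitly carry the $n^{-m/2}$ scaling inherited from $\mathbf W = n^{-m/2}\mathbf X^m$, and one must track it carefully so that $\E\|\mathbf H^p\|_2^2$ is really $O(n)$ per matrix factor rather than $O(n^{p+1})$ once rescaled; getting the powers of $n$ to land on $C/(nv)$ rather than something weaker is the only delicate point, and it is resolved entirely by Lemma~\ref{norm0} applied to the rescaled entries. The extraction of a trace from the double sum and the $\|\mathbf R\|\le v^{-1}$ bound are routine. For $B_1, C_1, D_1$ the structure is identical: each is a sum over $q$ of $n^{-2}\E$ of a product of a matrix entry (or a $[\mathbf H^{m-1}\mathbf J\mathbf R\mathbf H^q]$ entry) with an $[\mathbf H^{m-1-q}\mathbf J\mathbf R]$ entry, which one rewrites as a Hilbert--Schmidt pairing, strips the resolvents at cost $v^{-1}$ each, and controls by Lemma~\ref{norm0} exactly as above.
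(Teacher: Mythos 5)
Your overall strategy is the paper's own: the authors prove this lemma in one line by ``H\"older's inequality and Lemma \ref{norm0}'', and for $A_1$ (and likewise $C_1$) your execution is essentially right --- Cauchy--Schwarz over the index pair gives $|A_1|\le \frac1{2n^2}\sum_q\E\|\mathbf H^q\|_2\|\mathbf H^{m-2-q}\mathbf J\mathbf R\|_2$, one strips $\mathbf R$ at cost $v^{-1}$, and Lemma \ref{norm0} plus Cauchy--Schwarz in the expectation gives $\E\|\mathbf H^q\|_2\|\mathbf H^{m-2-q}\|_2\le Cn$, hence $C/(nv)$. Your detour through $\E\|\mathbf X^p\|_2^2\le Cn^{p+1}$ misreads Lemma \ref{norm0}: the $\mathbf X$ there is already $n^{-1/2}(X_{jk})$, exactly the matrix sitting inside $\mathbf H$ in this section (see the derivative formula $\partial\mathbf H/\partial X_{jk}=n^{-1/2}(\dots)$), so the correct input is simply $\E\|\mathbf H^p\|_2^2\le Cn$ with no further rescaling; also note your ``the factor $n^{m/2}$ is cancelled'' is internally inconsistent (full cancellation would yield $C/(n^2v)$; the honest count is $n^{m/2}\cdot n^{-(m-2)/2}=n$, and then $n/(n^2v)=1/(nv)$). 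These are repairable bookkeeping slips rather than a wrong idea.

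The genuine gap is the last sentence, where you claim $B_1$ and $D_1$ go through ``exactly as above''. They do not: there the first factor is $[\mathbf H^{m-1}\mathbf J\mathbf R\mathbf H^{q}]$, with powers of $\mathbf H$ on \emph{both} sides of the resolvent. In a product one may take the Hilbert--Schmidt norm of only one factor and must use operator norms on the rest, so after stripping $\mathbf R$ you are left either with $\|\mathbf H^{m-1}\|_2\,\|\mathbf H^q\|$, and the operator norm $\|\mathbf H^q\|$ is not bounded by a constant under the paper's fourth-moment/truncation assumptions, or with the three-fold product $\|\mathbf H^{m-1}\|_2\|\mathbf H^q\|_2\|\mathbf H^{m-1-q}\|_2$, whose expectation needs third or fourth moments of Frobenius norms that Lemma \ref{norm0} does not provide and which in any case only yields a bound of order $n^{-1/2}v^{-2}$, weaker than the claimed $C/(nv)$. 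The missing step is the one the paper uses for the analogous term $A_9$ in the $m=2$ case (Lemma \ref{lem6}): first collect the powers of $\mathbf H$ on one side of the resolvent, as in $\|\mathbf H\mathbf J\mathbf R\mathbf H\|_2=\|\mathbf H^2\mathbf J\mathbf R\|_2$, so that Lemma \ref{norm0} applies to a single power $\mathbf H^{m-1+q}$ and only one resolvent needs to be stripped; some such rearrangement (or an independent operator-norm control of $\mathbf H^q$) has to be supplied for $B_1$ and $D_1$, and your sketch contains neither.
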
 
\begin{proof} 
 To prove this lemma it is  enough to use H\"older's inequality and Lemma
 \ref{norm0} in the  Appendix. 
\end{proof} 
 
\begin{lem} 
Under the conditions of Theorem \ref{main} we have 
\begin{equation} 
A_2=C_2=0. 
\end{equation} 
\end{lem}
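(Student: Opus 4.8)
The plan is to observe that the asserted identity is purely a consequence of the block structure of $\mathbf H$, with no probabilistic input whatsoever. First I would recall that
$$
\mathbf H=\left(\begin{matrix}\mathbf X & \mathbf O\\ \mathbf O & \mathbf X^*\end{matrix}\right)
$$
is block-diagonal, so that every non-negative integer power is again block-diagonal,
$$
\mathbf H^q=\left(\begin{matrix}\mathbf X^q & \mathbf O\\ \mathbf O & (\mathbf X^*)^q\end{matrix}\right),
$$
and this holds also for $q=0$ since $\mathbf H^0=\mathbf I_{2n}$. Consequently, for every $q\ge 0$ and every pair $1\le j,k\le n$, the entry $[\mathbf H^q]_{k,k+n}$ lies in the upper-right $n\times n$ block of $\mathbf H^q$, which equals $\mathbf O$, and $[\mathbf H^q]_{j+n,j}$ lies in the lower-left block, which also equals $\mathbf O$. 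Hence
$$
\mathbf H^q_{k,k+n}=0,\qquad \mathbf H^q_{j+n,j}=0\qquad\text{for all }1\le j,k\le n,\ 0\le q\le m-2.
$$

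Given this, the lemma follows at once. In the definition of $A_2$ every summand carries the factor $\mathbf H^q_{k,k+n}$, which is zero; in the definition of $C_2$ every summand carries the factor $\mathbf H^q_{j+n,j}$, which is zero. Thus both double sums vanish termwise, so that $A_2=C_2=0$, independently of $z$ and of the distribution of the entries $X_{jk}$.

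I expect no real obstacle here. The only point requiring a little care is the range of the summation index $q$, which runs from $0$ to $m-2$: the displayed vanishing must include the endpoint cases $q=0$ and $q=m-2$, and it does, the case $q=0$ being covered by the observation that $\mathbf I_{2n}$ has no nonzero entries linking the two blocks. It is worth contrasting this with the quantities $A_1$ and $C_1$ in the same decomposition (and with their $m=2$ analogues in the previous section): there the relevant factor is $\mathbf H^q_{kj}$ with $1\le j,k\le n$, i.e.\ a \emph{diagonal}-block entry $[\mathbf X^q]_{kj}$, which is generically nonzero, so those terms genuinely have to be estimated rather than dropped.
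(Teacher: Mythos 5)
Your proof is correct and is essentially the paper's own argument: the paper likewise disposes of $A_2$ and $C_2$ by the single observation that $\mathbf H^q$ is block-diagonal, so its off-diagonal block entries such as $\mathbf H^q_{k,k+n}$ and $\mathbf H^q_{j+n,j}$ vanish, killing every summand. Your write-up merely spells out (correctly) the block structure and the endpoint cases $q=0$ and $q=m-2$.
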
 
\begin{proof} 
The claim follows immediately from the equality $\mathbf H^q_{j,j+n}=0$. 
 
\end{proof} 
 
To investigate the asymptotic behavior of $B_2$ and $D_2$ we introduce the notations 
\begin{align} 
f_{\alpha,\beta}&:=\frac1n\sum_{j=1}^n\E[\mathbf H^{\alpha}\mathbf J\mathbf R\mathbf H^{\beta}]_{j,j+n},\quad 
g_{\alpha,\beta}:=\frac1n\sum_{j=1}^n\E[\mathbf H^{\alpha}\mathbf J\mathbf R\mathbf H^{\beta}]_{j+n,j},\notag\\ 
t_{\alpha}&:=\frac1n\sum_{j=1}^n\E[\mathbf H^{\alpha}\mathbf J\mathbf R]_{jj},\quad 
u_{\alpha}:=\frac1n\sum_{j=1}^n\E[\mathbf H^{\alpha}\mathbf J\mathbf R]_{j+n,j+n}\notag 
\end{align} 
We prove the following 
\begin{lem}\label{lem11} Assuming the  conditions of Theorem \ref{main} 
there exists constant  $C>0$ such that the following inequality holds 
\begin{align} 
|B_2+\sum_{q=0}^{m-1}f_{m-1,q}g_{m-1-q,0}|\le\frac C{nv^4},\notag\\ 
|D_2+\sum_{q=0}^{m-1}g_{m-1,q}f_{m-1-q,0}|\le\frac C{nv^4}. 
\end{align} 
\end{lem}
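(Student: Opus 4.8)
The plan is to treat this as the general-$m$ analogue of Lemma~\ref{lem1}: the term $B_2$ is, up to the normalizing constants built into the definitions of $f$ and $g$, a finite sum over $q$ of expectations of products of two ``trace-type'' functionals, and the assertion is precisely that each such expectation factorizes into the product of the expectations of its two factors, with a remainder controlled by the fact that each factor concentrates about its mean.

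Concretely, I would first re-index the inner summation (substituting $q\mapsto m-1-q$ where necessary) so that, writing
\begin{equation}\notag
\Xi_q:=\sum_{k=1}^n[\mathbf H^{m-1}\mathbf J\mathbf R\mathbf H^{q}]_{k,k+n},\qquad
\Upsilon_q:=\sum_{j=1}^n[\mathbf H^{m-1-q}\mathbf J\mathbf R]_{j+n,j},
\end{equation}
one has $B_2=-\sum_{q=0}^{m-1}c_n\,\E\,\Xi_q\Upsilon_q$ with the normalizing constant $c_n$ (as in the definition of $B_2$) satisfying $c_n\,(\E\Xi_q)(\E\Upsilon_q)=f_{m-1,q}\,g_{m-1-q,0}$ and $c_n\le Cn^{-2}$. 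Using the identity $\E\,\Xi_q\Upsilon_q=(\E\Xi_q)(\E\Upsilon_q)+\E\big[(\Xi_q-\E\Xi_q)(\Upsilon_q-\E\Upsilon_q)\big]$ together with Cauchy's inequality for the last term, the lemma is reduced to the variance estimates
\begin{equation}\notag
\big(\E|\Xi_q-\E\Xi_q|^2\big)^{1/2}\le \frac{C\sqrt n}{v^{2}},\qquad
\big(\E|\Upsilon_q-\E\Upsilon_q|^2\big)^{1/2}\le \frac{C\sqrt n}{v^{2}},
\end{equation}
since these give $c_n\big|\E[(\Xi_q-\E\Xi_q)(\Upsilon_q-\E\Upsilon_q)]\big|\le Cn^{-1}v^{-4}$ for each $q$, and the sum over the finitely many values of $q$ remains of the same order. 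These two variance bounds are exactly of the type supplied by Lemma~\ref{var1} (with the parameters appropriate to a quadratic functional of the entries of $\mathbf X$) together with Lemma~\ref{var2}, the operator norms of the extra factors $\mathbf H^{m-1}$ and $\mathbf H^{q}$ now present being absorbed by Lemma~\ref{norm0}, exactly as in the proofs of Lemma~\ref{lem1} and Lemma~\ref{rel}. The second inequality, for $D_2$, follows from the identical computation after interchanging the indices $j$ and $j+n$ throughout (equivalently, interchanging the roles of $f$ and $g$), using the symmetry of the construction under $\mathbf X\leftrightarrow\mathbf X^*$.

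The step I expect to be the main obstacle is establishing the two variance estimates with the correct power of $v$. In contrast with the case $m=2$ of Section~3, where only a single factor $\mathbf H$ multiplies the resolvent, here up to $\mathbf H^{m-1}$ occurs on each side of $\mathbf R$, so the martingale-difference (or interpolation) decomposition underlying Lemma~\ref{var1} must be carried out in the presence of these unbounded factors; one has to verify that replacing $\mathbf R$ by $\mathbf H^{m-1}\mathbf J\mathbf R\mathbf H^{q}$ costs only the factor $\|\mathbf H^{m-1}\|\,\|\mathbf H^{q}\|$ in Hilbert--Schmidt or operator norm, that the relevant moments of these norms are controlled by Lemma~\ref{norm0} without loss of powers of $n$, and that the only additional $v^{-1}$ factors arise from the resolvent itself. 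Once these bounds are in place, the remainder of the argument is the same bookkeeping as in the $m=2$ case.
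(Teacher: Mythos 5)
Your proposal is correct and follows essentially the same route as the paper: write the left-hand side as a sum over $q$ of covariances of the two trace-type functionals, apply Cauchy--Schwarz, and invoke the appendix variance bounds (Lemmas \ref{var0}--\ref{var2}) of order $C/(nv^4)$ for the normalized sums. The caveat you flag--that the variance lemma must be extended to handle the extra factor $\mathbf H^{q}$ to the right of $\mathbf R$--is real, but the paper treats it in exactly the same implicit way, simply citing Lemma \ref{var1} whose martingale argument carries over with the norms controlled by Lemma \ref{norm1}.
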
 
 
\begin{proof}Consider the first inequality. 
Applying H\"older's inequality, we get 
\begin{align} 
|B_2&+\sum_{q=0}^{m-1}f_{m-1,q}g_{m-1-q,0}|\notag\\&\le 
\sum_{q=0}^{m-1} 
\E^{\frac12}|\frac1n\sum_{j=1}^n ([\mathbf H^{m-1}\mathbf J\mathbf R\mathbf H^{q}]_{j,j+n}-\E\sum_{j=1}^n [\mathbf H^{m-1}\mathbf J\mathbf R\mathbf H^{q}]_{j,j+n})|^2 
\notag\\&\qquad\qquad\qquad\times\E^{\frac12}|\frac1n\sum_{j=1}^n ( [\mathbf H^{m-1-q}\mathbf J\mathbf R]_{j+n,j}-\E[\mathbf H^{m-1-q}\mathbf J\mathbf R]_{j+n,j})|^2. 
\notag 
\end{align} 
To conclude the proof of Lemma it is enough to use Lemmas \ref{var0} and \ref{var1}. 
The proof of the second inequality is similar. 
Thus the Lemma is proved. 
\end{proof} 
Note that 
\begin{equation}\label{auxiliary71} 
f_{00}=g_{00}=\frac1n\sum_{j=1}^n\E \mathbf R_{jj}=\frac1n\sum_{j=1}^n\E \mathbf R_{j+n,j+n}=s_n(z). 
\end{equation} 
By Lemma \ref{lem11} and  equality (\ref{auxiliary71}), we may write 
\begin{align}\label{a1} 
B_2+D_2&=-\frac12s_n(z)(f_{m-1,m-1}+g_{m-1,m-1})\notag\\&\qquad\qquad-\frac12\sum_{q=0}^{m-2}(f_{m-1,q}g_{m-1-q,0}+g_{m-1,q}f_{m-1-q,0}) +\varepsilon_n(z). 
\end{align} 
We consider now the behavior of the  coefficients $f_{\alpha,\beta}$,
$g_{\alpha,\beta}$, $t_{\alpha}$ and $u_{\alpha}$,
 for $\alpha,\beta=0,\ldots,m-1$. 
Applying Lemmas \ref{teilor} and \ref{var1}, we obtain the following relation for 
$\alpha>0$, $\beta>0$ 
\begin{align}\label{a2} 
 f_{\alpha,\beta}=-\sum_{q=0}^{m-1}f_{\alpha-1,q}t_{m-1+\beta-q}+f_{\alpha-1,\beta-1}+\varepsilon_n(z). 
\end{align} 
It is straightforward to check that for $q\ge m$ the following relation holds 
\begin{equation}\label{a3} 
 t_q=\frac 1n\sum_{j=1}^n[\mathbf H^q]_{j+n,j+n}+z\frac1n\sum_{j=1}^n[\mathbf H^{q-m}\mathbf R]_{j+n,j+n}=\delta_q+ 
zf_{q-m,0}+\varepsilon_n(z), 
\end{equation} 
where $\delta_0=1$ and $\delta_q=0$ for $q>0$. 
Using relation (\ref{a3}), we may rewrite (\ref{a2}) in the following form 
\begin{equation}\label{a4} 
f_{\alpha,\beta}=-z\sum_{q=0}^{\beta-1}f_{\alpha-1,q}f_{\beta-1-q,0} 
-\sum_{q=\beta}^{m-1}f_{\alpha-1,q}u_{m-1+\beta-q}+\varepsilon_n(z). 
\end{equation} 
For $\beta=0$, we get 
\begin{equation}\label{a5} 
f_{\alpha,0}=-\sum_{q=0}^{m-1}f_{\alpha-1,q}u_{m-1-q}+\varepsilon_n(z). 
\end{equation} 
Similar we obtain 
\begin{equation}\label{a6} 
g_{\alpha,\beta}=-z\sum_{q=0}^{\beta-1}g_{\alpha-1,q}g_{\beta-1-q,0} 
-\sum_{q=\alpha}^{m-1}g_{\alpha-1,q}t_{m-1+\beta-q}+\varepsilon_n(z). 
\end{equation} 
and 
\begin{equation}\label{a7} 
g_{\alpha,0}=-\sum_{q=0}^{m-1}g_{\alpha-1,q}t_{m-1-q}+\varepsilon_n(z). 
\end{equation} 
Applying Lemmas \ref{teilor} and \ref{var1}, we obtain a similar relation 
for $u_{\alpha}$ and $t_{\alpha}$, for $\alpha=0,\ldots,m-1$. 
\begin{equation}\label{a8} 
u_{\alpha}=-f_{\alpha-1,m-1}g_{0,0}-\sum_{q=0}^{m-2}f_{\alpha-1,q}g_{m-1-q,0}+\varepsilon_n(z), 
\end{equation} 
and 
\begin{equation}\label{a9} 
t_{\alpha}=-g_{\alpha-1,m-1}f_{0,0}-\sum_{q=0}^{m-2}g_{\alpha-1,q}f_{m-1-q,0}+\varepsilon_n(z). 
\end{equation} 
Denote by $\mathbf F$ (resp. $\mathbf G$) a $m-1\times m-1$ matrix with entries $F_{p,q}=f_{p-1,q-1}$ 
(resp. $\mathbf G_{p,q}=g_{p-1,q-1}$), $p,q=1\ldots,m$. 
Let $\mathbf t$ (resp. $\mathbf u$) denote a vector-column $(t_1,\ldots,t_{m-1})^T$ 
(resp $(u_1,\ldots, u_{m-1})^T$). Let $\mathbf f_{\alpha}=(f_{\alpha,0},,\ldots,f_{\alpha, \alpha-1},0,f_{\alpha, \alpha+1},\ldots,f_{\alpha, m-1})^T$ and 
$\mathbf g_{\alpha}=(g_{\alpha,0},\ldots,g_{\alpha, \alpha-1},0,g_{\alpha,
  \alpha+1},\ldots,g_{\alpha, m-1})^T$, for $\alpha=0,\ldots,m-2$. 
Introduce the  matrices 
\begin{equation}\notag 
 \mathbf M_u=\left(\begin{matrix}-u_0\quad\,-u_1\quad\ldots\quad-u_{m-3}\,-u_{m-2}\,\quad0 
\\-zf_{0,0}\,-u_0\quad\ldots\,\quad 
\,-u_{m-4}\,\quad0\,\,\quad-u_{m-2}\\ 
\ldots\\0\,-zf_{m-2,0}\,-zf_{m-3,0}\ldots\,-zf_{0,0}\,-u_0 
            \end{matrix}\right) 
\end{equation} 
and 
\begin{equation}\notag 
 \mathbf M_t=\left(\begin{matrix}-t_0\quad\,-t_1\quad\ldots\quad-t_{m-3}\,-t_{m-2}\,\quad0 
\\-zg_{0,0}\,-t_0\quad\ldots\,\quad 
\,-t_{m-4}\,\quad0\,\,\quad-t_{m-2}\\ 
\ldots\\0\,-zg_{m-2,0}\,-zg_{m-3,0}\ldots\,-zg_{0,0}\,-t_0 
            \end{matrix}\right) 
\end{equation} 
Let 
\begin{equation}\notag 
 \mathbf L=\left(\begin{matrix}0\,0 \ldots0\,1\\0\,0\ldots1\,0\\\ldots\\ 
1\,0\ldots0\,0 
               \end{matrix}\right) 
   \end{equation} 
We introduce as well the vectors $\mathbf y_{\alpha}=(-f_{0,\alpha-1},\ldots,-f_{0,1},0,-zf_{0,1},\ldots,-zf_{m-\alpha,0})^T$ and 
$\mathbf
w_{\alpha}=(-g_{0,\alpha-1},\ldots,-g_{0,1},0,-zg_{0,1},\ldots,-zg_{m-\alpha,0})^T$. 
We {shall denote by  $\mathbf r_n$ quantities} such that $\|\mathbf r_n(z)\|\le \frac {C\tau_n}{v^4}$. \\
Using  these notations we may rewrite the relations (\ref{a6})--({\ref{a9}) as follows, for $\alpha=1,\ldots,m$, 
\begin{equation}\label{b1} 
\mathbf g_{\alpha}= g_{\alpha-1,\alpha-1}\mathbf w_{\alpha}+\mathbf M_t\mathbf L\mathbf g_{\alpha-1}+\mathbf r_n(z),\quad 
\mathbf f_{\alpha}= f_{\alpha-1,\alpha-1}\mathbf y_{\alpha} 
+\mathbf M_u\mathbf L\mathbf f_{\alpha-1}+\mathbf r_n(z), 
\end{equation} 
and 
\begin{equation}\label{b2} 
\mathbf t=-s_n(z){\mathbf f}_{m-1} +\mathbf F\mathbf L\mathbf f_{0}+\mathbf r_n(z),\quad 
\mathbf u=-s_n(z) {\mathbf g}_{m-1} +\mathbf G\mathbf L\mathbf g_{0}+\mathbf r_n(z). 
\end{equation} 
Furthermore, we may represent the relations (\ref{a5}) and (\ref{a7})   as follows 
\begin{equation}\label{b3} 
 \mathbf f_0=-u_0{\mathbf f}_{m-1}+\mathbf F\mathbf L\mathbf u+\mathbf r_n(z),\quad 
\mathbf g_0=-t_0{\mathbf g}_{m-1}+\mathbf G\mathbf L\mathbf t+\mathbf r_n(z). 
\end{equation} 
\begin{lem}\label{k1} 
 Under the conditions of Theorem \ref{main} there exists a sufficiently large
 constant  $V_0$ such that for any $v\ge V_0$ we have 
\begin{equation}\notag 
 \max\{\|\mathbf u\|,\,\|\mathbf t\|,\|\mathbf f_{\alpha}\|, \|\mathbf g_{\alpha}\|\}\le 
\frac {C\tau_n}{v^4}. 
\end{equation} 
\end{lem}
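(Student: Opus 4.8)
The plan is to read the relations (\ref{b1})--(\ref{b3}) as a single fixed point equation $\mathbf x=\Phi(\mathbf x;z)+\mathbf r_n(z)$, where $\mathbf x$ is the vector collecting all coordinates of $\mathbf f_0,\dots,\mathbf f_{m-1}$, $\mathbf g_0,\dots,\mathbf g_{m-1}$, $\mathbf t$ and $\mathbf u$, and to show that for $v$ large the map $\Phi(\,\cdot\,;z)$ is a contraction on a ball around $0$ of radius of order $v^{-1}$; then $\norm{\mathbf x}\le 2\norm{\mathbf r_n(z)}\le C\tau_nv^{-4}$. It is enough to run this argument for $z$ in a fixed bounded box, say $V_0\le\im z\le 2V_0$ and $\abs{\re z}\le 1$: this is already an open set, which is all that is needed afterwards to pass (via the analogue of Lemma \ref{stieltjes} and Montel's theorem) to the convergence $s_n\to s$ on compacts. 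In particular $\abs z$ may be treated as a bounded quantity throughout.

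First I would record the crude a priori bounds. Every coordinate of $\mathbf x$ is a normalized trace of the form $\tfrac1n\E\Tr$ of a block of $\mathbf H^{a}\mathbf J\mathbf R\mathbf H^{b}$ with $a+b\le 2(m-1)$; since $\norm{\mathbf R}\le v^{-1}$ and, by Lemma \ref{norm0}, $\E\norm{\mathbf H}^{2k}\le C_k$ after truncation, each such coordinate is $O(v^{-1})$. The same holds for the ``multipliers'' $s_n(z)=f_{0,0}=g_{0,0}$, $t_0$, $u_0$, the diagonal entries $f_{\alpha,\alpha},g_{\alpha,\alpha}$, and hence for the operator norms of the fixed-size matrices $\mathbf F,\mathbf G$, all of whose entries are $O(v^{-1})$. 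Using the resolvent identity $\mathbf I+z\mathbf R=\mathbf V\mathbf R$ together with Lemma \ref{norm0} one also sees that $z$ times a diagonal resolvent block is $O(1)$; consequently the entries $-zf_{\beta,0},-zg_{\beta,0}$ of $\mathbf M_u,\mathbf M_t,\mathbf y_\alpha,\mathbf w_\alpha$ with $\beta=0$ are $O(1)$, whereas for $\beta\ge1$ they are $z$ times a coordinate of $\mathbf x$, hence $O(\abs z\,\norm{\mathbf x})$. On the box above and on the a priori ball this yields $\norm{\mathbf M_u},\norm{\mathbf M_t}\le C$ and $\norm{\mathbf y_\alpha},\norm{\mathbf w_\alpha}\le C\norm{\mathbf x}$.

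Next I would chase the estimates around the system. From (\ref{b2}), $\norm{\mathbf t}\le\abs{s_n(z)}\norm{\mathbf f_{m-1}}+\norm{\mathbf F}\,\norm{\mathbf f_0}+\norm{\mathbf r_n}\le (C/v)\max_\alpha\norm{\mathbf f_\alpha}+C\norm{\mathbf r_n}$, and likewise $\norm{\mathbf u}\le (C/v)\max_\alpha\norm{\mathbf g_\alpha}+C\norm{\mathbf r_n}$. Feeding this into (\ref{b1}): the coefficient of $\mathbf g_{\alpha-1}$ is the $\alpha$-independent matrix $\mathbf M_t\mathbf L$ of bounded norm and the inhomogeneity $g_{\alpha-1,\alpha-1}\mathbf w_\alpha$ is $O(v^{-1})\cdot O(\max_\alpha\norm{\mathbf g_\alpha})$, so a short induction on $\alpha=1,\dots,m-1$ closed with (\ref{b3}) gives $\max_\alpha\norm{\mathbf g_\alpha}\le (C/v)\norm{\mathbf t}+C\norm{\mathbf r_n}$, and symmetrically $\max_\alpha\norm{\mathbf f_\alpha}\le (C/v)\norm{\mathbf u}+C\norm{\mathbf r_n}$. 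Combining the four estimates traverses the cycle $\mathbf f\to\mathbf t\to\mathbf g\to\mathbf u\to\mathbf f$, each arrow costing a factor $C/v$, and produces $\max_\alpha\norm{\mathbf f_\alpha}\le Cv^{-4}\max_\alpha\norm{\mathbf f_\alpha}+C\norm{\mathbf r_n}$; for $v\ge V_0$ this forces $\max_\alpha\norm{\mathbf f_\alpha}\le C\norm{\mathbf r_n}\le C\tau_nv^{-4}$, and then $\mathbf t$, $\mathbf g_\alpha$ and $\mathbf u$ are $O(\norm{\mathbf r_n})$ as well.

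The main obstacle is the bookkeeping in the a priori step: not every multiplier in (\ref{b1})--(\ref{b3}) is $O(v^{-1})$ — the entries $-zf_{0,0}=-zs_n(z)$ and $-zg_{0,0}=-zs_n(z)$ are only $O(1)$, and they occupy the sub-diagonal of $\mathbf M_u,\mathbf M_t$. One must verify that these $O(1)$ multipliers sit only in nilpotent (``shift'') slots, so that iterating the recursion over $\alpha=0,\dots,m-1$ and then closing it with (\ref{b3}) and (\ref{b2}), where the genuinely $O(v^{-1})$ quantities $s_n(z),t_0,u_0,\mathbf F,\mathbf G$ appear, recovers the missing powers of $v^{-1}$; keeping $\abs z$ bounded is precisely what stops the terms $-zf_{\beta,0}$ $(\beta\ge1)$ and the vectors $\mathbf y_\alpha,\mathbf w_\alpha$ from destroying the contraction. (The structural reason all these quantities are negligible is a symmetry: for i.i.d.\ complex Gaussian $X_{jk}$ one has $\mathbf X\overset{d}{=}e^{i\theta}\mathbf X$, which multiplies each of $f_{\alpha,\beta},g_{\alpha,\beta}$ with $\alpha\ne\beta$ and each $t_\alpha,u_\alpha$ with $1\le\alpha\le m-1$ by a nontrivial phase $e^{ic\theta}$, forcing it to vanish identically; replacing general $X_{jk}$ by Gaussians through Lemma \ref{teilor} then costs only $O(\tau_n^av^{-b})$, giving an alternative route to the lemma valid for all $v\ge V_0$.)
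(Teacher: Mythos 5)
Your proposal is essentially the paper's own proof of Lemma \ref{k1}: the same estimate-chasing through the linear relations (\ref{b1})--(\ref{b3}), using the a priori bounds $|s_n(z)|,|t_0|,|u_0|\le v^{-1}$, $\|\mathbf F\|,\|\mathbf G\|\le C_m/v$, $\|\mathbf M_u\|,\|\mathbf M_t\|\le C_m$ and $\|\mathbf y_\alpha\|,\|\mathbf w_\alpha\|$ controlled by the unknowns, after which the self-terms are absorbed for $v\ge V_0$ and everything is dominated by $\|\mathbf r_n(z)\|\le C\tau_n v^{-4}$ (the paper closes the loop with a single gained factor $C_m/v$, which already suffices, rather than your $Cv^{-4}$). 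Two minor deviations: the paper does not confine $z$ to a bounded box, since it bounds $|zf_{\alpha,\beta}|$ and $|zg_{\alpha,\beta}|$ by $C_m(1+1/v)$ directly and thus gets the lemma for all $v\ge V_0$ as stated; and your a priori coordinate bounds should be justified via Lemma \ref{norm1} (moments of $\|\mathbf H^{\nu}\mathbf e_j\|_2$), because Lemma \ref{norm0} only yields $\E\|\mathbf X^{m}\|_2^2\le C_m n$ and not operator-norm moments $\E\|\mathbf H\|^{2k}\le C_k$, while the Gaussian rotation-invariance aside is only heuristic, as Lemma \ref{teilor} does not by itself provide the needed ensemble comparison.
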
 
 
\begin{proof} 
 First we note that, for $z=u+iv$ such that $v>0$ 
\begin{equation}\label{w1} 
 \|\mathbf w_{\alpha}\|+\|\mathbf y_{\alpha}\|\le C(\|\mathbf f_0\|+\|\mathbf g_0\|). 
\end{equation} 
Furthermore, by Lemma \ref{norm1} and inequality  $\|\mathbf R\|\le v^{-1}$, we have 
\begin{equation}\notag 
 \max\{\|\mathbf F\|,\|\mathbf G\|\}\le \frac {C_m}v. 
\end{equation} 
It is straightforward to check that 
\begin{equation}\notag 
 \max\{|zf_{\alpha,\beta}|, |zg_{\alpha,\beta}|\}\le C_m(1+\frac1v). 
\end{equation} 
The last inequalities imply that 
\begin{equation}\notag 
 \max\{\|\mathbf M_u\|,\|\mathbf M_t\|\}\le C_m(1+\frac1v). 
\end{equation} 
Relations (\ref{b2}), (\ref{b3}) together imply that 
\begin{equation}\label{t1} 
 \|\mathbf u\|+\|\mathbf t\|\le \frac {C_m}v(\|\mathbf f_{m-1}\|+\|\mathbf g_{m-1}\|)+\|\mathbf r_n(z)\|. 
\end{equation} 
Relation (\ref{b1}) implies that 
\begin{equation}\notag 
 \|\mathbf g_{m-1}\|+\|\mathbf f_{m-1}\|\le\frac {C_m}v\sum_{q=1}^{m-1}(\|\mathbf w_q\|+\|\mathbf y_q\|) 
+C(\|\mathbf g_0\|+\|\mathbf f_0\|)+\|r_n(z)\|. 
\end{equation} 
Applying now inequality (\ref{w1}), we get 
\begin{equation}\label{t2} 
 \|\mathbf g_{m-1}\|+\|\mathbf f_{m-1}\|\le C_m(\|\mathbf f_0\|+\|\mathbf g_0\|)+\|r_n(z)\|. 
\end{equation} 
Furthermore, relation (\ref{b3}) implies that 
\begin{equation}\label{t31} 
 \|\mathbf f_0\|+\|\mathbf g_0\|\le \frac {C_m}v(\|\mathbf g_{m-1}\|+\|\mathbf f_{m-1}\|)+\frac Cv(\|\mathbf u\|+\|\mathbf t\|)+\|r_n(z)\|. 
\end{equation} 
Inequalities (\ref{t1}), (\ref{t2}), (\ref{t31}) together imply 
\begin{equation}\label{t39} 
 \|\mathbf g_{m-1}\|+\|\mathbf f_{m-1}\|\le \frac {C_m}v(\|\mathbf g_{m-1}\|+\|\mathbf f_{m-1}\|)+\|r_n(z)\|. 
\end{equation} 
Choosing $v_0$ such that $\frac {C_m}v\le \frac14$, we obtain 
\begin{equation}\label{m-1} 
 \|\mathbf g_{m-1}\|+\|\mathbf f_{m-1}\|\le\frac {C_m\tau_n}{v^4}. 
\end{equation} 
Relation (\ref{t1}) implies now that 
\begin{equation}\notag 
 \|\mathbf u\|+\|\mathbf t\|\le \frac {C_m\tau_n}{v^4}. 
\end{equation} 
From relation (\ref{w1}) it follows that 
\begin{equation}\notag 
 \|\mathbf w_{\alpha}\|+\|\mathbf y_{\alpha}\|\le\frac {C\tau_n}{v^4}. 
\end{equation} 
Similar to inequality (\ref{m-1}) we get 
\begin{equation}\notag 
 \|\mathbf g_{\alpha}\|+\|\mathbf f_{\alpha}\|\le\frac {C\tau_n}{v^4}. 
\end{equation} 
Thus the Lemma is proved. 
 
 \end{proof} 
\begin{lem}\label{k2} 
 Under the conditions of Theorem \ref{main} we have 
\begin{equation} 
 f_{\alpha,\alpha}=-zs(z)f_{\alpha-1,\alpha-1}+\varepsilon_n(z). 
\end{equation} 
and 
\begin{equation} 
 g_{\alpha,\alpha}=-zs(z)g_{\alpha-1,\alpha-1}+\varepsilon_n(z). 
\end{equation} 
\end{lem}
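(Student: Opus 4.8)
The plan is to read both identities off the recursions (\ref{a4}) and (\ref{a6})--(\ref{a7}) by isolating, on each right-hand side, the single summand that Lemma \ref{k1} does not force to be negligible. For the first identity I would set $\beta=\alpha$ in (\ref{a4}), obtaining
\[
f_{\alpha,\alpha}=-z\sum_{q=0}^{\alpha-1}f_{\alpha-1,q}\,f_{\alpha-1-q,0}-\sum_{q=\alpha}^{m-1}f_{\alpha-1,q}\,u_{m-1+\alpha-q}+\varepsilon_n(z).
\]
In the first sum the term $q=\alpha-1$ equals $-z\,f_{\alpha-1,\alpha-1}f_{0,0}=-z\,s_n(z)\,f_{\alpha-1,\alpha-1}$ by (\ref{auxiliary71}); this is the claimed main term. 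For each remaining $q\in\{0,\ldots,\alpha-2\}$ one has $q\neq\alpha-1$ and $1\le\alpha-1-q\le m-2$, so $f_{\alpha-1,q}$ is an entry of the vector $\mathbf f_{\alpha-1}$ and $f_{\alpha-1-q,0}$ is an entry of $\mathbf f_{\alpha-1-q}$, both of size $O(\tau_n v^{-4})$ by Lemma \ref{k1}; thus every such term is $O(\tau_n^2 v^{-8})$. In the second sum $q\ge\alpha>\alpha-1$, so $f_{\alpha-1,q}$ is again an entry of $\mathbf f_{\alpha-1}$, while the index $m-1+\alpha-q$ runs through $\{\alpha,\ldots,m-1\}\subset\{1,\ldots,m-1\}$, so $u_{m-1+\alpha-q}$ is an entry of $\mathbf u$; again both factors are $O(\tau_n v^{-4})$, so every term is $O(\tau_n^2 v^{-8})$. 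Adding up, and using that on the compact subsets of the upper half-plane we work with the extra factor $z$ only contributes a bounded multiple, I would conclude $f_{\alpha,\alpha}=-z\,s_n(z)\,f_{\alpha-1,\alpha-1}+\varepsilon_n(z)$.

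The identity for $g$ is proved identically from (\ref{a6})--(\ref{a7}): with $\beta=\alpha$, the $q=\alpha-1$ term of the first sum in (\ref{a6}) is $-z\,g_{\alpha-1,\alpha-1}g_{0,0}=-z\,s_n(z)\,g_{\alpha-1,\alpha-1}$ by (\ref{auxiliary71}), and every other term is a product of entries of $\mathbf g_{\alpha-1}$, $\mathbf g_{\alpha-1-q}$ or $\mathbf t$, each of size $O(\tau_n v^{-4})$ by Lemma \ref{k1} and hence negligible. Finally, since $|f_{\alpha-1,\alpha-1}|$ and $|g_{\alpha-1,\alpha-1}|$ are $O(v^{-1})$, replacing $s_n(z)$ by $s(z)$ alters each right-hand side by $O(v^{-1}|s_n(z)-s(z)|)$, which is again of the form $\varepsilon_n(z)$ once the general-$m$ analogue of Lemma \ref{stieltjes} is available; in any event, it is $s_n(z)$ rather than $s(z)$ that is actually needed to derive (\ref{main31}).

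The only delicate point is the index bookkeeping: one must check that the ``diagonal'' index $\alpha-1$ appears exactly once across the two sums --- precisely at $q=\alpha-1$ in the first sum and never in the second, where $q\ge\alpha$ --- so that exactly one summand survives and every other summand carries at least one factor taken from a vector controlled by Lemma \ref{k1}. Beyond that, it is the routine observation that a product of two quantities of size $\tau_n v^{-4}$ remains in the class $\varepsilon_n(z)$ after multiplication by the locally bounded factor $z$.
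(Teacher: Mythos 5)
Your proposal is correct and follows essentially the same route as the paper: set $\beta=\alpha$ in (\ref{a4}) (resp.\ (\ref{a6})), split off the $q=\alpha-1$ term $-zf_{0,0}f_{\alpha-1,\alpha-1}$ with $f_{0,0}=s_n(z)$ by (\ref{auxiliary71}), and absorb every remaining summand into $\varepsilon_n(z)$ via Lemma \ref{k1}, since each carries a factor from $\mathbf f_{\alpha-1}$, $\mathbf f_{\alpha-1-q}$, $\mathbf u$ (resp.\ $\mathbf g$-vectors and $\mathbf t$). Your side remark that the main term naturally comes out with $s_n(z)$ rather than $s(z)$, and that $s_n(z)$ is what is actually used to derive (\ref{main31}), correctly identifies what is only a notational slip in the paper's statement and proof.
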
 
\begin{proof} {We shall consider the first equality only, the proof of the
    other one being similar}. 
By relation (\ref{a4}), we have 
\begin{equation} 
 f_{\alpha,\alpha}=-zf_{0,0}f_{\alpha-1,\alpha-1} 
-z\sum_{q=0}^{\alpha-2}f_{\alpha-1,q}f_{\alpha-1-q,0} 
-\sum_{q=\alpha}^{m-1}f_{\alpha-1,q}u_{m-1+\beta-q}+\varepsilon_n(z). 
\end{equation} 
This equality implies that 
\begin{equation} 
 f_{\alpha,\alpha}=-zs(z)f_{\alpha-1,\alpha-1}+\theta|z|\|\mathbf f_{\alpha-1}\|\|\mathbf f_0\|+\varepsilon_n(z). 
\end{equation} 
Applying Lemma \ref{k1} we conclude the proof. 
\end{proof} 
Equality (\ref{a1}) and Lemma \ref{k2} together imply 
\begin{equation} 
 1+zs_n(z)=-s(z)(f_{m-1,m-1}+g_{m-1,m-1})+\varepsilon_n(z)=(-1)^mz^{m-1}s_n^{m+1}(z)+\varepsilon_n(z) 
\end{equation} 
We rewrite that last equation as follows 
\begin{equation} 
 1+zs_n(z)+(-1)^{m-1}z^{m-1}s_n^{m+1}(z)=\varepsilon_n(z) 
\end{equation} 
The Stieltjes transform $s(z)$ satisfies the equation 
\begin{equation} 
 1+zs(z)+(-1)^{m-1}z^{m-1}s^{m+1}(z)=0 
\end{equation} 
The last two relations together imply that, for $v\ge V_0$ 
\begin{equation} 
 |s_n(z)-s(z)|\le \frac{|\varepsilon_n(z)|}{|z+(-z)^{m-1}\sum_{q=0}^ms^q(z)s_n^{m-q}(z)|} 
\end{equation} 
Note that 
\begin{equation} 
 \max\{|zs(z)|,\,|zs_n(z)|\}\le C(1+\frac1v) 
\end{equation} 
and 
\begin{equation} 
 \max\{|s_n(z)|,\,|s(z)|\}\le \frac1v 
\end{equation} 
Applying these inequality, we obtain 
\begin{equation} 
|(-z)^{m-1}\sum_{q=0}^ms^q(z)s_n^{m-q}(z)| \le \frac Cv. 
\end{equation} 
We may choose $V_1\ge V_0$ such that for any $v\ge V_1$ 
\begin{equation} 
 \frac Cv\le \frac v2. 
\end{equation} 
This implies that for $v\ge V_1$ 
\begin{equation} 
 |\im\{z+(-z)^{m-1}\sum_{q=0}^ms^q(z)s_n^{m-q}(z)\}|\ge \frac v2 
\end{equation} 
and 
\begin{equation}\label{k5} 
|s_n(z)-s(z)|\le \frac{C\tau_n}{v^4}. 
\end{equation} 
From inequality (\ref{k5}) we conclude that there exists 
 an open set with non-empty interior such that $s_n(z)$ converges to $s(z)$ on this set. 
 The Stieltjes transform of these 
random variables is an analytic function on $\mathcal C^+$ 
and locally bounded ($|s_n(z)|\le v^{-1}$ for any $v>0$). 
By Montel's Theorem (see, for instance, 
\cite{Conway}, p. 153, Theorem 2.9) the  convergence of  $s_n(z)$ to $s(z)$
is uniform on  any compact set in the 
upper half-plane $\mathcal K\subset \mathcal C^+$. 
  This implies that $\Delta_n\to 0$ as $n\to\infty$. Thus the proof  of Theorem \ref{main} in 
the general case is complete.

\section{Appendix} 
\subsection{Frobenius norms of powers of truncated matrices} 
Recall that we consider truncated independent random variable $X_{jk}$ satisfying 
\begin{equation}\label{as1} 
 |\E X_{jk}|\le\frac1{n^{\frac32}\tau_n},\quad \E|X_{jk}|^2=1+\frac{\theta_{jk}M}{n^2\tau_n^2}, 
\quad |X_{jk}|\le \tau_n\sqrt n, 
\end{equation} 
where $\tau_n\to 0$  as $n\to\infty$ converges to zero  {as slowly as  needed}. 
 
We would like to investigate   the behavior of the Frobenius norm of powers $\|\mathbf
X^{m}\|_2$
 of the random matrix $\mathbf X=\frac1{\sqrt n}(X_{jk})_{j,k=1}^n$. 
We formulate the following 
\begin{lem}\label{norm0} 
Let $X_{jk}^{(n)}$ be independent random variables for $1\le j,k\le n$ 
and assume that  (\ref{as1}) holds. 
Then for any $m\ge 1$ and any $\nu=0,\ldots , m$, there exists a constant $C_m>0$ depending on $m$ such that 
\begin{equation}\label{norm101} 
\E\|\mathbf X^{\nu}(\mathbf X-\E\mathbf X)^{m-\nu}\|_2^2\le C_mn. 
\end{equation} 
\end{lem}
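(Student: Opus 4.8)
Here is how I would prove Lemma~\ref{norm0}.

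The plan is to use the method of moments. Since $\|\mathbf A\|_2^2=\Tr(\mathbf A\mathbf A^*)$, write $\mathbf Y:=\mathbf X-\E\mathbf X$, so that by (\ref{as1}) one has $|[\mathbf X]_{ab}|\le\tau_n$, $\E|X_{ab}|^2\le 2$, $|\E X_{ab}|\le n^{-3/2}\tau_n^{-1}$, and the same bounds for $\mathbf Y$ with in addition $\E[\mathbf Y]_{ab}=0$. Then the left--hand side of (\ref{norm101}) equals
\[
\E\Tr\big(\mathbf X^{\nu}\mathbf Y^{m-\nu}(\mathbf Y^*)^{m-\nu}(\mathbf X^*)^{\nu}\big)=\sum_{i_1,\dots,i_{2m}=1}^{n}\ \E\prod_{\ell=1}^{2m}[\mathbf A_\ell]_{i_\ell i_{\ell+1}},
\]
where indices are read cyclically ($i_{2m+1}:=i_1$) and $\mathbf A_1=\cdots=\mathbf A_\nu=\mathbf X$, $\mathbf A_{\nu+1}=\cdots=\mathbf A_m=\mathbf Y$, $\mathbf A_{m+1}=\cdots=\mathbf A_{2m-\nu}=\mathbf Y^*$, $\mathbf A_{2m-\nu+1}=\cdots=\mathbf A_{2m}=\mathbf X^*$. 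Each factor equals $n^{-1/2}$ times one of $X_{i_\ell i_{\ell+1}}$, $\overline{X_{i_{\ell+1}i_\ell}}$ (or their centered analogues); I say the factor \emph{uses} the corresponding entry $X_{ab}$.

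Next I would estimate a fixed term by grouping its $2m$ factors according to the entry $X_{ab}$ they use. By independence the expectation factorizes over these groups. A group formed by a single factor of $\mathbf Y$- or $\mathbf Y^*$-type contributes $\E[\mathbf Y]_{ab}=0$ and annihilates the term; hence in every surviving term a singleton group is of $\mathbf X$- or $\mathbf X^*$-type, and then contributes at most $n^{-1/2}|\E X_{ab}|\le n^{-2}\tau_n^{-1}$. A group of size $k\ge2$ is bounded, using $\E|X_{ab}|^{k}\le(\tau_n\sqrt n)^{k-2}\E|X_{ab}|^{2}$, by $C_k\tau_n^{k-2}/n$ (the $k$ normalizing factors $n^{-1/2}$ included). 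Thus a term with $r$ distinct indices, $e$ distinct entries, $s$ singleton groups and $d$ groups of sizes $k_1,\dots,k_d\ge2$ --- so that $e=s+d$ and $s+\sum_j k_j=2m$ --- has modulus at most $C^{d}n^{-2s-d}\tau_n^{2(m-e)}$, while the number of index assignments realizing a given coincidence pattern of the $i_\ell$ is at most $n^{r}$.

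The final step is the counting estimate. The edges $\{i_\ell,i_{\ell+1}\}$, $\ell=1,\dots,2m$, of the closed walk form a connected graph on the $r$ visited vertices, so $e\ge r-1$, i.e. $r\le s+d+1$; and $k_j\ge2$ forces $2d\le 2m-s$, hence $2(m-e)\ge-s$. Substituting, the contribution of a shape is at most $C^{d}\,n^{\,r-2s-d}\tau_n^{2(m-e)}\le C^{m}\,n^{1-s}\tau_n^{-s}=C^{m}\,n\,(n\tau_n)^{-s}\le C^{m}n$ for $n$ large, since $\tau_n$ may be taken to tend to $0$ slowly enough that $n\tau_n\ge1$. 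Because the number of shapes (partitions of the $2m$ positions into groups, together with the induced coincidence pattern of the indices) is bounded by a constant depending only on $m$, summing over shapes yields (\ref{norm101}).

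I expect the main obstacle to be precisely this bookkeeping: keeping the exponent of $n$ at most $1$ by playing the connectivity inequality $e\ge r-1$ against the $n^{-1/2}$ normalization, while ensuring that low-multiplicity ($\mathbf X$-type) edges and the tiny means $\E X_{ab}$ never cause a loss. This is exactly where the variance/boundedness trade-off $\E|X_{ab}|^{k}\le(\tau_n\sqrt n)^{k-2}\E|X_{ab}|^{2}$ and the bound $|\E X_{ab}|\le n^{-3/2}\tau_n^{-1}$ enter; the grouping by entry already absorbs all complex-conjugation subtleties, so non-identical distributions cause no additional difficulty.
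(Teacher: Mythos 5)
Your argument is correct, and its combinatorial core coincides with the paper's proof of the centered case $\nu=0$: expand the Frobenius norm as a sum over closed walks, group the $2m$ factors by the entry $X_{ab}$ they use, kill singleton centered factors by $\E\widetilde X_{ab}=0$, bound higher groups via the truncation $|X_{ab}|\le\tau_n\sqrt n$ (the paper's inequality (\ref{mom2})), and count index tuples with $\mu$ distinct edges by $Cn^{\mu+1}$ through connectivity of the walk (the paper's (\ref{mom3})), which is exactly your $e\ge r-1$, $n^{r}$ count. Where you genuinely diverge is the treatment of the uncentered factors for $1\le\nu\le m$: you keep everything inside one expansion and absorb singleton $\mathbf X$-type groups using $|\E X_{ab}|\le n^{-3/2}\tau_n^{-1}$, checking that the resulting factor $n^{-2}\tau_n^{-1}$ more than pays for the extra vertex the singleton edge may create (your bound $C^m n(n\tau_n)^{-s}$), which needs only the harmless assumption $n\tau_n\ge1$, consistent with the paper's choice of $\tau_n$ tending to zero slowly. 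The paper instead reduces to the centered case: it writes $\mathbf X^{\nu}$ as a sum of products of $\mathbf A=\E\mathbf X$ and $\mathbf X-\mathbf A$, uses $\|\mathbf A\|_2^2\le CM/(n^2\tau_n^6)$ together with Frobenius-norm submultiplicativity and induction (inequalities (\ref{mom5})--(\ref{mom7})). Your route is more self-contained and avoids that decomposition/induction step, at the price of heavier bookkeeping (mixed centered/uncentered groups, singleton edges) inside the walk count; the paper's route keeps the walk count clean (all multiplicities at least $2$) but requires the separate norm argument for the mean matrix. Both are valid; do state explicitly the standing assumptions $\tau_n\le1$ and $n\tau_n\ge1$ and note that mixed groups containing $\widetilde X_{ab}$ satisfy the same truncation bound up to a factor $2$, which you use implicitly.
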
 
\begin{proof} 
We start with the case $\nu=0$. Consider the  matrix
 $\widetilde {\mathbf X}:=\mathbf X-\E \mathbf X=\frac1{\sqrt n}(\widetilde
 X_{jk})_{j,k=1}^n$
 and the norms of the powers of this  matrix. 
We may write 
\begin{align}\label{mom1} 
\E\|\widetilde {\mathbf X}\|_2^2=n^{-m}\sum_{j,k=1}^n\sum_{j_1,\ldots,j_{m-1}=1}^n\sum_{j'_1,\ldots,j'_{m-1}=1}^n&\E\widetilde X_{jj_1}\widetilde X_{j_1j_2}\cdots\widetilde X_{j_{m-2}j_{m-1}}\widetilde X_{j_{m-1}k}\notag\\&\overline{\widetilde X}_{jj_1'}\,\overline{\widetilde X}_{j_1'j_2'}\cdots\overline{\widetilde X}_{j_{m-2}'j_{m-1}'}\,\overline{\widetilde X}_{j_{m-1}'k}. 
\end{align} 
Here for any number $a=u+\sqrt{-1}v$, $\overline a=u-\sqrt{-1}v$ denotes the complex conjugate. 
The product in the right hand side of (\ref{mom1}) {involves  $\mu$
  different } 
(with respect to complex conjugates) terms, say $X^{\varepsilon}_{l_1,l_1'},\ldots, 
X^{\varepsilon}_{l_{\mu},l_{\mu}'}$, with multiplicities $m_1,\ldots, m_{\mu}$, 
where $\varepsilon=\pm$ and 
\begin{equation}X^{\varepsilon}_{jk}=\begin{cases} 
{X_{jk},\quad\text{if}\quad \varepsilon=+}\\{\overline X_{jk},\quad\text{if}\quad \varepsilon=-}\end{cases} 
\end{equation} 
Note that $m_1+\cdots+m_{\mu}=2m$ and if $\min\{m_1,\ldots, m_{\mu}\}=1$ 
then expectation of corresponding product equals 0 
since $\E X^{\varepsilon}_{j,l}=0$ for any $j,l=1,\ldots,n$.
 This implies that {non zero terms occur for} $\mu\le m$ 
and $\min\{m_1,\ldots, m_{\mu}\}\ge 2$ {only}. 
By assumption (\ref{as1}), we have 
\begin{equation}\label{mom2} 
 |\E\widetilde X_{jj_1}\widetilde X_{j_1j_2}\cdots\widetilde X_{j_{m-2}j_{m-1}}\widetilde X_{j_{m-1}k}\overline{\widetilde X}_{jj_1'}\,\overline{\widetilde X}_{j_1'j_2'}\cdots\overline{\widetilde X}_{j_{m-2}'j_{m-1}'}\,\overline{\widetilde X}_{j_{m-1}'k}|\le n^{m-\mu}\tau_n^{2(m-\mu)}. 
\end{equation} 
The cardinality  $\mathcal N(l_1,\ldots,l_m,l_1',\ldots,l_m')$ of  the set of
indices 
with $\mu$ different edges $l_{\nu},l_{\nu}'$  and multiplicities $m_1,\ldots,
m_\mu$ respectively
satisfies the inequality 
\begin{equation}\label{mom3} 
\mathcal N(l_1,\ldots,l_m,l_1',\ldots,l_m')\le Cn^{\mu+1}. 
\end{equation} 
The representation (\ref{mom1}) and the inequalities (\ref{mom2} and (\ref{mom3}) together 
imply 
\begin{equation}\label{mom100} 
 \E\|{\widetilde{\mathbf X}}^m\|_2^2\le C_mn 
\end{equation} 
Assume  now that $1\le \nu\le m$. 
Consider the quantity 
\begin{equation} 
 \Gamma_n^{(\nu)}=\E\|\mathbf X^{\nu}(\mathbf X-\E\mathbf X)^{m-\nu}\|_2^2. 
\end{equation} 
Let $\mathbf A=\E\mathbf X$. 
It is straightforward to check that 
\begin{equation}\label{mom5} 
 \Gamma_n^{(\nu)}\le C_m\sum_{\alpha=0}^{\nu}(\E\|\mathbf A\|_2^2)^{\alpha}\E\|(\mathbf X-\mathbf A)^{m-\alpha}\|_2^2. 
\end{equation} 
To prove (\ref{mom5}) we consider the representation 
\begin{equation} 
 \mathbf X^{\nu}={\sum}^*\mathbf A^{m_1}(\mathbf X-\mathbf A)^{m_1'} 
\cdots\mathbf A^{m_{\nu}}(\mathbf X-\mathbf A)^{m_{\nu}'}, 
\end{equation} 
where $\sum^*$ stands for sum  over all indices $m_1,\ldots, m_k,m_1',\ldots,m_{\nu}'\ge0$ 
such that \newline$ 
m_1+\cdots+m_{\nu}+m_1'+\cdots+m_{\nu}'=\nu$. 
This implies the bound 
\begin{equation}\label{mom6} 
 \Gamma_n^{(\nu)}\le C_m{\sum}^* 
\E\|\mathbf A^{m_1}(\mathbf X-\mathbf A)^{m_1'} 
\cdots\mathbf A^{m_{\nu}}(\mathbf X-\mathbf A)^{m_{\nu}'}\|_2^2. 
\end{equation} 
Using that for any matrices $\mathbf A$ and $\mathbf B$ we have $\|\mathbf A\mathbf B\|_2= 
\|\mathbf B\mathbf A\|_2$ and that $\|\mathbf A^{\nu}\|_2\le \|\mathbf
A\|_2^{\nu}$,
 we get from  this inequality the bound (\ref{mom5}). 
By assumption (\ref{as1}), we have 
\begin{equation}\label{mom7} 
 \|\mathbf A\|_2^2\le \frac{CM}{n^2\tau_n^6}. 
\end{equation} 
Inequalities (\ref{mom6}), (\ref{mom7}), 
(\ref{mom100}) and the induction assumption together conclude the proof of the
Lemma.

\end{proof} 
 
} 
We shall use the following obvious  bounds 
\begin{lem}\label{norm1} 
Let $X_{jk}^{(n)}$ be independent random variables for $1\le j,k\le n$. 
assume that  (\ref{as1}) holds and that  $\E X_{jk}=0$. 
Then for any $m,r\ge1$ and any $\nu=0,\ldots , m$, and any $j=1,\ldots,n$,
 there exists a constant $C(m,r)>0$ depending on $m,\,r$ such that 
\begin{equation}\label{norm102} 
\max\{\E\|\mathbf X^{\nu}\mathbf e_j\|_2^{2r},\,\E\|\mathbf e_j^T\mathbf X^{\nu}\|_2^{2r}\}\le C(m,r). 
\end{equation} 
\end{lem}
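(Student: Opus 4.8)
The plan is to follow the moment-expansion scheme already used in the proof of Lemma \ref{norm0}. The case $\nu=0$ is trivial, since $\mathbf X^0\mathbf e_j=\mathbf e_j$ has norm $1$; so I assume $1\le\nu\le m$. Writing the squared Euclidean norm of the $j$-th column of $\mathbf X^\nu$ as
$$\|\mathbf X^\nu\mathbf e_j\|_2^2=\sum_{k=1}^n\bigl|[\mathbf X^\nu]_{kj}\bigr|^2=[(\mathbf X^*)^\nu\mathbf X^\nu]_{jj},$$
we have $\E\|\mathbf X^\nu\mathbf e_j\|_2^{2r}=\E\bigl([(\mathbf X^*)^\nu\mathbf X^\nu]_{jj}\bigr)^r$. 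Expanding as in (\ref{mom1}) and recalling that $[\mathbf X]_{pq}=n^{-1/2}X_{pq}$, this equals $n^{-\nu r}$ times a sum of expectations of products of $2\nu r$ entries $X^{\varepsilon}_{pq}$ (with $\varepsilon=\pm$ recording complex conjugation), the summands being indexed by a union of $r$ closed walks of edge-length $2\nu$, all of them based at the fixed vertex $j$.

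Three estimates then close the argument, just as in Lemma \ref{norm0}. First, since $\E X_{jk}=0$ and the entries are independent, a summand contributes a nonzero expectation only if every distinct unordered pair $\{p,q\}$ occurring in it (an ``edge'') appears with multiplicity at least $2$; writing $\mu$ for the number of distinct edges, this forces $\mu\le\nu r$. Second, for such a summand with edge multiplicities $m_1,\dots,m_\mu$ (so $\sum_i m_i=2\nu r$ and $m_i\ge2$), independence together with the truncation bound $|X_{pq}|\le\tau_n\sqrt n\le\sqrt n$ and $\E|X_{pq}|^2\le2$ give
$$\bigl|\E(\mathrm{product})\bigr|\le\prod_{i=1}^\mu 2\,(\sqrt n)^{m_i-2}=2^\mu n^{\nu r-\mu}.$$
Third, the number of index tuples producing a fixed combinatorial shape with $\mu$ edges is at most $n^\mu$: the associated multigraph is connected because all $r$ walks pass through $j$, hence it has at most $\mu+1$ vertices, one of which is the fixed index $j$; and the number of admissible shapes is bounded by a constant $C(m,r)$ depending on $m$ and $r$ only (this is exactly the count behind (\ref{mom3})).

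Combining the three bounds,
$$\E\|\mathbf X^\nu\mathbf e_j\|_2^{2r}\le n^{-\nu r}\sum_{\mu=1}^{\nu r}C(m,r)\,n^{\mu}\cdot2^{\mu}n^{\nu r-\mu}=C(m,r)\sum_{\mu=1}^{\nu r}2^{\mu}\le C_1(m,r),$$
which is the claim; the bound for $\E\|\mathbf e_j^T\mathbf X^\nu\|_2^{2r}$ follows in the same way after transposing, since $\mathbf e_j^T\mathbf X^\nu$ is the $j$-th row of $\mathbf X^\nu$. I expect the only fussy point to be the combinatorial bookkeeping in the third estimate --- making precise that the union of closed walks through $j$ is connected, so that it has at most $\mu$ free vertices, and that the number of shapes depends on $m,r$ alone --- but this is the same bookkeeping that already supports (\ref{mom3}) in the proof of Lemma \ref{norm0}, so no new idea is needed.
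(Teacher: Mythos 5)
Your proposal is correct and follows essentially the same route as the paper: expand $\E\|\mathbf X^{\nu}\mathbf e_j\|_2^{2r}$ as a sum over products of entries indexed by walks based at the fixed vertex $j$, kill terms containing an edge of multiplicity one using $\E X_{jk}=0$, bound the surviving expectations by the truncation $|X_{jk}|\le\tau_n\sqrt n$ together with the second-moment bound, and count at most $Cn^{\mu}$ index tuples per shape since $j$ is fixed, exactly the scheme behind (\ref{mom20}) and (\ref{mom30}). The only cosmetic difference is that you carry the normalization $n^{-\nu r}$ explicitly, which if anything makes the bookkeeping cleaner than the paper's.
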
 
\begin{proof} 
Let 
\begin{equation} 
 \Gamma_{\nu,j}=\|\mathbf X^{\nu}\mathbf e_j\|_2. 
\end{equation} 
We may write 
\begin{align} 
\Gamma_{\nu,j}^2=\sum_{k_1,\ldots,k_r=1}^n\sum_{j_1^{(1)},\ldots,j_{\nu-1}^{(1)}=1}^n& 
\sum_{{j_1^{(1)}}',\ldots,{j_{\nu-1}^{(1)}}'=1}^n\cdots\sum_{j_1^{(r)},\ldots,j_{\nu-1}^{(r)}=1}^n\notag\\&\times\sum_{{j_1^{(r)}}',\ldots,{j_{\nu-1}^{(r)}}'=1}^n 
\prod_{q=1}^r A(k_q,j_1^{(q)},\ldots,j_{\nu-1}^{(q)})\overline A(k_q,{j_1^{(q)}}',\ldots,{j_{\nu-1}^{(q)}}'), 
\end{align} 
where 
\begin{equation}A(k_q,j_1^{(q)},\ldots,j_{\nu-1}^{(q)})= 
X_{k_qj_1^{(q)}}X_{j_1^{(q)}j_2^{(1)}}\cdots X_{j_{\nu-2}^{(q)}j_{\nu-1}^{(q)}}X_{j_{\nu-1}^{(q)}j}. 
\end{equation} 
Assume that  the set of indices 
$\mathcal N=\cup_{q=1}^r \{\{k_q,j_1^{(q)},\ldots,j_{\nu-1}^{(q)}\}\cup{j_1^{(q)}}',\ldots,{j_{\nu-1}^{(q)}}'\}$ 
 {consists of}  $\mu$ different 
 {pairs}, say ${l_1,l_1'},\ldots, 
{l_{\mu},l_{\mu}'}$, with multiplicities $m_1,\ldots, m_{\mu}$ respectively. 
Note that $m_1+\cdots+m_{\mu}=2qr$ and if $\min\{m_1,\ldots, m_{\mu}\}=1$ then 
the  corresponding term  equals 0,  
since $\E X_{j,l}=0$ for any $j,l=1,\ldots,n$. This implies that $\mu\le m$ 
and $\min\{m_1,\ldots, m_{\mu}\}\ge 2$. 
By assumption (\ref{as1}), we have 
\begin{equation}\label{mom20} 
 |\E A(k_q,j_1^{(q)},\ldots,j_{\nu-1}^{(q)})\overline A(k_q,{j_1^{(q)}}',\ldots,{j_{\nu-1}^{(q)}}'|\le C(\tau_n\sqrt n)^{2mr-2\mu}. 
\end{equation} 
The cardinality  $\mathcal N(l_1,\ldots,l_m,l_1',\ldots,l_m')$ of the
 set of indices with $\mu$ different edges $l_{\nu},l_{\nu}$ 
and multiplicities $m_1,\ldots, m_\mu$ satisfies the inequality 
\begin{equation}\label{mom30} 
\mathcal N(l_1,\ldots,l_m,l_1',\ldots,l_m')\le Cn^{\mu}. 
\end{equation} 
The representation (\ref{mom1}) and the inequalities (\ref{mom20} and (\ref{mom30}) together 
imply 
\begin{equation}\label{mom1001} 
 \E\|\mathbf X^{\nu}\mathbf e_j\|_2^{2r}\le C(m,r) 
\end{equation} 
The bound of  $\E\|\mathbf e_j^T\mathbf X^{\nu}\|_2^{2r}$ is  similar. 
Thus, lemma is proved. 
\end{proof}

\begin{lem}\label{var0}Under the conditions of Theorem \ref{main} we have 
\begin{equation} 
\E|\frac1n(\Tr \mathbf R-\E\Tr \mathbf R)|^2\le \frac C{nv^2}. 
\end{equation} 
\end{lem}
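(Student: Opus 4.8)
The plan is to prove this concentration estimate by the standard martingale-difference (Burkholder) decomposition of $\Tr\mathbf R$ along the rows of $\mathbf X$; no moment or truncation assumptions beyond the independence of the entries are needed. The structural fact I would use is that $\mathbf V=\mathbf H^m\mathbf J$ is Hermitian: since $\mathbf H=\mathrm{diag}(\mathbf X,\mathbf X^*)$ one has $\mathbf H^m=\mathrm{diag}(\mathbf X^m,(\mathbf X^m)^*)$, so $\mathbf V$ is block anti-diagonal with blocks $\mathbf X^m$ and $(\mathbf X^m)^*$; in particular $\norm{\mathbf R}\le v^{-1}$ deterministically and the spectral distribution function $F^{\mathbf V}$ is real-valued. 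I would also record the elementary rank inequality: if $\mathbf A,\mathbf B$ are Hermitian with $\mathrm{rank}(\mathbf A-\mathbf B)\le r$, then, using the bound $\norm{F^{\mathbf A}-F^{\mathbf B}}_\infty\le r/N$ together with integration by parts and $\int_{\Real}\frac{dx}{(x-u)^2+v^2}=\pi/v$,
$$\abs{\Tr(\mathbf A-z\mathbf I)^{-1}-\Tr(\mathbf B-z\mathbf I)^{-1}}\le \frac{\pi r}{v}.$$

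Next I would set up the martingale. Fix $z=u+iv$ with $v>0$, put $\mathcal F_k=\sigma(X_{ij}:1\le i\le k,\ 1\le j\le n)$ and $\E_k[\,\cdot\,]=\E[\,\cdot\mid\mathcal F_k]$, and write $\Tr\mathbf R-\E\Tr\mathbf R=\sum_{k=1}^n\gamma_k$ with $\gamma_k:=(\E_k-\E_{k-1})\Tr\mathbf R$. For each $k$ let $\mathbf X^{(k)}$ be $\mathbf X$ with its $k$-th row set to zero, and let $\mathbf H^{(k)}$, $\mathbf V^{(k)}=(\mathbf H^{(k)})^m\mathbf J$, $\mathbf R^{(k)}=(\mathbf V^{(k)}-z\mathbf I)^{-1}$ be the induced matrices. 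Since $\mathbf R^{(k)}$ does not depend on the $k$-th row of $\mathbf X$, we have $\E_k\Tr\mathbf R^{(k)}=\E_{k-1}\Tr\mathbf R^{(k)}$, hence $\gamma_k=(\E_k-\E_{k-1})(\Tr\mathbf R-\Tr\mathbf R^{(k)})$ and it suffices to bound $\abs{\Tr\mathbf R-\Tr\mathbf R^{(k)}}$. Here I would use that $\mathbf X-\mathbf X^{(k)}$ has rank $\le1$, so $\mathbf H-\mathbf H^{(k)}$ has rank $\le2$, and then expand $\mathbf H^m-(\mathbf H^{(k)})^m$ into the words of length $m$ in the letters $\mathbf H^{(k)}$ and $\mathbf H-\mathbf H^{(k)}$ that contain at least one factor $\mathbf H-\mathbf H^{(k)}$: each such word has rank $\le2$, there are $2^m-1$ of them, so $\mathrm{rank}(\mathbf V-\mathbf V^{(k)})\le r_m:=2^{m+1}$. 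Since $\mathbf V,\mathbf V^{(k)}$ are both Hermitian, the rank inequality gives $\abs{\Tr\mathbf R-\Tr\mathbf R^{(k)}}\le \pi r_m v^{-1}$ almost surely, and therefore $\abs{\gamma_k}\le 2\pi r_m v^{-1}$ a.s.

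Finally, the $\gamma_k$ form a (complex) martingale-difference sequence, so by orthogonality
$$\E\abs{\Tr\mathbf R-\E\Tr\mathbf R}^2=\sum_{k=1}^n\E\abs{\gamma_k}^2\le n\Big(\frac{2\pi r_m}{v}\Big)^2=\frac{C_m\,n}{v^2},$$
and dividing by $n^2$ yields $\E\abs{\tfrac1n(\Tr\mathbf R-\E\Tr\mathbf R)}^2\le C_m n^{-1}v^{-2}$, which is the asserted bound. The only step requiring genuine attention is the rank estimate $\mathrm{rank}(\mathbf V-\mathbf V^{(k)})\le r_m$, i.e.\ the fact that a rank-one perturbation of $\mathbf X$ perturbs $\mathbf V=\mathbf H^m\mathbf J$ by a matrix of rank bounded uniformly in $n$ (depending only on $m$); the rest is the textbook resolvent-martingale argument. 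The companion Lemma \ref{var1} for the off-diagonal bilinear forms would be proved along the same lines, the extra factors of the quadratic-form type being controlled by Lemma \ref{norm1}.
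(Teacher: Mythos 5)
Your proposal is correct and follows essentially the same route as the paper: leave-one-row-out matrices $\mathbf X^{(k)}$, the rank inequality $|\Tr(\mathbf A-z\mathbf I)^{-1}-\Tr(\mathbf B-z\mathbf I)^{-1}|\le C\,\mathrm{rank}(\mathbf A-\mathbf B)/v$ for the Hermitian matrices $\mathbf V=\mathbf H^m\mathbf J$ and $\mathbf V^{(k)}$, and Girko's martingale-difference decomposition with orthogonality. The only cosmetic differences are that you bound $\mathrm{rank}(\mathbf V-\mathbf V^{(k)})$ by $2^{m+1}$ via a word expansion where the paper uses the telescoping identity to get a bound linear in $m$ (both are uniform in $n$, which is all that is needed), and you spell out the final orthogonality step that the paper leaves implicit.
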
 
\begin{proof}Consider the matrix $\mathbf X^{(j)}$ obtained from the matrix $\mathbf 
X$  by replacing the  entries of the  $j$-th row by zeros. 
We define the following matrices 
\begin{equation} \mathbf H^{(j)}=\left(\begin{matrix}{\mathbf X^{(j)}\quad 
\mathbf O}\\{\mathbf O\quad {\mathbf X^{(j)}}^*}\end{matrix}\right), \qquad 
\widehat {\mathbf H}^{(j)}=\mathbf H^{(j)}\mathbf J. 
\end{equation} 
We shall use the following inequality.
 For any Hermitian matrix $\mathbf A$ and $\mathbf B$ 
with spectral distribution function $F_A(x)$ and $F_B(x)$ respectively, 
we have 
\begin{equation}\label{trace} 
|\Tr (\mathbf A-z\mathbf I)^{-1}-\Tr (\mathbf B-z\mathbf I)^{-1}|\le 
\frac {\text{\rm rank}(\mathbf A-\mathbf B)}{v}. 
\end{equation} 
It is straightforward to show that 
\begin{equation}\label{rank} 
\text{\rm rank}(\mathbf H^q{\mathbf J}-{\mathbf H^{(j)}}^q\mathbf J)\le 4q. 
\end{equation} 
Inequalities  (\ref{trace}) and (\ref{rank}) together imply 
\begin{equation} 
|\frac1{2n}(\Tr \mathbf R-\Tr \mathbf R^{(j)})|\le \frac C{nv}. 
\end{equation} 
After this remark 
we may apply a well-known  martingale expansion techniques suggested  already by Girko 
\cite{Girko:89}. We may introduce 
$\sigma$-algebras $\mathcal F_{j}=\sigma\{X_{lk},\, 
j< l\le n, k=1,\ldots,n\}$ and use the representation 
\begin{equation}\notag 
\Tr\mathbf R-\E\Tr\mathbf R=\sum_{\nu=1}^m\sum_{j=1}^{n}(\E_{j-1}\Tr\mathbf R-\E_{j}\Tr\mathbf R), 
\end{equation} 
where $\E_{j}$ denotes conditional  expectation given  $\sigma$-algebra $\mathcal F_{j}$. 
\end{proof} 
\begin{lem}\label{var1} 
Under the  conditions of Theorem \ref{main} we have, for $q\ge 1$ 
\begin{equation} 
\E|\frac1n(\sum_{j=1}^n[\mathbf H^q\mathbf J\mathbf R]_{jj+n}-\E\sum_{j=1}^n[\mathbf H^q\mathbf J\mathbf R]_{jj+n})|^2\le \frac C{n v^4}. 
\end{equation} 
\end{lem}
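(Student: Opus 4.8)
The plan is to prove the concentration inequality in Lemma~\ref{var1} by the same martingale difference technique already used in the proof of Lemma~\ref{var0}. Write $T:=\frac1n\sum_{j=1}^n[\mathbf H^q\mathbf J\mathbf R]_{j,j+n}$ and introduce the $\sigma$-algebras $\mathcal F_\ell=\sigma\{X_{sk}:\ell<s\le n,\ 1\le k\le n\}$ together with the conditional expectations $\E_\ell(\cdot)=\E(\cdot\mid\mathcal F_\ell)$. Then $T-\E T=\sum_{\ell=1}^n\gamma_\ell$ with $\gamma_\ell:=(\E_{\ell-1}-\E_\ell)T$, and since the $\gamma_\ell$ form a martingale difference sequence, $\E|T-\E T|^2=\sum_{\ell=1}^n\E|\gamma_\ell|^2$. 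So it suffices to bound $\E|\gamma_\ell|^2$ by $C/(n^3v^4)$, uniformly in $\ell$.

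For the bound on each $\gamma_\ell$, I would use the standard fact that $\E_{\ell-1}-\E_\ell$ annihilates any quantity that is $\mathcal F_\ell$-measurable. Let $\mathbf X^{(\ell)}$ be $\mathbf X$ with the $\ell$-th row replaced by zeros and form $\mathbf H^{(\ell)}$, $\widehat{\mathbf H}^{(\ell)}=\mathbf H^{(\ell)}\mathbf J$, $\mathbf V^{(\ell)}=(\mathbf H^{(\ell)})^m\mathbf J$ and $\mathbf R^{(\ell)}=(\mathbf V^{(\ell)}-z\mathbf I)^{-1}$, exactly as in Lemma~\ref{var0}. Writing $T^{(\ell)}:=\frac1n\sum_{j=1}^n[(\mathbf H^{(\ell)})^q\mathbf J\mathbf R^{(\ell)}]_{j,j+n}$, the quantity $T^{(\ell)}$ is $\mathcal F_\ell$-measurable (it does not depend on row $\ell$ of $\mathbf X$), hence $(\E_{\ell-1}-\E_\ell)T^{(\ell)}=0$ and therefore $\gamma_\ell=(\E_{\ell-1}-\E_\ell)(T-T^{(\ell)})$. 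Since $\E_{\ell-1}-\E_\ell$ is a contraction on $L^2$ up to a constant, $\E|\gamma_\ell|^2\le 4\,\E|T-T^{(\ell)}|^2$, and the whole problem reduces to showing $\E|T-T^{(\ell)}|^2\le C/(n^3v^4)$.

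To estimate $T-T^{(\ell)}$ I would split it using the resolvent identity $\mathbf R-\mathbf R^{(\ell)}=-\mathbf R(\mathbf V-\mathbf V^{(\ell)})\mathbf R^{(\ell)}$ together with the telescoping identity $\mathbf H^q-(\mathbf H^{(\ell)})^q=\sum_{p=0}^{q-1}\mathbf H^p(\mathbf H-\mathbf H^{(\ell)})(\mathbf H^{(\ell)})^{q-1-p}$ and its analogue for $\mathbf V=\mathbf H^m\mathbf J$, which all involve the low-rank perturbation $\mathbf H-\mathbf H^{(\ell)}$ (rank $\le 4$, supported on rows/columns indexed by $\ell$ and $\ell+n$). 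Each of the resulting terms is a trace of a product of resolvents (bounded in operator norm by $v^{-1}$) with powers of $\mathbf H$ and one rank-$\le4$ factor, so a term of the form $\frac1n\Tr\big[(\cdots)(\mathbf e_\ell\mathbf e_\ell^T+\text{similar})(\cdots)\big]$ can be written as a sum of at most a bounded number of scalars $\frac1n[(\cdots)]_{\ell,\cdot}[(\cdots)]_{\cdot,\ell}$. Applying the Cauchy--Schwarz inequality to such a scalar and then Lemma~\ref{norm1} (which controls $\E\|\mathbf X^\nu\mathbf e_\ell\|_2^{2r}$ and $\E\|\mathbf e_\ell^T\mathbf X^\nu\|_2^{2r}$ by constants depending only on $m,r$) yields, after also using $\|\mathbf R\|,\|\mathbf R^{(\ell)}\|\le v^{-1}$, a bound of order $C/(n^2v^4)$ for $\E|T-T^{(\ell)}|^2$ — note the extra factor $n^{-2}$ comes from the $\frac1n$ in the definition of $T$ together with the single column/row restriction. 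Summing the $n$ martingale differences then gives $\E|T-\E T|^2\le C/(nv^4)$, which is the claim.

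The main obstacle will be the bookkeeping in the last step: after expanding $T-T^{(\ell)}$ via the two telescoping identities and the resolvent identity there are many terms, and for each one must recognize the correct "broken'' index $\ell$, isolate the row-vector and column-vector pieces $\mathbf e_\ell^T\mathbf H^\nu(\cdots)$ and $(\cdots)\mathbf H^\nu\mathbf e_\ell$, and match them with the moment bounds of Lemma~\ref{norm1}. The powers of $z$ appearing in $\mathbf V=\mathbf H^m\mathbf J$ versus $\mathbf H^q\mathbf J\mathbf R$ need to be tracked so that the $v^{-4}$ (rather than a worse negative power) is genuinely achieved; here one uses that each resolvent contributes $v^{-1}$ and that the factors $z\mathbf R$ that arise are handled through the identity $\mathbf I+z\mathbf R=\mathbf V\mathbf R$ so no uncontrolled factor of $|z|$ survives. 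Once the structure is set up, each individual term is a routine Cauchy--Schwarz plus Lemma~\ref{norm1} estimate, entirely parallel to the proof of Lemma~\ref{var0} but with a trace of a genuine matrix product rather than a plain resolvent trace.
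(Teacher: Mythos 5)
Your proposal follows essentially the same route as the paper's proof: a row-wise martingale difference decomposition, comparison with the row-deleted matrices $\mathbf H^{(j)},\mathbf R^{(j)}$, telescoping plus the resolvent identity to reduce the difference to rank-bounded perturbations supported on row/column $j$, and then Cauchy--Schwarz combined with Lemma~\ref{norm1} and $\|\mathbf R\|\le v^{-1}$. The only (harmless) slip is the stated target $C/(n^3v^4)$ for $\E|\gamma_\ell|^2$ in your first paragraph; what you actually obtain, and all that is needed, is $C/(n^2v^4)$ per martingale difference, which after summing the $n$ terms gives the claimed bound $C/(nv^4)$.
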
 
\begin{proof} 
We introduce the matrices 
$\mathbf X^{(j)}=\mathbf X-\mathbf e_j\mathbf e_j^T\mathbf X$, and $\mathbf H^{(j)}=\mathbf H-\mathbf e_j\mathbf e_j^T\mathbf H-\mathbf H\mathbf e_{j+n}\mathbf e_{j+n}^T$. 
Note that the matrix $\mathbf X^{(j)}$ is obtained 
from the matrix $\mathbf X$ by replacing the entries of the $j$-th row by $0$. 
Consider the quantity 
\begin{equation} 
S_j:=\sum_{k=1}^n[\mathbf H^q\mathbf J\mathbf R]_{kk+n}-\sum_{k=1}^n[{\mathbf H^{(j)}}^q\mathbf J\mathbf R^{(j)}]_{kk+n}. 
\end{equation} 
Using equality 
\begin{align} 
\mathbf H^q\mathbf J\mathbf R-{\mathbf H^{(j)}}^q\mathbf J\mathbf R^{(j)}&=\sum_{\nu=0}^{q-1}{\mathbf H^{(j)}}^{\nu}(\mathbf H-\mathbf H^{(j)}){\mathbf H}^{q-1-\nu} 
\mathbf J\mathbf R 
\notag\\&+\sum_{\nu=0}^{p-1}{\mathbf H^{(j)}}^q\mathbf J\mathbf R^{(j)}\mathbf H^{\nu}(\mathbf H-\mathbf H^{(j)})\mathbf H^{p-1-\nu}\mathbf J\mathbf R, 
\end{align} 
we get 
\begin{align} 
S_j=S_j^{(1)}+S_j^{(2)}, 
\end{align} 
where 
\begin{align} 
S_j^{(1)}&=\sum_{\nu=0}^{q-1}\sum_{k=1}^n[{\mathbf H^{(j)}}^{\nu}(\mathbf H-\mathbf H^{(j)}){\mathbf H}^{q-1-\nu} 
\mathbf J\mathbf R]_{kk+n} 
\notag\\S_j^{(2)}=&\sum_{\nu=0}^{p-1}\sum_{k=1}^n[{\mathbf H^{(j)}}^q\mathbf J\mathbf R^{(j)}\mathbf H^{\nu}(\mathbf H-\mathbf H^{(j)})\mathbf H^{p-1-\nu}\mathbf J\mathbf R]_{kk+n}. 
\end{align} 
Applying now that 
\begin{equation} 
\mathbf H-\mathbf H^{(j)}=\mathbf e_j\mathbf e_j^T\mathbf H+\mathbf H\mathbf e_{j+n}\mathbf e_{j+n}^T, 
\end{equation} 
we obtain 
\begin{align}\label{m1} 
\sum_{k=1}^n[{\mathbf H^{(j)}}^{\nu}(\mathbf H-\mathbf H^{(j)}){\mathbf H}^{q-1-\nu} 
\mathbf J\mathbf R]_{kk+n}&=\Tr\widehat{\mathbf J}{\mathbf H^{(j)}}^{\nu}(\mathbf e_j\mathbf e_j^T\mathbf H+\mathbf H\mathbf e_{j+n}\mathbf e_{j+n}^T){\mathbf H}^{q-1-\nu} 
\mathbf J\mathbf R\notag\\ 
=[{\mathbf H}^{q-\nu}& 
\mathbf J\mathbf R\mathbf J{\mathbf H^{(j)}}^{\nu}]_{jj}+[{\mathbf H}^{q-1-\nu} 
\mathbf J\mathbf R\mathbf J{\mathbf H^{(j)}}^{\nu}\mathbf H]_{j+nj+n}|. 
\end{align} 
Here 
\begin{equation} 
 \widehat J=\left(\begin{matrix}{\mathbf O\qquad\mathbf I}\\{\mathbf O\qquad\mathbf O}\end{matrix}\right). 
\end{equation} 
Equality (\ref{m1}) implies that 
\begin{equation} 
|S_j^{(1)}|\le \sum_{\nu=0}^{q-1}|[{\mathbf H}^{q-\nu} 
\mathbf J\mathbf R\mathbf J{\mathbf H^{(j)}}^{\nu}]_{jj}|+|[{\mathbf H}^{q-1-\nu} 
\mathbf J\mathbf R\mathbf J{\mathbf H^{(j)}}^{\nu}\mathbf H]_{j+nj+n}|. 
\end{equation} 
Using  H\"older's  inequality, we get 
\begin{equation} 
\E |S_{j}^{(1)}|^2\le\frac{C}{v^2}\sum_{\nu=0}^{q-1}(\E\|\mathbf e_j^T\mathbf H^{q-\nu}\|_2^{2}\|{\mathbf H^{(j)}}^{\nu} 
\mathbf e_j\|_2^2+\E\|\mathbf e_{j+n}{\mathbf H}^{q-1-\nu}\|_2^2\|{\mathbf H^{(j)}}^{\nu}\mathbf H\mathbf e_{j+n}\|_2^2) 
\end{equation} 
Lemma \ref{norm1} and H\"older's inequality together imply 
\begin{equation}\label{m2} 
\E |S_j^{(1)}|\le \frac C{v^2}. 
\end{equation} 
Similar we get 
\begin{equation}\label{m3} 
\E|S_j^{(2)}|^2\le \frac {C}{v^4}. 
\end{equation} 
Inequalities (\ref{m2}) and (\ref{m3}) together imply 
\begin{equation} 
 \E|S_j|^2\le \frac {C(v^2+1)}{v^4}. 
\end{equation} 
 
Let $\mathcal F_j$ denote the $\sigma$-algebra generated by $X_{lk}$, for $1\le l\le j, 1\le k\le n$. 
Denote by $\E_j$ the conditional  expectation with respect to $\sigma$-algebra $\mathcal F_j$. 
We may write 
\begin{align} 
\E|\frac1n(\sum_{j=1}^n&[\mathbf H^q\mathbf J\mathbf R]_{jj+n}-\E\sum_{j=1}^n[\mathbf H^q\mathbf J\mathbf R]_{jj+n})|^2\notag\\&= 
\frac1{n^2}\sum_{j=1}^n\E|\E_j\sum_{k=1}^n[\mathbf H^q\mathbf J\mathbf R]_{kk+n}-\E_{j-1}\sum_{k=1}^n[\mathbf H^q\mathbf J\mathbf R]_{kk+n})|^2\notag\\&\le 
\frac1{n^2}\sum_{j=1}^n\E|\sum_{k=1}^n[\mathbf H^q\mathbf J\mathbf R]_{kk+n}-\sum_{k=1}^n[{\mathbf H^{(j)}}^q\mathbf J\mathbf R^{(j)}]_{kk+n})|^2\notag\\&\le 
\frac{C(1+v^2)}{ nv^4}. 
\end{align} 
Thus the Lemma is proved. 
\end{proof} 
\begin{lem}\label{var2}Under the conditions of Theorem \ref{main} the following inequality holds 
\begin{equation} 
\E|\frac1n(\sum_{j=1}^n\mathbf R_{j,j+n}-\E\sum_{j=1}^n\mathbf R_{j,j+n})|^2\le \frac C{nv^4}. 
\end{equation} 
\end{lem}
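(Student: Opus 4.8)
\emph{Proof sketch.} This is the $q=0$ analogue of Lemma~\ref{var1} (which is stated only for $q\ge 1$), and the plan is to reproduce, in a slightly simpler form, the martingale argument already used for Lemmas~\ref{var0} and~\ref{var1}; it is cleaner here because there is no prefactor $\mathbf H^q$. First I would rewrite the linear statistic as $\sum_{j=1}^n\mathbf R_{j,j+n}=\Tr(\mathbf P\mathbf R)$, where $\mathbf P$ is the fixed $2n\times 2n$ matrix with $[\mathbf P]_{j+n,j}=1$ for $j=1,\dots,n$ and all other entries $0$, so that $\|\mathbf P\|\le 1$. Exactly as in the proof of Lemma~\ref{var1}, introduce the row-deleted matrices $\mathbf X^{(j)}=\mathbf X-\mathbf e_j\mathbf e_j^T\mathbf X$, $\mathbf H^{(j)}=\mathbf H-\mathbf e_j\mathbf e_j^T\mathbf H-\mathbf H\mathbf e_{j+n}\mathbf e_{j+n}^T$, $\mathbf V^{(j)}=(\mathbf H^{(j)})^m\mathbf J$, $\mathbf R^{(j)}=(\mathbf V^{(j)}-z\mathbf I)^{-1}$, and set $S_j:=\Tr(\mathbf P\mathbf R)-\Tr(\mathbf P\mathbf R^{(j)})=\Tr\bigl(\mathbf P(\mathbf R-\mathbf R^{(j)})\bigr)$.

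The first step is a deterministic $L^2$-bound on $S_j$. By the resolvent identity $\mathbf R-\mathbf R^{(j)}=-\mathbf R(\mathbf V-\mathbf V^{(j)})\mathbf R^{(j)}$, the telescoping identity for $\mathbf H^m-(\mathbf H^{(j)})^m$, and $\mathbf H-\mathbf H^{(j)}=\mathbf e_j\mathbf e_j^T\mathbf H+\mathbf H\mathbf e_{j+n}\mathbf e_{j+n}^T$, one gets
\[
\mathbf V-\mathbf V^{(j)}=\sum_{\nu=0}^{m-1}\bigl[(\mathbf H^{(j)})^\nu\mathbf e_j\bigr]\bigl[\mathbf e_j^T\mathbf H^{m-\nu}\mathbf J\bigr]+\sum_{\nu=0}^{m-1}\bigl[(\mathbf H^{(j)})^\nu\mathbf H\mathbf e_{j+n}\bigr]\bigl[\mathbf e_{j+n}^T\mathbf H^{m-1-\nu}\mathbf J\bigr],
\]
a sum of at most $2m$ rank-one terms $\mathbf u_i\mathbf w_i^T$. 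By cyclicity of the trace $S_j=-\sum_i\mathbf w_i^T\mathbf R^{(j)}\mathbf P\mathbf R\,\mathbf u_i$, and since $\|\mathbf R\|,\|\mathbf R^{(j)}\|\le v^{-1}$ and $\|\mathbf P\|\le 1$,
\[
|S_j|\le\frac1{v^2}\sum_i\|\mathbf u_i\|_2\,\|\mathbf w_i\|_2.
\]
Each $\mathbf u_i,\mathbf w_i$ is a coordinate column/row of a product of powers of the two matrices $\mathbf H$ and $\mathbf H^{(j)}$, whose entries still satisfy (\ref{as1}); hence Cauchy--Schwarz together with Lemma~\ref{norm1} (taken with $r=2$; its moment computation applies verbatim to products of such slightly different matrices) gives $\E\|\mathbf u_i\|_2^4\le C_m$ and $\E\|\mathbf w_i\|_2^4\le C_m$, so that $\E|S_j|^2\le C_m v^{-4}$.

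The second step is the martingale sum. With $\mathcal F_j=\sigma\{X_{lk}:1\le l\le j,\ 1\le k\le n\}$ and $\E_j=\E(\cdot\mid\mathcal F_j)$, the matrix $\mathbf R^{(j)}$ does not involve the $j$-th row of $\mathbf X$, so $\E_j\Tr(\mathbf P\mathbf R^{(j)})=\E_{j-1}\Tr(\mathbf P\mathbf R^{(j)})$ and therefore $\E_j\Tr(\mathbf P\mathbf R)-\E_{j-1}\Tr(\mathbf P\mathbf R)=\E_jS_j-\E_{j-1}S_j$. These increments form a martingale difference sequence, so by orthogonality and Jensen's inequality
\[
\E\Bigl|\frac1n\Bigl(\sum_{j=1}^n\mathbf R_{j,j+n}-\E\sum_{j=1}^n\mathbf R_{j,j+n}\Bigr)\Bigr|^2=\frac1{n^2}\sum_{j=1}^n\E|\E_jS_j-\E_{j-1}S_j|^2\le\frac4{n^2}\sum_{j=1}^n\E|S_j|^2\le\frac C{nv^4},
\]
which is the assertion.

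I do not expect a genuine obstacle here: the scheme is a straightforward reuse of Lemmas~\ref{var0}--\ref{var1}. The one point that needs a little attention is the $L^2$-control of the norms $\|(\mathbf H^{(j)})^\nu\mathbf e_j\|_2$, $\|(\mathbf H^{(j)})^\nu\mathbf H\mathbf e_{j+n}\|_2$, $\|\mathbf e_j^T\mathbf H^{m-\nu}\mathbf J\|_2$ and so on, which are products of powers of the two nearly-identical matrices $\mathbf H$ and $\mathbf H^{(j)}$ applied to a coordinate vector; one must note that the moment estimate in Lemma~\ref{norm1} never uses that a single matrix is raised to a power, and that the truncation (\ref{conditions})/(\ref{as1}) is precisely what makes these moments finite. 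One should also check that the operator-norm sandwich really yields the factor $v^{-4}$ and not a worse power of $v^{-1}$ — this is immediate here because, unlike in Lemma~\ref{var1}, only one copy of the resolvent sits on each side of $\mathbf V-\mathbf V^{(j)}$.
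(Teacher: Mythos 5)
Your proposal is correct and follows essentially the same route as the paper: delete the $j$-th row, expand $\mathbf R-\mathbf R^{(j)}$ via the resolvent identity and the telescoping of $\mathbf H^m-(\mathbf H^{(j)})^m$ into rank-one terms, bound the resulting increment by $v^{-2}$ times norms of coordinate rows/columns of powers of $\mathbf H$, $\mathbf H^{(j)}$ controlled by Lemma~\ref{norm1}, and conclude with the martingale-difference decomposition as in Lemmas~\ref{var0} and~\ref{var1}. The $\Tr(\mathbf P\mathbf R)$ packaging is only a notational variant of the paper's entrywise sum.
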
 
\begin{proof} 
The proof is similar to the proof of the previous lemma. 
We have 
\begin{equation} 
\sum_{k=1}^n\mathbf R_{kk+n}-\sum_{k=1}^n\mathbf R^{(j)}_{kk+n}=\sum_{\nu=0}^{p-1}\sum_{k=1}^n[\mathbf R^{(j)}{\mathbf H^{(j)}}^{\nu}(\mathbf e_j\mathbf e_j^T\mathbf H+\mathbf H\mathbf e_{j+n}\mathbf e_{j+n}^T)\mathbf H^{p-1-\nu}\mathbf R]_{kk+n}. 
\end{equation} 
Applying H\"older's inequality and  inequality $\max\{\|\mathbf R\|,\|\mathbf R^{(j)}\|\}\le v^{-1}$, we get 
\begin{align} 
|\sum_{k=1}^n\mathbf R_{kk+n}-\sum_{k=1}^n\mathbf R^{(j)}_{kk+n}|&\le 
\frac1{v^2}\sum_{\nu=0}^{p-1} \|{\mathbf H^{(j)}}^{\nu}\mathbf e_j\|_2 
\|\mathbf e_{j}^T\mathbf H^{p-\nu}\|_2\notag\\&+\frac1{v^2}\sum_{\nu=0}^{p-1} \|\mathbf e_{j+n}^T\mathbf H^{p-1-\nu}\|_2\|{\mathbf H^{(j)}}^{\nu}\mathbf H\mathbf e_{j+n}\|_2. 
\end{align} 
Using H\"older inequality and Lemma \ref{norm1}, we get 
\begin{align} 
\E|\sum_{k=1}^n\mathbf R_{kk+n}-\sum_{k=1}^n\mathbf R^{(j)}_{kk+n}|^2 
\le \frac{C_m}{v^4}. 
\end{align} 
To conclude the proof it is enough to use the martingale expansion of
the {difference}
\newline $\sum_{k=1}^nR_{kk+n}-\sum_{k=1}^n\E R_{kk+n}$ similar to previous lemma. 
 
\end{proof} 

\begin{lem}\label{derivatives} 
Under the conditions of Theorem \ref{main} we have, for $0\le \mu, \nu\le m$,
that there exists a constant  $C_m$ depending on $m$ such that 
\begin{equation} 
\left|n^{-\frac32}\sum_{j,k=1}^n\E (X_{jk}+X_{jk}^3)\left[\frac{\partial ^{2}(\mathbf H^{\nu}\mathbf J\mathbf R\mathbf H^{\mu})}{\partial X_{jk}^{2}}(\theta_{jk}X_{jk})\right]_{kj}\right| 
\le \frac{C_m(1+v)}{\sqrt n v^{3}}, 
\end{equation} 
where $\theta_{jk}$ and $X_{jk}$ are mutually independent   $j,k=1,\ldots,n$,  and 
$\theta_{jk}$ are uniformly distributed on the unit interval. 
By $\frac{\partial^2}{\partial {X_{jk}}^2}\mathbf A(\theta_{jk}X_{jk})$ we
denote 
the matrix obtained from 
$\frac{\partial^2}{\partial {X_{jk}}^2}\mathbf A$
 by replacing the  entries 
$X_{jk}$ by $\theta_{jk}X_{jk}$. 
 
\end{lem}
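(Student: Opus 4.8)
The plan is to bound the sum by a second-order Taylor-remainder estimate, exploiting the truncation bound $|X_{jk}|\le \tau_n\sqrt n$ and the moment assumptions. First I would write out the second derivative $\partial^2(\mathbf H^{\nu}\mathbf J\mathbf R\mathbf H^{\mu})/\partial X_{jk}^2$ explicitly using the product rule together with the derivative formulas $\partial\mathbf H/\partial X_{jk}=n^{-1/2}(\mathbf e_j\mathbf e_k^T+\mathbf e_{k+n}\mathbf e_{j+n}^T)$ and $\partial\mathbf R/\partial X_{jk}=-n^{-1/2}\mathbf R(\ldots)\mathbf H\mathbf J\mathbf R-n^{-1/2}\mathbf R\mathbf H(\ldots)\mathbf R$ as in (\ref{auxiliary21})--(\ref{auxiliary41}). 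Each differentiation produces a factor $n^{-1/2}$, so the second derivative carries a prefactor $n^{-1}$; collecting this with the outer $n^{-3/2}$ gives $n^{-5/2}$ before summing over the $n^2$ pairs $(j,k)$. The second derivative is a finite sum (with constant $C_m$ many terms depending only on $m$) of products of the form $\mathbf H^{a}\mathbf J\mathbf R\cdots\mathbf R\mathbf H^{b}$ sandwiched between coordinate vectors $\mathbf e_j,\mathbf e_k$, each such term having at most three resolvent factors and hence an operator-norm factor $v^{-3}$.

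Next I would take absolute values inside the expectation, use $\E|X_{jk}+X_{jk}^3|\le \E|X_{jk}|+\E|X_{jk}|^3 \le C(M)\,\tau_n\sqrt n$ (the cubic moment being controlled by $|X_{jk}|\le\tau_n\sqrt n$ times the second moment), pull out $\|\mathbf R\|\le v^{-1}$ for each resolvent, and reduce the $[\,\cdot\,]_{kj}$ entry of each matrix product to a bilinear form $|\mathbf e_k^T\mathbf H^{a}\mathbf J\cdots\mathbf H^{b}\mathbf e_j|$, which by Cauchy--Schwarz is at most $\|\mathbf e_k^T\mathbf H^{a}\|_2\cdot v^{-3}\cdot\|\mathbf H^{b}\mathbf e_j\|_2$ up to constants. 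Then I would invoke Lemma \ref{norm1} with $r=1$ (or $r=2$ after one more Cauchy--Schwarz in the expectation, to decouple the two vector-norm factors from $X_{jk}$ itself) to get $\E\|\mathbf H^{a}\mathbf e_j\|_2^2\le C(m)$ and similarly for the row version. Summing the resulting bound over $j,k=1,\ldots,n$ yields $C_m\,\tau_n\sqrt n\cdot n^{-5/2}\cdot n^2\cdot v^{-3}(1+v) = C_m(1+v)\tau_n\,n^{-1/2}v^{-3}$, which is the claimed estimate (absorbing the harmless $\tau_n\le 1$).

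The main obstacle is bookkeeping: organizing the second derivative so that every one of the $C_m$ terms is genuinely of the form ``row-vector $\times$ (powers of $\mathbf H$, copies of $\mathbf J$, at most three $\mathbf R$'s) $\times$ column-vector'' with the coordinate indices $j,k,j+n,k+n$ landing on the outside, so that Lemma \ref{norm1} applies cleanly. Care is needed because differentiating $\mathbf R$ twice can place two $(\mathbf e_j\mathbf e_k^T+\mathbf e_{k+n}\mathbf e_{j+n}^T)$ blocks inside the product, producing extra inner scalar factors like $[\mathbf H\mathbf J\mathbf R]_{jk}$ or $\mathbf R_{j+n,k}$; each such scalar is again bounded by a product of two vector norms times $v^{-1}$, so the power of $v$ never exceeds $v^{-3}$ and the number of $\mathbf H$-vector norms to be estimated stays bounded in terms of $m$. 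Once one checks that the worst case is exactly three resolvents and that the total $n$-power works out to $n^{-1/2}$, the inequality follows. The interchange of $\partial/\partial X_{jk}$ with expectation and the legitimacy of evaluating at $\theta_{jk}X_{jk}$ are justified because all entries are bounded, so $\mathbf R$ and its derivatives are bounded and smooth in $X_{jk}$ on the relevant range.
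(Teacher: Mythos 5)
Your structural plan matches the paper's proof: write the second derivative explicitly (the paper's families $\mathbf P_i^{(a)},\mathbf T_i^{(c)},\mathbf U_i^{(b)}$), note the $n^{-1}$ prefactor and at most three resolvents per term, reduce each $[\,\cdot\,]_{kj}$ entry to a bilinear form via $|[\mathbf A\mathbf B]_{kj}|\le\|\mathbf e_k^T\mathbf A\|_2\,\|\mathbf B\mathbf e_j\|_2$, and control the vector norms by Lemma \ref{norm1}. The gap is in how you handle the weights $X_{jk}+X_{jk}^3$. You bound $\E|X_{jk}|^3\le \tau_n\sqrt n\,\E|X_{jk}|^2=\tau_n\sqrt n$ (and the same loss appears if you instead use Cauchy--Schwarz, since $\E^{1/2}|X_{jk}|^6\le \sqrt M\,\tau_n\sqrt n$). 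With that factor the correct arithmetic is
\begin{equation}
\tau_n\sqrt n\cdot n^{-\frac52}\cdot n^{2}\cdot \frac{1+v}{v^{3}}
=\frac{C_m(1+v)\,\tau_n}{v^{3}},
\end{equation}
not $C_m(1+v)\tau_n n^{-1/2}v^{-3}$ as you wrote: the exponents $\frac12-\frac52+2$ sum to $0$, so the $\sqrt n$ you gained from the crude moment bound exactly cancels the $n^{-1/2}$ you need. Since $\tau_n\sqrt n\to\infty$, your argument proves only a bound of order $\tau_n(1+v)v^{-3}$, which is strictly weaker than the stated $C_m(1+v)/(\sqrt n\,v^{3})$; the arithmetic slip in the last line conceals this.

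The paper avoids the loss by never invoking the truncation level for the $X$-moments. It applies H\"older's inequality in the form
\begin{equation}
\E|X_{jk}|^{\alpha}\,\bigl|[\mathbf P^{(a)}]_{kj}\bigr|
\le \E^{\frac{\alpha}{4}}|X_{jk}|^{4}\;\E^{\frac{4-\alpha}{4}}\bigl|[\mathbf P^{(a)}]_{kj}\bigr|^{\frac{4}{4-\alpha}},
\qquad \alpha=1,3,
\end{equation}
so that the $X$-factor is $\le M^{\alpha/4}=O(1)$ by the hypothesis $\E|X_{jk}|^4\le M$ of Theorem \ref{main}, and the matrix-entry factor is handled by the bilinear-form inequality together with Lemma \ref{norm1} for higher moments $2r$ (this is precisely why that lemma is stated for arbitrary $r$: for $\alpha=3$ one needs fourth moments of the vector norms). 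This also resolves the decoupling issue cleanly, since $X_{jk}$ cannot be separated from the bracket by independence (the matrix still contains $\theta_{jk}X_{jk}$). If you replace your step ``$\E|X_{jk}|^3\le\tau_n\sqrt n$ plus Cauchy--Schwarz'' by this H\"older step with exponents $(4/\alpha,\,4/(4-\alpha))$, the rest of your outline goes through and yields the claimed $C_m(1+v)/(\sqrt n\,v^{3})$.
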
 
\begin{proof}By the formula for  derivatives of a resolvent matrix , we have 
\begin{align} 
\frac{\partial (\mathbf H^{\nu}\mathbf J\mathbf R\mathbf H^{\mu})}{\partial X_{jk}}&=\frac1{\sqrt n}\sum_{a=0}^{\nu-1}\mathbf H^{a}(\mathbf e_j\mathbf e_k^T+ 
\mathbf e_{k+n}\mathbf e_{j+n}^T)\mathbf H^{\nu-1-a}\mathbf J\mathbf R\mathbf H^{\mu}\notag\\&-\frac1{\sqrt n}\sum_{b=0}^{m-1} 
\mathbf H^{\nu}\mathbf J\mathbf R\mathbf H^{b}(\mathbf e_j\mathbf e_k^T+ 
\mathbf e_{k+n}\mathbf e_{j+n}^T)\mathbf H^{m-1-b}\mathbf J\mathbf R\mathbf H^{(\mu)}\notag\\& 
+\sum_{c=0}^{\mu-1}\mathbf H^{\nu}\mathbf J\mathbf R\mathbf H^{c}(\mathbf e_j\mathbf e_k^T+ 
\mathbf e_{k+n}\mathbf e_{j+n}^T)\mathbf H^{\mu-1-c}. 
\end{align} 
From this formula it follows that
\begin{align} 
\frac{\partial^2 (\mathbf H^{\nu}\mathbf J\mathbf R\mathbf H^{\mu})}{\partial X_{jk}^2}& 
=\frac1{n}(\sum_{a=0}^{\nu-1}\sum_{i=1}^{4}\mathbf P_{i}^{(a)}+ 
\sum_{c=1}^{\mu-1}\sum_{i=1}^{4}\mathbf T_i^{(c)})-\frac1{n}\sum_{b=1}^{m-1}\sum_{i=1}^6 
\mathbf U_{i}^{(b)}, 
\end{align} 
where 
\begin{align} 
\mathbf P_1^{(a)}=-&\sum_{s=0}^{a-1}\mathbf H^{s}(\mathbf e_j\mathbf e_k^T+ 
\mathbf e_{k+n}\mathbf e_{j+n}^T)\mathbf H^{a-1-s}(\mathbf e_j\mathbf e_k^T+ 
\mathbf e_{k+n}\mathbf e_{j+n}^T)\mathbf H^{\nu-1-a}\mathbf J\mathbf R\mathbf H^{\mu}\notag\\ 
\mathbf P_2^{(a)}=-&\frac1{n}\sum_{s=0}^{\nu-a-1} 
\mathbf H^{a}(\mathbf e_j\mathbf e_k^T+ 
\mathbf e_{k+n}\mathbf e_{j+n}^T)\mathbf H^{s}(\mathbf e_j\mathbf e_k^T+ 
\mathbf e_{k+n}\mathbf e_{j+n}^T)\mathbf H^{\nu-1-a-s}\mathbf J\mathbf R\mathbf H^{\mu}\notag\\ 
\mathbf P_3^{(a)}=-&\frac1{n}\sum_{s=0}^{m-1} 
\mathbf H^{a}(\mathbf e_j\mathbf e_k^T+ 
\mathbf e_{k+n}\mathbf e_{j+n}^T)\mathbf H^{\nu-1-s}\mathbf J\mathbf R\mathbf H^{s}(\mathbf e_j\mathbf e_k^T+ 
\mathbf e_{k+n}\mathbf e_{j+n}^T)\mathbf H^{m-1-s}\mathbf J\mathbf R\mathbf H^{\mu}\notag\\ 
\mathbf P_4^{(a)}=-&\frac1{n}\sum_{s=0}^{\mu-1}\mathbf H^{a}(\mathbf e_j\mathbf e_k^T+ 
\mathbf e_{k+n}\mathbf e_{j+n}^T)\mathbf H^{\nu-1-s}\mathbf J\mathbf R 
\mathbf H^{s}(\mathbf e_j\mathbf e_k^T+ 
\mathbf e_{k+n}\mathbf e_{j+n}^T)\mathbf H^{\mu-1-s}.\notag 
\end{align} 
Furthermore, 
\begin{align} 
\mathbf T_1^{(c)}=&-\frac1{n}\sum_{s=0}^{\nu-1} 
\mathbf H^{s}(\mathbf e_j\mathbf e_k^T+ 
\mathbf e_{k+n}\mathbf e_{j+n}^T)\mathbf H^{\nu-1-s}\mathbf J\mathbf R\mathbf H^{c}(\mathbf e_j\mathbf e_k^T+ 
\mathbf e_{k+n}\mathbf e_{j+n}^T)\mathbf H^{\mu-1-c}\notag\\ 
\mathbf T^{(c)}_2=&-\frac1{n}\sum_{s=0}^{m-1}\mathbf H^{\nu}\mathbf J\mathbf R\mathbf H^{s}(\mathbf e_j\mathbf e_k^T+ 
\mathbf e_{k+n}\mathbf e_{j+n}^T)\mathbf H^{m-1-s}\mathbf J\mathbf R\mathbf H^c(\mathbf e_j\mathbf e_k^T+ 
\mathbf e_{k+n}\mathbf e_{j+n}^T)\mathbf H^{\mu-1-c}, 
\notag\\ 
\mathbf T^{(c)}_3=&-\frac1{n}\sum_{s=0}^{c-1} 
\mathbf H^{\nu}\mathbf J\mathbf R\mathbf H^{s}(\mathbf e_j\mathbf e_k^T+ 
\mathbf e_{k+n}\mathbf e_{j+n}^T)\mathbf H^{c-1-s}(\mathbf e_j\mathbf e_k^T+ 
\mathbf e_{k+n}\mathbf e_{j+n}^T)\mathbf H^{\mu-1-c}\notag\\ 
\mathbf T^{(c)}_4=&-\sum_{s=1}^{\mu-2-c}\mathbf H^{\nu}\mathbf J\mathbf R\mathbf H^c(\mathbf e_j\mathbf e_k^T+ 
\mathbf e_{k+n}\mathbf e_{j+n}^T)\mathbf H^s(\mathbf e_j\mathbf e_k^T+ 
\mathbf e_{k+n}\mathbf e_{j+n}^T)\mathbf H^{\mu-2-c-s}. 
\end{align} 
Finally, 
\begin{align} 
 \mathbf U^{(b)}_1&=- \sum_{s=0}^{\nu-1}\mathbf H^s(\mathbf e_j\mathbf e_k^T+ 
\mathbf e_{k+n}\mathbf e_{j+n}^T)\mathbf H^{\nu-1-s}\mathbf J\mathbf R\mathbf H^b(\mathbf e_j\mathbf e_k^T+ 
\mathbf e_{k+n}\mathbf e_{j+n}^T)\mathbf H^{m-1-b}\mathbf J\mathbf R\mathbf H^{\mu}\notag\\ 
\mathbf U^{(b)}_2&=- \sum_{s=0}^{m-1}\mathbf H^{\nu}\mathbf J\mathbf R\mathbf H^{s}(\mathbf e_j\mathbf e_k^T+ 
\mathbf e_{k+n}\mathbf e_{j+n}^T)\mathbf H^{m-1-s}\mathbf J\mathbf R\mathbf H^b(\mathbf e_j\mathbf e_k^T+ 
\mathbf e_{k+n}\mathbf e_{j+n}^T)\mathbf H^{m-1-b}\mathbf J\mathbf R\mathbf H^{\mu}\notag\\ 
\mathbf U^{(b)}_3&=- \sum_{s=0}^{b-1}\mathbf H^{\nu}\mathbf J\mathbf R\mathbf H^{s}(\mathbf e_j\mathbf e_k^T+ 
\mathbf e_{k+n}\mathbf e_{j+n}^T)\mathbf H^{b-1-s}(\mathbf e_j\mathbf e_k^T+ 
\mathbf e_{k+n}\mathbf e_{j+n}^T) 
\mathbf H^{m-1-b}\mathbf J\mathbf R\mathbf H^{\mu}\notag\\ 
\mathbf U^{(b)}_4&=- \sum_{s=0}^{b-1}\mathbf H^{\nu}\mathbf J\mathbf R\mathbf H^{b}(\mathbf e_j\mathbf e_k^T+ 
\mathbf e_{k+n}\mathbf e_{j+n}^T)\mathbf H^{s}(\mathbf e_j\mathbf e_k^T+ 
\mathbf e_{k+n}\mathbf e_{j+n}^T) 
\mathbf H^{m-2-b}\mathbf J\mathbf R\mathbf H^{\mu}\notag\\ 
\mathbf U^{(b)}_5&=- \sum_{s=0}^{m-1}\mathbf H^{\nu}\mathbf J\mathbf R\mathbf H^{b}(\mathbf e_j\mathbf e_k^T+ 
\mathbf e_{k+n}\mathbf e_{j+n}^T)\mathbf H^{m-1-b}\mathbf J\mathbf R\mathbf H^s(\mathbf e_j\mathbf e_k^T+ 
\mathbf e_{k+n}\mathbf e_{j+n}^T) 
\mathbf H^{m-1-s}\mathbf J\mathbf R\mathbf H^{\mu}\notag\\ 
\mathbf U^{(b)}_6&=- \sum_{s=0}^{\mu-1}\mathbf H^{\nu}\mathbf J\mathbf R\mathbf H^{b}(\mathbf e_j\mathbf e_k^T+\mathbf e_{k+n}\mathbf e_{j+n}^T)\mathbf H^{m-1-b}\mathbf J\mathbf R\mathbf H^s(\mathbf e_j\mathbf e_k^T+ 
\mathbf e_{k+n}\mathbf e_{j+n}^T) 
\mathbf H^{\mu-1-s}.\notag 
\end{align} 
 
Note that for any matrices $\mathbf A$ and $\mathbf B$ we have 
\begin{equation}\label{in1} 
 |[\mathbf A\mathbf B]_{jk}|\le \|\mathbf e_k^T\mathbf A\|_2\|\mathbf A\mathbf e_j\|_2. 
\end{equation} 
Applying H\"older's  inequality, we get, for $\alpha=1$ or $\alpha=3$ 
\begin{equation} 
 \E|X_{jk}|^{\alpha}|[\mathbf P^{(a)}]_{kj}|\le \E^{\frac{\alpha}4}|X_{jk}|^4\E^{\frac{4-\alpha}4}|[\mathbf P^{(a)}]_{kj}|^{\frac4{4-\alpha}}. 
\end{equation} 
We may use now 
 inequality (\ref{in1}) and Lemma \ref{norm1} to obtain the bound, 
for $\alpha=1$ or $\alpha=3$ 
\begin{equation}\label{in2} 
 \frac1{n^{\frac52}}\sum_{j,k=1}^n\E|X_{jk}|^{\alpha}|[\mathbf P^{(a)}_l]_{kj}|\le \frac {C(1+v^2)}{\sqrt n v^3}. 
\end{equation} 
Similar we get 
\begin{equation}\label{in3} 
 \frac1{n^{\frac52}}\sum_{j,k=1}^n\E|X_{jk}|^3|[\mathbf T^{(c)}_l]_{kj}|\le \frac {C(1+v)}{\sqrt n v^3} 
\end{equation} 
and 
\begin{equation}\label{in4} 
 \frac1{n^{\frac52}}\sum_{j,k=1}^n\E|X_{jk}|^3|[\mathbf U^{(b)}_l]_{kj}|\le \frac {C(1+v)}{\sqrt n v^3} 
\end{equation} 
 Inequalities (\ref{in2})--(\ref{in4}) together conclude the proof of the Lemma. 
\end{proof}

\begin{lem}\label{teilor} 
Under conditions of Theorem \ref{main} we have, for $\mu, \nu\ge 0$ and 
for any positive $\eta>0$ that there exists a constant  $C(\mu, \nu,\eta)$ 
depending on $\mu,\nu,\eta$ such that 
\begin{equation} 
\sum_{j,k=1}^n\E X_{jk}[\mathbf H^{\nu}\mathbf J\mathbf R\mathbf H^{\mu}]_{kl} 
=\sum_{j,k=1}^n\E[\frac{\partial (\mathbf H^{\nu}\mathbf J\mathbf R\mathbf H^{\mu})}{\partial X_{jk}}]_{kl}+\frac{C\theta(1+v}{\sqrt nv^3}, 
\end{equation} 
where $\theta$ denotes a function that $|\theta|\le 1$. 
\end{lem}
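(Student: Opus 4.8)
The plan is to prove the identity term by term in $(j,k)$, via a Taylor expansion in the single entry $X_{jk}$ with all other entries frozen. Fix $j,k$ and write $\mathbf G:=\mathbf H^{\nu}\mathbf J\mathbf R\mathbf H^{\mu}$, regarded as a function of the scalar $X_{jk}$ with the remaining entries of $\mathbf X$ held fixed; since $\mathbf V=\mathbf H^{m}\mathbf J$ is Hermitian, $\mathbf R=(\mathbf V-z\mathbf I)^{-1}$, and hence $\mathbf G$, depends smoothly (indeed rationally) on $X_{jk}$ as long as $v>0$, with derivatives controlled by $\|\mathbf R\|\le v^{-1}$ and powers of $\mathbf H$ through the formulas already used in $(\ref{auxiliary41})$ and in the proof of Lemma $\ref{derivatives}$. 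Denoting by $\mathbf G(0)$ the matrix $\mathbf G$ with the entry $X_{jk}$ (and the conjugate entry in the lower block of $\mathbf H$) set to zero, and by primes the derivative $\partial/\partial X_{jk}$, Taylor's formula with integral remainder gives
\begin{align*}
[\mathbf G(X_{jk})]_{kl}&=[\mathbf G(0)]_{kl}+X_{jk}[\mathbf G'(0)]_{kl}+X_{jk}^2\int_0^1(1-t)[\mathbf G''(tX_{jk})]_{kl}\,dt,\\
[\mathbf G'(X_{jk})]_{kl}&=[\mathbf G'(0)]_{kl}+X_{jk}\int_0^1[\mathbf G''(tX_{jk})]_{kl}\,dt .
\end{align*}
Reading the two integrals as averages over an extra variable $\theta_{jk}$ uniform on $[0,1]$ and independent of everything puts the remainder precisely in the shape to which Lemma $\ref{derivatives}$ applies; here $\mathbf G''(tX_{jk})$ means, as in Lemma $\ref{derivatives}$, the matrix $\partial^2\mathbf G/\partial X_{jk}^2$ with its $X_{jk}$-entries replaced by $tX_{jk}$.

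Next I would multiply the first line by $X_{jk}$, take the full expectation, and use that $X_{jk}$ is independent of $\mathbf G(0)$ and of $\mathbf G'(0)$ (both functions of the frozen entries only), which lets the two leading terms factor as $\E X_{jk}\cdot\E[\mathbf G(0)]_{kl}$ and $\E|X_{jk}|^{2}\cdot\E[\mathbf G'(0)]_{kl}$. Subtracting $\E[\mathbf G'(X_{jk})]_{kl}$ and inserting $\E|X_{jk}|^{2}=1+(\E|X_{jk}|^{2}-1)$ gives
$$
\E X_{jk}[\mathbf G(X_{jk})]_{kl}-\E[\mathbf G'(X_{jk})]_{kl}
=\E X_{jk}\cdot\E[\mathbf G(0)]_{kl}+(\E|X_{jk}|^{2}-1)\,\E[\mathbf G'(0)]_{kl}
+\E\big[(X_{jk}^{3}(1-\theta_{jk})-X_{jk})[\mathbf G''(\theta_{jk}X_{jk})]_{kl}\big].
$$
The first two terms are already negligible: by $(\ref{as1})$ one has $|\E X_{jk}|\le n^{-3/2}\tau_n^{-1}$ and $|\,\E|X_{jk}|^{2}-1\,|\le Mn^{-2}\tau_n^{-2}$, while $|\E[\mathbf G(0)]_{kl}|$ and $|\E[\mathbf G'(0)]_{kl}|$ are at most $C(\mu,\nu)/v$ by the bound $|[\mathbf A\mathbf R\mathbf B]_{kl}|\le v^{-1}\|\mathbf e_k^{T}\mathbf A\|_{2}\|\mathbf B\mathbf e_l\|_{2}$ (as in $(\ref{in1})$) together with the moment estimates of Lemma $\ref{norm1}$; summed over the $n^{2}$ pairs these stay within the asserted error. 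For the last term, $0\le 1-\theta_{jk}\le1$ gives $|X_{jk}^{3}(1-\theta_{jk})-X_{jk}|\le|X_{jk}|+|X_{jk}|^{3}$, so its sum over $j,k$ is bounded in modulus by $\sum_{j,k}\E\big(|X_{jk}|+|X_{jk}|^{3}\big)\big|[\partial^{2}(\mathbf H^{\nu}\mathbf J\mathbf R\mathbf H^{\mu})/\partial X_{jk}^{2}(\theta_{jk}X_{jk})]_{kl}\big|$, which is exactly the quantity estimated in Lemma $\ref{derivatives}$ — the argument there is insensitive to whether the free column index is $j$ or a general $l$, and to passing from $X_{jk}+X_{jk}^{3}$ to $|X_{jk}|+|X_{jk}|^{3}$. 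This yields the bound $C(\mu,\nu)(1+v)/(\sqrt n\,v^{3})$ and finishes the proof; the auxiliary $\eta$ in the statement plays no role and may be absorbed into the constant.

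The one genuinely delicate point is the bookkeeping that matches the Taylor remainder to Lemma $\ref{derivatives}$: the weight $|X_{jk}|+|X_{jk}|^{3}$ is the combination of the $X_{jk}^{3}$-remainder coming from $X_{jk}\mathbf G$ and the $X_{jk}$-remainder coming from $\mathbf G'$, and the substitution $X_{jk}\mapsto\theta_{jk}X_{jk}$ with $\theta_{jk}$ uniform is what turns the integral remainder into an expectation on which the H\"older-inequality and Lemma $\ref{norm1}$ machinery of Lemma $\ref{derivatives}$ can be run. Once that matching is in place, everything else — the factorization by independence and the crude estimates of the two lower-order correction terms, which use only the smallness of $\E X_{jk}$ and of $\E|X_{jk}|^{2}-1$ produced by the centering and truncation in $(\ref{as1})$ — is routine.
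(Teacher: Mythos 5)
Your argument is correct and is essentially the paper's own proof: both expand in the single entry $X_{jk}$ by Taylor's formula (the paper's identity $\E\xi f(\xi)=\E f'(\xi)-\E\xi f''(\theta\xi)+\tfrac12\E\xi^3 f''(\theta\xi)$ is precisely your two expansions of $f$ and $f'$ combined), and both bound the resulting remainder, weighted by $|X_{jk}|+|X_{jk}|^3$ and evaluated at $\theta_{jk}X_{jk}$, by an appeal to Lemma \ref{derivatives}. The only deviation is that you also carry the correction terms from $\E X_{jk}\neq 0$ and $\E|X_{jk}|^2\neq 1$ allowed by (\ref{as1}), which the paper suppresses by working under the exact centering of (\ref{conditions}); this is a refinement of the same route, not a different one.
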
 
\begin{proof}Let $\xi$ be random variable with $\E\xi=0$, $E\xi^2=1$ 
and let $f(x)$ denote a function which satisfies the following condition 
$\E|\xi|^3|f"(\theta\xi)|\le \varkappa$. 
Here  $\theta$ denotes a uniformly  distributed random variable  on $[0,1]$.
 By Tailor's formula we have 
\begin{equation} 
\E\xi f(\xi)=\E f'(\xi)-\E\xi f''(\theta\xi)+\frac12\E\xi^3 f''(\theta\xi), 
\end{equation} 
where $\theta$ denotes a uniformly  distributed random variable independent of $\xi$. 
Applying this formula twice and H\"older's  inequality, we get 
\begin{align} 
|\sum_{j,k=1}^n\E X_{jk}[\mathbf H^{\nu}\mathbf J\mathbf R\mathbf H^{\mu}]_{kj}-\sum_{j,k=1}^n\E[\frac{\partial (\mathbf H^{\nu}\mathbf J\mathbf R\mathbf H^{\mu})} 
{\partial X_{jk}}]_{kj}|\notag\\ \le n^{-\frac32}\sum_{j,k=1}^n\E(|X_{jk}|+|X_{jk}|^3)\left|\left[\frac{\partial ^2(\mathbf H^{\nu}\mathbf J\mathbf R\mathbf H^{\mu})} 
{\partial X_{jk}^2}(\theta_{jk}X_{jk})\right]_{kj}\right| 
\end{align} 
Applying now  the result of Lemma \ref{derivatives}, we conclude the proof of Lemma. 
\end{proof} 
 
 
\end{document}